\newcommand{\dvec}{\mathbf{d}}
\newcommand{\vvec}{\mathbf{v}}
\newcommand{\uvec}{\mathbf{u}}
\newcommand{\wvec}{\mathbf{w}}
\newcommand{\hvec}{\mathbf{h}}
\newcommand{\gvec}{\mathbf{g}}
\newcommand{\zetavec}{\boldsymbol\zeta}
\newcommand{\V}{\textbf{V}}
\newcommand{\Gdiv}{\textbf{G}_{div}}
\newcommand{\Vdiv}{\textbf{V}_{div}}
\newcommand{\tn}[1]{\mbox {\F #1}}
\font\F=msbm10   
\newtheorem{lem}{Lemma}
\newtheorem{thm}{Theorem}
\newtheorem{defn}{Definition}
\newtheorem{prop}{Proposition}
\newtheorem{cor}{Corollary}
\newtheorem{oss}{Remark}
\begin{document}

\title{\large\textbf{TRAJECTORY ATTRACTORS FOR THE SUN-LIU
MODEL FOR NEMATIC LIQUID CRYSTALS IN 3D}}

\author{Sergio Frigeri and Elisabetta Rocca\thanks{The authors
are  supported by the FP7-IDEAS-ERC-StG
Grant \#256872 (EntroPhase)}\\
Dipartimento di Matematica, Universit\`{a} degli Studi di Milano\\
Via Saldini 50, 20133 Milano, Italy
\\
sergio.frigeri@unimi.it and
elisabetta.rocca@unimi.it}
\date{}
\maketitle

\begin{abstract}
\noindent In this  paper we prove the existence of a trajectory
attractor (in the sense of V.V. Chepyzhov and M.I. Vishik) for a
nonlinear PDE system coming from a 3D liquid crystal model
accounting for stretching effects. The system couples a nonlinear
evolution equation for the director $\dvec$ (introduced in order
to the describe the preferred orientation of the molecules) with
an incompressible Navier-Stokes equation for the evolution of the
velocity field $\uvec$. The technique is based on the introduction
of a suitable trajectories space and of a metric accounting for
the double-well type nonlinearity contained in the director
equation. Finally, a dissipative estimate is obtained by using a
proper integrated energy inequality.
Both the cases
of (homogeneous) Neumann and (non-homogeneous) Dirichlet boundary conditions for $\dvec$
are considered.
\end{abstract}

\tableofcontents

\section{Introduction}
\label{intro}

In this paper we prove the existence of a trajectory attractor for
the following PDE system
\begin{align}
&\uvec_t+\mbox{div}(\uvec\otimes\uvec)+\nabla p =\mbox{div}(\nu(\nabla\uvec+\nabla^T\uvec))-\mbox{div}(\nabla\dvec\odot\nabla\dvec)\nonumber\\
&-\mbox{div}(\alpha(\Delta\dvec-\nabla_{\dvec}W(\dvec))\otimes\dvec-(1-\alpha)\dvec\otimes(\Delta\dvec-\nabla_{\dvec}W(\dvec))+\hvec,\label{sy1}\\
&\dvec_t+\uvec\cdot\nabla\dvec-\alpha\dvec\cdot\nabla\uvec+(1-\alpha)\dvec\cdot\nabla^T\uvec=(\Delta\dvec-\nabla_{\dvec}W(\dvec)),\label{sy2}\\
&\mbox{div}(\uvec)=0,\label{sy3}
\end{align}
in $\Omega\times(0,T)$, where $\Omega$ is a bounded smooth domain in $\mathbb{R}^3$.

The first equation is a momentum balance ruling the evolution of the velocity field $\uvec$ ($p$ denotes the pressure of the system
and $\hvec$ is an external body force),
relation \eqref{sy3} represents the
incompressibility constraint, while \eqref{sy2} describes the dynamics of the director field $\dvec$, which represents here a vector
pointing in the preferred direction from the molecules at a neighborhood of any point of our domain.  The nonlinear function $W$ stands
for a relaxation of a constraint that should be imposed on the unitary vector $\dvec$, whose modulus should be equal to 1. In order to
relax this non-convex constraint, we introduce the double well potential $W$, which is a regular potential with some coercivity
properties (cf. next Section~\ref{preli} for the precise assumptions on $W$). For example the classical double well potential
$W(\dvec)=(|\dvec|^2-1)^2$ is included in our analysis, but also a more general  growth is admitted.
The constant $\nu$ is a positive viscosity coefficient, $\alpha \in [0,1]$ is a parameter related to the shape of the
liquid crystal molecules. For instance, the spherical, rod-like and disc-like liquid crystal molecules correspond to the cases
$\alpha= \frac{1}{2}, \, 1$ and $0$, respectively.

Concerning the notation, $\nabla_{\dvec}$ represents the gradient with respect to the variable $\dvec$.
$\nabla \dvec\odot \nabla \dvec$ denotes the $3\times 3$ matrix whose $(i,j)$-th entry is given by
$\nabla_i \dvec \cdot \nabla_j \dvec$, for $i\leq i,j\leq 3$, and $\otimes$ stands for the usual Kronecker product, i.e.,
$(\uvec\otimes\uvec)_{ij}:=\uvec_i\uvec_j$, for $i,j=1,2,3$. Finally, $\nabla^T$ indicates the transpose of the gradient.

Equations \eqref{sy1}--\eqref{sy3} come from a model introduced in \cite{SL} in order to describe the evolution of
liquid crystal substance under constant temperature (i.e. in the isothermal case). This system is obtained as a correction to
a simplification of the celebrated Leslie-Ericksen model (cf. the pioneering papers \cite{Er,Le78}) proposed in \cite{LinLiusimply}.
In \cite{LinLiusimply} the authors proposed a model in which the stretching terms
$\alpha\dvec\cdot\nabla\uvec+(1-\alpha)\dvec\cdot\nabla^T\uvec$ simply were neglected.
Coutand and Shkol\-ler \cite{CoutShkoller} proved a local well-posedness result for a model where the stretching term in equation \eqref{sy2}
was present, but, exactly due to the presence of
the stretching term, the total energy balance does not hold in that case.  To overcome such an inconvenience,
Sun and Liu proposed in \cite{SL} a variant of the Lin and Liu model \cite{LinLiusimply} in which not only the
stretching term is included in the system, but also a suitable new component is added  to the stress tensor.
Hence, the stress tensor $\tn{T}$ results as the sum of the standard stress tensor $\tn{S}= \nu(\nabla\uvec+\nabla^T\uvec)$ and
a new stretching term, i.e.
$$
\tn{T} = \tn{S}-\nabla\dvec\odot\nabla\dvec-\alpha(\Delta\dvec-\nabla_{\dvec}W(\dvec))\otimes\dvec
+(1-\alpha)\dvec\otimes(\Delta\dvec-\nabla_{\dvec}W(\dvec)\,.
$$
The resulting model \eqref{sy1}--\eqref{sy3} has been subsequently analyzed both from the point of view of
existence of strong solutions and also of their long-time behavior  in the paper \cite{wuxuliu},
where (as in \cite{SL}) the authors explicitly manifest the impossibility of proving the existence
of solutions for a standard weak formulation of the problem due to the nonlinearity of the stretching term
and of lack of maximum principle for equation \eqref{sy2}, and so of an $L^\infty$-estimate for $\dvec$.

In \cite{CR}, properly choosing the space of the test functions in the
weak momentum equation, the existence of well-defined weak solutions for the system  \eqref{sy1}--\eqref{sy3} is rigorously derived
and an integrated energy inequality is obtained. It's worth noting that the uniqueness of such solutions in the 3D case,
but also the proof of regularizing effects even in the 2D case, are not known yet, while the existence of weak solutions for
the corresponding non-isothermal system has been recently proved in \cite{FFRS}.

This results contained in the paper \cite{CR} are our starting point in order to perform the analysis on the long-time behavior of solutions.
As in \cite{CR} we consider both Neumann boundary conditions for $\dvec$ (cf., e.g.,
 \cite{LS} where it is pointed out that the Neumann boundary conditions for $\dvec$ are also suitable
for the implementation of a numerical scheme) and non-homogeneous Dirichlet ones, while for $\uvec$ we take into account only homogeneous
Dirichlet boundary conditions.
For the resulting Cauchy boundary value problem we prove
the existence of a trajectory attractor in the sense of V.V. Chepyzhov and M.I. Vishik (see \cite{CVbook, CV}).

We point out that, due to the lack of uniqueness of solutions, the choice of the notion of attractor
is essential. Indeed, there are two main approaches when one deals with dissipative systems without uniqueness
(see also \cite{CU} for a nonstandard analysis method).
The first one is based on the theory of global attractors for semigroups of multi-valued maps
(see \cite{CMR,M,MV} and also, for 3D incompressible Navier-Stokes, \cite{Ba,CF,KV,R} and references therein).
The second more geometric approach consists in working in a phase space made of trajectories
with the translation semigroup acting on them. Since the translation semigroup
is single-valued, one can then rely on the results from the classical theory of attractors
(see \cite{CV,CVbook} and also \cite{FS} and \cite{Se}).
In this paper we apply the second approach which seems more effective when the external forces are time dependent.

We essentially prove two types of results. The first one leads to a ``weaker'' definition of trajectory attractor,
but it holds true for quite general potentials $W$. The second one leads to the standard definition of trajectory attractor
in the sense of V.V. Chepyzhov and M.I. Vishik, but it holds true only for polynomially fast growing potentials $W$. In the first case,  in order to prove
the existence of the trajectory attractor under quite general assumptions on the potential $W$ (a $C^2$ function which is the sum of a convex
and ``coercive'' part and of a possibly non-convex part with Lipschitz continuous derivative), we generalize the result \cite[Thm.3.1]{CV} showing
that it is not necessary to prove the closure of the space of trajectories in the local topology
in order to obtain
the existence of the trajectory attractor.

The closure property only better characterizes the
trajectory attractor. Moreover, we subsequently prove it under more restrictive assumptions on the potential, which however are still
satisfied by the classical double-well potential $W(\dvec)=(|\dvec|^2-1)^2$. In the second case, instead, the trajectory space is defined in order to take
into account of the polynomial growth assumed on the potential $W$. In this case, we can immediately prove  the closure of
the trajectory space, leading to the standard definition of trajectory attractor in the sense of V.V. Chepyzhov and M.I. Vishik,
without any adjoint request on $W$. Let us notice that in
both cases the metric introduced on the subset of the trajectory space (suggested by the energy estimate) explicitly depends on the
potential $W$. This turns out to be meaningful in nonlinear models (cf., e.g., \cite{RS} where the phase space was explicitly depending
on the nonlinearities of the problem too).

Regarding other contributions in the literature on the long-time behavior of solutions for this system
accounting for stretching terms,
we can quote two recent
papers: \cite{GrasWu}, where the authors prove the existence of a finite-dimensional global attractor
in the 2D case and \cite{prslong} in which the authors prove - via \L ojasiewicz-Simon techniques -
the convergence of the trajectories to the stationary states under suitable conditions on the data, which are different in the
2D and 3D cases.

\paragraph{Plan of the paper.}
We split the rest of the paper in three  parts: in  Sections~\ref{sec:neu} and \ref{sec:dir} we prove, respectively, the existence of  the
trajectory attractor for \eqref{sy1}--\eqref{sy3}
 in the case of homogeneous Neumann
 and non-homogeneous Dirichlet boundary conditions for $\dvec$, finally in  the last Section~\ref{furprop} some further properties
of the trajectory attractor are studied. More specifically, in Subsection~\ref{preli}, we shall introduce some notation
and recall the main results concerning system \eqref{sy1}--\eqref{sy3},
which are proved in \cite{CR} and regarding the general theory of trajectory attractors introduced in \cite{CVbook}.
 Subsections~\ref{esitra1}, \ref{esitra2} and Section \ref{sec:dir}
 are devoted to the main results of the paper (Theorems \ref{main},
\ref{main2} and \ref{mainDir}), where the existence of the
trajectory attractor for \eqref{sy1}--\eqref{sy3}
is established under
different functional settings, assumptions on the
potential and boundary conditions for $\dvec$
(homogeneous Neumann or non-homogeneous Dirichlet).

\color{black}

\section{The case of homogeneous Neumann boundary conditions for $\dvec$}
\label{sec:neu}

In this section we deal with a suitable weak formulation of the PDE system (\ref{sy1}--\ref{sy3})
coupled with Neumann homogeneous boundary conditions for $\dvec$ and Dirichlet homogeneous ones for $\uvec$.
First, in Subsection~\ref{preli},  we introduce some notation and preliminary results which we recall for reader's convenience, then,
in Subsections~\ref{esitra1} and \ref{esitra2}, we
state and prove our main results: the existence of the trajectory attractor under two different assumptions
on the potential $W$ in \eqref{sy2}.

\subsection{Notation and preliminaries}
\label{preli}

Let us introduce the classical Hilbert spaces for the
Navier-Stokes equation
$$\Gdiv:=[\{\uvec\in C_0^{\infty}(\Omega)^3:
\mbox{div}(\uvec)=0\}]_{L^2(\Omega)^3},
$$
and
$$\Vdiv:=\{\uvec\in H_0^1(\Omega)^3:
\mbox{div}(\uvec)=0\}.
$$
We denote by $(\cdot,\cdot)$ and $\Vert\cdot\Vert$ the scalar
product and the norm, respectively, both in $L^2(\Omega)$ and in
$L^2(\Omega)^3$. We also set $\V:=H^1(\Omega)^3$ and the duality
between a Banach space $X$ and its dual $X'$ will be denoted by
$\langle\cdot,\cdot\rangle$. The space $\Vdiv$ is endowed with the
scalar product
$$(\uvec,\vvec)_{\Vdiv}:=(\nabla\uvec,\nabla\vvec),\qquad\forall\uvec,\vvec\in\Vdiv.$$

We shall also use the first eigenvalue of the Stokes operator $A$ with no-slip
boundary condition. Recall that $A:D(A)\subset \Gdiv\to\Gdiv$ is defined as
$A:=-P\Delta$  with domain $D(A)=H^2(\Omega)^3\cap\Vdiv$,
where $P:L^2(\Omega)^3\to\Gdiv$ is the Leray projector. Notice that we have
$(A\uvec,\vvec)=(\uvec,\vvec)_{\Vdiv}$, for all $\uvec\in D(A)$ and
for all $\vvec\in\Vdiv$,
and that $A^{-1}:\Gdiv\to\Gdiv$ is a self-adjoint compact operator in $\Gdiv$.
Thus, according with classical spectral theorems,
it possesses a sequence $\{\lambda_j\}$ with $0<\lambda_1\leq\lambda_2\leq\cdots$ and $\lambda_j\to\infty$,
and a family $\{\wvec_j\}\subset D(A)$ of eigenfunctions which is orthonormal in $\Gdiv$.

Let $X$ be a Banach space and $1\leq p<\infty$. We shall denote by $L^p_{tb}(0,\infty;X)$
the space of translation bounded functions in $L^p_{loc}([0,\infty);X)$. We recall that
$f\in L^p_{tb}(0,\infty;X)$ iff
$$\Vert f\Vert_{L^p_{tb}([0,\infty);X)}^p:=\sup_{t\geq 0}\int_t^{t+1}\Vert f(\tau)\Vert_X^p d\tau<\infty.$$
 Furthermore, $L^p_{loc,w}([0,\infty);X)$ will stand for the space of functions
in $L^p_{loc}([0,\infty);X)$ endowed with the local weak convergence topology, i.e., a sequence
$\{f_n\}$ converges to $f$ in $L^p_{loc,w}([0,\infty);X)$ iff
$f_n\rightharpoonup f$ weakly in $L^p(0,M;X)$, for every $M>0$.

We are ready now to recall from \cite{CR} the weak formulation of the PDE system \eqref{sy1}--\eqref{sy3} which we complement
 with the following
boundary and initial conditions
\begin{align}
&\uvec=0,\qquad\mbox{on }\Gamma\times(0,T),\label{sy4}\\
&\partial_{\boldsymbol n}\dvec=0,\qquad\mbox{on }\Gamma\times(0,T),\label{sy5}\\
&\uvec(0)=\uvec_0,\qquad\dvec(0)=\dvec_0,\qquad\mbox{in
}\Omega.\label{sy6}
\end{align}
\begin{defn}
\label{wsdefn}
A couple $\wvec=[\uvec,\dvec]$ is a weak solution to system
\eqref{sy1}--\eqref{sy5} corresponding to the initial data
$\uvec_0$, $\dvec_0$ if $\uvec$, $\dvec$ are such that
\begin{align}
&\uvec\in L_{loc}^{\infty}([0,\infty);\Gdiv)\cap L_{loc}^2([0,\infty);\Vdiv),\label{reg1}\\
&\uvec_t\in L_{loc}^2([0,\infty);W^{-1,3/2}(\Omega)^3),\label{reg2}\\
&\dvec\in L_{loc}^{\infty}([0,\infty);\V)\cap
L_{loc}^2([0,\infty);H^2(\Omega)^3),
\qquad W(\dvec)\in L_{loc}^{\infty}([0,\infty);L^1(\Omega)),\label{reg3}\\
&\dvec_t\in L_{loc}^2([0,\infty);L^{3/2}(\Omega)^3),\label{reg4}
\end{align}
$\uvec$, $\dvec$ satisfy the boundary and initial conditions
\eqref{sy5}, \eqref{sy6}, the equation \eqref{sy2} is satisfied
a.e. in $\Omega\times(0,T)$ and we have
\begin{align}
&\langle\uvec_t,\varphi\rangle-\int_{\Omega}\uvec\otimes\uvec:\nabla\varphi
+\int_{\Omega}\nu(\nabla\uvec+\nabla^T\uvec):\nabla\varphi\nonumber\\
&=\int_{\Omega}(\nabla\dvec\odot\nabla\dvec):\nabla\varphi+\alpha\int_{\Omega}(\Delta\dvec-\nabla_{\dvec}W(\dvec))\otimes\dvec:\nabla\varphi\nonumber\\
&-(1-\alpha)\int_{\Omega}\dvec\otimes(\Delta\dvec-\nabla_{\dvec}W(\dvec)):\nabla\varphi+\langle\hvec,\varphi\rangle,
\label{varform}
\end{align}
for a.e. $t>0$ and for every $\varphi\in W^{1,3}_0(\Omega)^3$ with
\upshape{div}$(\varphi)=0$.
\end{defn}

In \cite{CR} the existence of a global in time weak solution is
proved under the following assumptions on the potential $W$
\begin{align}
&W\in C^2(\mathbb{R}^3),\qquad W\geq 0,\label{Wass1}\\
&W=W_1+W_2,\quad\mbox{with } W_1\mbox{ convex and }W_2\in
C^{1,1}(\mathbb{R}^3),\label{Wass2}
\end{align}
and on the external force $\hvec$
\begin{align}
&\hvec\in L^2_{loc}([0,\infty);\Vdiv').\label{hass}
\end{align}
Namely, from \cite{CR} we recall the following
\begin{thm}
\label{existence}
Suppose that \eqref{Wass1}--\eqref{hass} are satisfied and let the
initial data be such that
\begin{align}
&\uvec_0\in\Vdiv,\qquad\dvec_0\in\V,\qquad W(\dvec_0)\in
L^1(\Omega).
\end{align}
Then, problem \eqref{sy1}--\eqref{sy3}, \eqref{sy4}--\eqref{sy6} admits a global in time
weak solution $\wvec:=[\uvec,\dvec]$ on $[0,\infty)$ corresponding
to $\uvec_0$, $\dvec_0$ and satisfying the following energy
inequality
\begin{align}
&\mathcal{E}(\wvec(t))+\int_s^t\Big(\Vert-\Delta\dvec+\nabla_{\dvec}W(\dvec)\Vert^2
+\nu\Vert\nabla\uvec\Vert^2\Big)d\tau\leq
\mathcal{E}(\wvec(s))+\int_s^t \langle\hvec,\uvec\rangle d\tau,
\label{eist}
\end{align}
for all $t\geq s$, for a.e. $s\in (0,\infty)$, including $s=0$,
where
\begin{equation}\label{en}
\mathcal{E}(\wvec):=\frac{1}{2}\Vert\uvec\Vert^2+\frac{1}{2}\Vert\nabla\dvec\Vert^2+\int_{\Omega}W(\dvec),
\qquad\wvec=[\uvec,\dvec].
\end{equation}

\end{thm}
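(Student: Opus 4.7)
The plan is to construct weak solutions by a Faedo--Galerkin scheme based on the Stokes eigenfunctions $\{\wvec_j\}$ for $\uvec$ and on the eigenfunctions of the Neumann Laplacian (acting componentwise) for $\dvec$. On any finite-dimensional truncation the resulting ODE system is locally well-posed by Cauchy--Lipschitz, thanks to $W\in C^2(\mathbb{R}^3)$ and the fact that $\nabla W_2$ is globally Lipschitz while $\nabla W_1$ is locally Lipschitz as the gradient of a $C^2$ convex function. This produces approximate solutions $\wvec^n=[\uvec^n,\dvec^n]$ on a maximal interval, to be extended to $[0,\infty)$ by the energy estimate below.

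The core of the argument is the formal energy identity, rigorous at the Galerkin level: test \eqref{sy1} with $\uvec^n$ and \eqref{sy2} with $\gvec^n:=-\Delta\dvec^n+\nabla_\dvec W(\dvec^n)$, integrate by parts using \eqref{sy3}--\eqref{sy5}, and add. The advection term in \eqref{sy2}, paired with $-\Delta\dvec^n$, reproduces $\int\nabla\dvec^n\odot\nabla\dvec^n:\nabla\uvec^n$ up to $\mathrm{div}(\uvec^n)=0$ and the Neumann condition, cancelling the corresponding stress contribution in the momentum equation; the two blocks of stretching terms annihilate each other by an index juggling that is the very design principle of the Sun--Liu stress $\tn{T}$ recalled in the introduction. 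What remains is
\begin{equation*}
\frac{d}{dt}\mathcal{E}(\wvec^n) + \nu\Vert\nabla\uvec^n\Vert^2 + \Vert\gvec^n\Vert^2 = \langle\hvec,\uvec^n\rangle,
\end{equation*}
from which, together with $\hvec\in L^2_{loc}([0,\infty);\Vdiv')$ and a Gronwall argument, I would deduce global existence of $\wvec^n$ and the uniform bounds encoded in \eqref{reg1}, \eqref{reg3}, as well as $\gvec^n\in L^2_{loc}([0,\infty);L^2(\Omega)^3)$.

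To split this into separate bounds on $\Delta\dvec^n$ and $\nabla_\dvec W(\dvec^n)$, I would perform an elliptic bootstrap: write $-\Delta\dvec^n+\nabla W_1(\dvec^n)=\gvec^n-\nabla W_2(\dvec^n)$, test with $-\Delta\dvec^n$, use monotonicity of $\nabla W_1$ to secure a favourable sign for the cross term after integration by parts (Neumann data are what make the boundary contribution vanish), and control $\nabla W_2(\dvec^n)$ via its Lipschitz character and the $L^\infty(L^2)$--bound on $\dvec^n$. This yields $\dvec^n\in L^2_{loc}(H^2(\Omega)^3)$ and hence $\nabla_\dvec W(\dvec^n)\in L^2_{loc}(L^2)$. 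The time-derivative regularities \eqref{reg2}, \eqref{reg4} then follow from the equations together with the 3D embeddings $\Vdiv,\V\hookrightarrow L^6$; in particular, the restricted test space $W^{1,3}_0(\Omega)^3$ in \eqref{varform} is precisely the dual one to the worst nonlinear term $\nabla\dvec\odot\nabla\dvec\in L^2_{loc}(L^{3/2})$ by Sobolev duality.

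With these estimates in hand, Aubin--Lions compactness produces a subsequence with $\uvec^n\to\uvec$ strongly in $L^2_{loc}(\Gdiv)$ and $\dvec^n\to\dvec$ strongly in $L^2_{loc}(\V)$, together with the natural weak and weak-$\ast$ convergences in the higher spaces, which suffice to pass to the limit in every multilinear term of \eqref{varform} and \eqref{sy2}, including the stretching products where $\Delta\dvec^n$ appears at full strength (handled by weak $L^2(L^2)$ convergence of $\Delta\dvec^n$ paired with strong $L^2(L^6)$ convergence of $\dvec^n$), and in $\nabla_\dvec W(\dvec^n)\to\nabla_\dvec W(\dvec)$ by continuity. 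The energy inequality \eqref{eist} is then recovered by taking $\liminf$ termwise: the dissipation and the quadratic pieces of $\mathcal{E}$ are weakly lower semicontinuous, $\int W(\dvec)\leq\liminf\int W(\dvec^n)$ by Fatou after a.e.\ pointwise convergence of $\dvec^n$, and the forcing integral passes by strong convergence of $\uvec^n$; validity at $s=0$ comes from the strong convergence of the Galerkin initial data in $\Vdiv\times\V$ with $W(\dvec_0^n)\to W(\dvec_0)$ in $L^1$. The main obstacle I anticipate is the clean execution of the stretching cancellation at the Galerkin level, since the finite-dimensional subspaces must be compatible with both the Stokes operator and the Neumann Laplacian so that the discrete integrations by parts are exact; once this is set up, the rest is a careful but largely standard bookkeeping of nonlinear terms in the limit.
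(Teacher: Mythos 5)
First, a point of comparison: the paper does not prove this theorem at all. Theorem \ref{existence} is imported verbatim from \cite{CR} (``from \cite{CR} we recall the following''), so your proposal can only be measured against the strategy of that reference, which is indeed the Galerkin/energy-law/compactness route you describe. Your overall architecture --- derive the energy identity by testing the momentum equation with $\uvec$ and the director equation with $-\Delta\dvec+\nabla_{\dvec}W(\dvec)$ so that the Sun--Liu stretching terms cancel, split the dissipation via convexity of $W_1$ to get the $H^2$ bound on $\dvec$, then pass to the limit with Aubin--Lions and recover \eqref{eist} by lower semicontinuity and Fatou --- is the right one and matches the design of the model.

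There is, however, a genuine gap at the step you yourself flag, and your proposed remedy does not close it. If $\dvec^n$ is sought in the span of the first $n$ Neumann eigenfunctions, the admissible test functions for the discrete director equation are elements of that span; but $-\Delta\dvec^n+\nabla_{\dvec}W(\dvec^n)$ is \emph{not} in the span, because $\nabla_{\dvec}W(\dvec^n)$ is a nonlinear image of $\dvec^n$. Choosing subspaces ``compatible with both the Stokes operator and the Neumann Laplacian'' is irrelevant here: the obstruction is the nonlinearity, not the operators, and testing with the projection $P_n\gvec^n$ destroys the exact cancellation with the stretching stress, which is the entire point of the Sun--Liu correction. The standard fix (used in \cite{CR} and going back to the Lin--Liu literature) is a \emph{semi}-Galerkin scheme: discretize only $\uvec$, solve the quasilinear parabolic equation for $\dvec$ exactly for each fixed $\uvec^n$ (fixed point plus parabolic regularity gives $\dvec^n\in L^2_{loc}(H^2)$), after which $-\Delta\dvec^n+\nabla_{\dvec}W(\dvec^n)$ is a legitimate $L^2$ test function and the cancellation is exact. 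A second, smaller gap: under \eqref{Wass1}--\eqref{Wass2} the potential $W$ has no upper growth bound, so for spectral projections $\dvec_0^n=P_n\dvec_0$ the convergence $W(\dvec_0^n)\to W(\dvec_0)$ in $L^1(\Omega)$ --- which you invoke to get \eqref{eist} at $s=0$ --- is unjustified and $\int_\Omega W(\dvec_0^n)$ need not even be finite; one must approximate the initial datum more carefully (e.g.\ by truncation) or exploit the structure of $W_1+W_2$ to ensure $\limsup_n\mathcal{E}(\wvec^n(0))\le\mathcal{E}(\wvec_0)$.
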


\begin{oss}
\upshape{
 The regularity of the test function $\varphi$ can be
justified by the regularity properties of the solution which imply
that
\begin{align}
&\uvec\otimes\uvec,\quad\nabla\dvec\odot\nabla\dvec,\quad(\Delta\dvec-\nabla_{\dvec}W(\dvec))\otimes\dvec
\in L^2_{loc}([0,\infty);L^{3/2}(\Omega)^{3\times 3}).
\nonumber
\end{align}
Their divergence is therefore in $L^2_{loc}([0,\infty);W^{-1,3/2}(\Omega)^3)$.
}
\end{oss}

Let us resume some basic definitions and results from the theory of trajectory attractors
for non-autonomous evolution equations due to Chepyzhov and Vishik (see
\cite[Chap. XI and Chap. XIV]{CVbook} and \cite{CV}
for details).

Consider an abstract nonlinear non-autonomous evolution equation with
symbol $\sigma$ in a set $\Sigma$. The symbol $\sigma$ is a functional
parameter which represents all time-dependent terms (like external
forces) and coefficients of the equation.

The solutions are sought in a topological (usually Banach) space
$\mathcal{W}_M$ which consists
of vector-valued functions $w:[0,M]\to E$, where $E$ is a given Banach space.
The space $\mathcal{W}_M$
is endowed with
a given topology $\Theta_M$, such that $(\mathcal{W}_M,\Theta_M)$
is a Hausdorff topological space with a countable base. By means of $\mathcal{W}_M$
the space $\mathcal{W}_{loc}^+$ is defined as
$\mathcal{W}_{loc}^+:=\{w:[0,\infty)\to E:\Pi_{[0,M]}w\in\mathcal{W}_M,\mbox{ for all }M>0\}$, where $\Pi_{[0,M]}$ is the restriction
operator on the interval $[0,M]$.
The space $\mathcal{W}_{loc}^+$
is endowed with
a local convergence topology $\Theta_{loc}^+$,
i.e., the topology that induces the following
definition of convergence for a sequence $\{w_n\}\subset\mathcal{W}_{loc}^+$
to $w\in\mathcal{W}_{loc}^+$
$$w_n\to w\quad\mbox{in }\Theta_{loc}^+\quad\mbox{if}\quad\Pi_{[0,M]}w_n\to\Pi_{[0,M]}w\quad\mbox{in }\Theta_M,$$
for every $M>0$.
It can be seen that the space $(\mathcal{W}_{loc}^+,\Theta_{loc})$
is a Hausdorff topological space with a countable base.

For each $\sigma\in\Sigma$ let us denote by $\mathcal{K}_{\sigma}^M$ the set of {\sl some solutions}
from $\mathcal{W}_M$ and by $\mathcal{K}_{\sigma}^+$ the set of {\sl some solutions} from $\mathcal{W}_{loc}^+$.
The set $\mathcal{K}_{\sigma}^+$ is said to be a trajectory space of the evolution equation corresponding
to the symbol $\sigma\in\Sigma$.

Now, let $\mathcal{W}_b^+$ be a subspace of $\mathcal{W}_{loc}^+$
and assume that a metric $\rho_{\mathcal{W}_b^+}$ is defined on $\mathcal{W}_b^+$.
Assume also that
$\mathcal{K}_{\sigma}^+\subset\mathcal{W}_b^+$, for every $\sigma\in\Sigma$.

Recall that
the family of trajectory spaces $\{\mathcal{K}_{\sigma}^+\}_{\sigma\in\Sigma}$
is said to be \upshape{translation-coordi\-na\-ted (tr.-coord.)}
if for any $\sigma\in\Sigma$
and any $w\in\mathcal{K}_{\sigma}^+$ we have $T(t)w\in\mathcal{K}_{T(t)\sigma}^+$,
for every $t\geq 0$. The symbol space $\Sigma$ is assumed to be invariant with respect
to the translation semigroup $\{T(t)\}$, i.e., $T(t)\Sigma\subset\Sigma$, for all $t\geq 0$.

Consider the united trajectory space $\mathcal{K}_{\Sigma}^+:=\cup_{\sigma\in\Sigma}\mathcal{K}_{\sigma}^+$
of the family $\{\mathcal{K}_{\sigma}^+\}_{\sigma\in\Sigma}$.
We have $\mathcal{K}_{\Sigma}^+\subset \mathcal{W}_b^+$ and if the family $\{\mathcal{K}_{\sigma}^+\}_{\sigma\in\Sigma}$ is tr.-coord. then we have
$T(t)\mathcal{K}_{\Sigma}^+\subset\mathcal{K}_{\Sigma}^+$, for every $t\geq 0$,
i.e., the translation semigroup $\{T(t)\}$ acts on $\mathcal{K}_{\Sigma}^+$.

Introduce now the family
$$\mathcal{B}_{\Sigma}^+:=\{B\subset\mathcal{K}_{\Sigma}^+:B\mbox{ bounded in } \mathcal{W}_b^+
\mbox{ w.r.t. the metric }\rho_{\mathcal{W}_b^+}\}.$$
\begin{defn}
A set $P\subset\mathcal{W}_{loc}^+$ is said to be a uniformly (w.r.t. $\sigma\in\Sigma$)
attracting set for the family $\{\mathcal{K}_{\sigma}^+\}_{\sigma\in\Sigma}$ in the
topology $\Theta_{loc}^+$ if $P$ is uniformly (w.r.t. $\sigma\in\Sigma$)
attracting for the family $\mathcal{B}_{\Sigma}^+$, i.e. for any $B\in\mathcal{B}_{\Sigma}^+$
and for any neighbourhood $\mathcal{O}(P)$ in $\Theta_{loc}^+$
there exists $t_1\geq 0$ such that $T(t)B\subset\mathcal{O}(P)$, for every $t\geq t_1$.
\end{defn}
\begin{defn}
A set $\mathcal{A}_{\Sigma}\subset\mathcal{W}_{loc}^+$
is said to be a uniform (w.r.t. $\sigma\in\Sigma$) trajectory attractor
of the translation semigroup $\{T(t)\}$ in the topology $\Theta_{loc}^+$
if $\mathcal{A}_{\Sigma}$ is compact in $\Theta_{loc}^+$,
$\mathcal{A}_{\Sigma}$ is a uniformly (w.r.t. $\sigma\in\Sigma$)
attracting set for $\{\mathcal{K}_{\sigma}^+\}_{\sigma\in\Sigma}$ in the
topology $\Theta_{loc}^+$, and
$\mathcal{A}_{\Sigma}$ is the minimal compact and
uniformly (w.r.t. $\sigma\in\Sigma$)
attracting set for the family $\{\mathcal{K}_{\sigma}^+\}_{\sigma\in\Sigma}$ in the
topology $\Theta_{loc}^+$, i.e., if $P$ is any compact uniformly (w.r.t. $\sigma\in\Sigma$)
attracting set for the family $\{\mathcal{K}_{\sigma}^+\}_{\sigma\in\Sigma}$,
then $\mathcal{A}_{\Sigma}\subset P$.
\end{defn}
From the definition it follows that, if the trajectory attractor exists, then it is unique.

In order to prove some properties of the trajectory attractor
we need the set $\mathcal{K}_{\Sigma}^+$ to be closed
in $\Theta_{loc}^+$.  Assume that $\Sigma$ is a complete metric space.
Recall that the family $\{\mathcal{K}_{\sigma}^+\}_{\sigma\in\Sigma}$
is called $(\Theta_{loc}^+,\Sigma)-$closed if the graph set
$\cup_{\sigma\in\Sigma}\mathcal{K}_{\sigma}^+\times\{\sigma\}$
is closed in the topological space $\Theta_{loc}^+\times\Sigma$.
If $\{\mathcal{K}_{\sigma}^+\}_{\sigma\in\Sigma}$ is $(\Theta_{loc}^+,\Sigma)-$closed
and $\Sigma$ is compact, then $\mathcal{K}_{\Sigma}^+$ is closed in $\Theta_{loc}^+$.

By applying \cite[Chap. XI, Theorem 2.1]{CVbook} to the topological
space $\mathcal{W}_{loc}^+$, to the family $\mathcal{B}_{\Sigma}^+$
and to the family
$$\mathcal{B}_{\omega(\Sigma)}^+:=\{B\subset\mathcal{K}_{\omega(\Sigma)}^+:B\mbox{ bounded in } \mathcal{W}_b^+
\mbox{ w.r.t. the metric }\rho_{\mathcal{W}_b^+}\},$$
where $\mathcal{K}_{\omega(\Sigma)}^+:=\cup_{\sigma\in\omega(\Sigma)}\mathcal{K}_{\sigma}^+$
and where $\omega(\Sigma)$ is the $\omega-$limit set of $\Sigma$,
we can state
the following (see also \cite[Theorem 3.1]{CV} and \cite[Chap. XIV, Theorem 3.1]{CVbook})
\begin{thm}
\label{trajattract}
Let the spaces $(\mathcal{W}_{loc}^+,\Theta_{loc}^+)$ and $(\mathcal{W}_b^+,\rho_{\mathcal{W}_b^+})$
be as above, and the family of trajectory spaces $\{\mathcal{K}_{\sigma}^+\}_{\sigma\in\Sigma}$
corresponding to the evolution equation with symbols $\sigma\in\Sigma$ be such that
$\mathcal{K}_{\sigma}^+\subset\mathcal{W}_b^+$, for every $\sigma\in\Sigma$.
Assume there exists a subset $P\subset\mathcal{W}_{loc}^+$
which is compact in $\Theta_{loc}^+$ and uniformly (w.r.t. $\sigma\in\Sigma$)
attracting in $\Theta_{loc}^+$
for the family $\{\mathcal{K}_{\sigma}^+\}_{\sigma\in\Sigma}$ in the
topology $\Theta_{loc}^+$.
Then, the translation semigroup $\{T(t)\}_{t\geq 0}$
(acting on $\mathcal{K}_{\Sigma}^+$
if the family $\{\mathcal{K}_{\sigma}^+\}_{\sigma\in\Sigma}$ is tr.-coord.)

possesses a (unique) uniform (w.r.t. $\sigma\in\Sigma$) trajectory attractor $\mathcal{A}_{\Sigma}\subset P$.
If the semigroup $\{T(t)\}_{t\geq 0}$ is continuous in $\Theta_{loc}^+$, then $\mathcal{A}_{\Sigma}$
is strictly invariant
$$T(t)\mathcal{A}_{\Sigma}=\mathcal{A}_{\Sigma},\qquad\forall t\geq 0.$$
In addition, if the family $\{\mathcal{K}_{\sigma}^+\}_{\sigma\in\Sigma}$ is tr.-coord.
and $(\Theta_{loc}^+,\Sigma)-$closed, with $\Sigma$ a compact metric space, then
$\mathcal{A}_{\Sigma}\subset\mathcal{K}_{\Sigma}^+$ and
$$\mathcal{A}_{\Sigma}=\mathcal{A}_{\omega(\Sigma)},$$
where $\mathcal{A}_{\omega(\Sigma)}$ is the uniform (w.r.t. $\sigma\in\omega(\Sigma)$) trajectory attractor
for the family $\mathcal{B}_{\omega(\Sigma)}^+$
and $\mathcal{A}_{\omega(\Sigma)}\subset\mathcal{K}_{\omega(\Sigma)}^+$.
\end{thm}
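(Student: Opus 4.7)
The plan is to construct $\mathcal{A}_\Sigma$ explicitly as the $\omega$-limit set of the united trajectory space under the translation semigroup, namely
$$\mathcal{A}_\Sigma := \bigcap_{t\geq 0}\overline{\bigcup_{s\geq t} T(s)\mathcal{K}_\Sigma^+},$$
with the closure taken in $\Theta_{loc}^+$, and then to verify in turn the four defining properties of a trajectory attractor: (i) $\mathcal{A}_\Sigma\subset P$, (ii) $\mathcal{A}_\Sigma$ is compact in $\Theta_{loc}^+$, (iii) $\mathcal{A}_\Sigma$ uniformly (w.r.t. $\sigma\in\Sigma$) attracts $\mathcal{B}_\Sigma^+$, and (iv) any compact uniformly attracting set contains $\mathcal{A}_\Sigma$.

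For (i), given $w\in\mathcal{A}_\Sigma$ and any neighbourhood $\mathcal{O}(P)$, the attracting hypothesis applied to $\mathcal{K}_\Sigma^+\in\mathcal{B}_\Sigma^+$ produces $t_1\geq 0$ with $T(s)\mathcal{K}_\Sigma^+\subset\mathcal{O}(P)$ for every $s\geq t_1$; since $w$ lies in the $\Theta_{loc}^+$-closure of $\bigcup_{s\geq t_1}T(s)\mathcal{K}_\Sigma^+$, the closedness of $P$ (as a compact subset of the Hausdorff space $\Theta_{loc}^+$) and the arbitrariness of $\mathcal{O}(P)$ force $w\in P$. Property (ii) is then immediate: $\mathcal{A}_\Sigma$ is closed by construction and sits inside the compact set $P$. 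For (iii), if $\mathcal{A}_\Sigma$ failed to attract some $B\in\mathcal{B}_\Sigma^+$, one could extract $t_n\to\infty$ and $w_n\in B$ with $T(t_n)w_n$ staying outside a fixed neighbourhood of $\mathcal{A}_\Sigma$; using that $P$ attracts $B$ and is compact, a subsequence $T(t_{n_k})w_{n_k}$ converges in $\Theta_{loc}^+$ to a limit which by construction lies in $\mathcal{A}_\Sigma$, a contradiction (the countable base of $\Theta_{loc}^+$ is used to justify reducing attraction to sequences). Minimality (iv) holds because every point of $\mathcal{A}_\Sigma$ is an $\omega$-limit point of $\mathcal{K}_\Sigma^+$, and such points must belong to any compact uniformly attracting set by its closedness.

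Strict invariance $T(t)\mathcal{A}_\Sigma=\mathcal{A}_\Sigma$ under continuity of $\{T(t)\}$ follows from the classical argument combining the semigroup property with continuity of translations (for $\subset$), and compactness plus uniform attraction (for $\supset$, via pulling preimages backward along a subsequence). For the last part, assuming tr.-coord., $(\Theta_{loc}^+,\Sigma)$-closedness, and $\Sigma$ compact, the closedness hypothesis entails that $\mathcal{K}_\Sigma^+$ itself is closed in $\Theta_{loc}^+$; hence each $w\in\mathcal{A}_\Sigma$, being a $\Theta_{loc}^+$-limit of elements $T(t_n)w_n$ with $w_n\in\mathcal{K}_{\sigma_n}^+$ and therefore $T(t_n)w_n\in\mathcal{K}_{T(t_n)\sigma_n}^+\subset\mathcal{K}_\Sigma^+$ (using tr.-coord. and $T(t)\Sigma\subset\Sigma$), is itself in $\mathcal{K}_\Sigma^+$. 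The identity $\mathcal{A}_\Sigma=\mathcal{A}_{\omega(\Sigma)}$ is obtained by checking that $\mathcal{A}_\Sigma$ is a compact uniformly (w.r.t. $\sigma\in\omega(\Sigma)$) attracting set for $\{\mathcal{K}_\sigma^+\}_{\sigma\in\omega(\Sigma)}$ and, conversely, that $\mathcal{A}_{\omega(\Sigma)}$ attracts $\mathcal{B}_\Sigma^+$: combining the characterization $\omega(\Sigma)=\bigcap_{t\geq 0}\overline{\bigcup_{s\geq t}T(s)\Sigma}$ with the graph-closedness transfers symbol limits into trajectory limits.

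The main obstacle is precisely this last identification, which hinges on the interplay between the symbol dynamics on $\Sigma$ and the trajectory dynamics: both the tr.-coord. property (to guarantee that translates of trajectories remain trajectories for translated symbols) and the closedness of $\bigcup_{\sigma\in\Sigma}\mathcal{K}_\sigma^+\times\{\sigma\}$ (to ensure that $\Theta_{loc}^+$-limits of such translates are trajectories for the corresponding symbol limits) enter in an essential way. All the remaining steps follow the classical Chepyzhov--Vishik scheme, applied here to the concrete choice of $\mathcal{W}_{loc}^+$ and $\mathcal{B}_\Sigma^+$ via \cite[Chap.~XI, Thm.~2.1]{CVbook}.
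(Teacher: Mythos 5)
You should first note that the paper does not actually prove this statement: it is quoted as an application of \cite[Chap.~XI, Theorem~2.1]{CVbook} to the space $\mathcal{W}_{loc}^+$ and the families $\mathcal{B}_{\Sigma}^+$, $\mathcal{B}_{\omega(\Sigma)}^+$, so the relevant comparison is with the standard Chepyzhov--Vishik argument you are trying to reconstruct. Your reconstruction contains a genuine gap at the very first step: you define $\mathcal{A}_\Sigma$ as the $\omega$-limit set of the \emph{whole} united trajectory space, $\mathcal{A}_\Sigma=\bigcap_{t\geq 0}\overline{\bigcup_{s\geq t}T(s)\mathcal{K}_\Sigma^+}$, and then in step (i) you invoke ``the attracting hypothesis applied to $\mathcal{K}_\Sigma^+\in\mathcal{B}_\Sigma^+$''. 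But $\mathcal{B}_\Sigma^+$ consists only of the subsets of $\mathcal{K}_\Sigma^+$ that are \emph{bounded} with respect to $\rho_{\mathcal{W}_b^+}$, and nothing in the hypotheses makes $\mathcal{K}_\Sigma^+$ itself bounded; the attracting property of $P$ is assumed only uniformly over $\mathcal{B}_\Sigma^+$. This is not a vacuous distinction in the present paper: $\mathcal{K}_{\mathcal{H}_+(\hvec_0)}^+$ contains trajectories of arbitrarily large initial energy, and the dissipative estimate \eqref{dissipativeest} has a transient $c\,\rho_{\mathcal{W}_b^+}^2(\wvec,\boldsymbol 0)e^{-kt/2}$ whose decay time grows without bound over the unbounded family, so there is no single $t_1$ with $T(t)\mathcal{K}_\Sigma^+\subset\mathcal{O}(P)$ for $t\geq t_1$. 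Consequently your steps (i) ($\mathcal{A}_\Sigma\subset P$), (ii) (compactness), and (iv) (minimality, which again needs $\omega$-limit points of the \emph{unbounded} set to land in an arbitrary compact attracting set) all break down with this definition.

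The repair is the one used in \cite[Chap.~XI, Theorem~2.1]{CVbook}: set $\mathcal{A}_\Sigma:=\Big[\bigcup_{B\in\mathcal{B}_\Sigma^+}\omega(B)\Big]_{\Theta_{loc}^+}$ with $\omega(B):=\bigcap_{t\geq 0}\overline{\bigcup_{s\geq t}T(s)B}$, taking $\omega$-limits only of sets in the attracted family $\mathcal{B}_\Sigma^+$ and then closing the union (a union of compacta need not be compact). For each such $B$ the attracting hypothesis does apply, giving $\omega(B)\subset P$, hence $\mathcal{A}_\Sigma$ is a closed subset of the compact set $P$; attraction of each $B$ by $\omega(B)\subset\mathcal{A}_\Sigma$ and minimality then go through exactly as in your sketch (your use of the countable base to metrize $P$ and argue by sequences is fine). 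The remaining parts of your argument --- strict invariance under continuity of $T(t)$, the inclusion $\mathcal{A}_\Sigma\subset\mathcal{K}_\Sigma^+$ from tr.-coordination plus $(\Theta_{loc}^+,\Sigma)$-closedness and compactness of $\Sigma$, and the identification $\mathcal{A}_\Sigma=\mathcal{A}_{\omega(\Sigma)}$ --- are sketched at the right level and are consistent with the cited scheme once the definition of $\mathcal{A}_\Sigma$ is corrected, since every point of the corrected $\mathcal{A}_\Sigma$ is still a $\Theta_{loc}^+$-limit of translates $T(t_n)w_n$ with $w_n$ ranging in a \emph{bounded} subset of $\mathcal{K}_\Sigma^+$.
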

Now, let us suppose that a dissipative estimate of the following form holds
\begin{align}
&\rho_{\mathcal{W}_b^+}(T(t)w,w_0)\leq\Lambda_0\Big(\rho_{\mathcal{W}_b^+}(w,w_0)\Big)e^{-kt}+\Lambda_1,
\qquad\forall t\geq t_0,
\label{abstdiss}
\end{align}
for every $w\in\mathcal{K}_{\Sigma}^+$, for some fixed $w_0\in\mathcal{W}_b^+$ and for
some $\Lambda_0:[0,\infty)\to[0,\infty)$ locally bounded and some constants $\Lambda_1\geq 0$, $k>0$, where
both $\Lambda_0$ and $\Lambda_1$ are independent of $w$.
Furthermore, suppose that the ball
$$B_{\mathcal{W}_b^+}(w_0,2\Lambda_1):=\{w\in\mathcal{W}_b^+:\rho_{\mathcal{W}_b^+}(w,w_0)\leq 2\Lambda_1\}$$
is compact in $\Theta_{loc}^+$.
By virtue of \eqref{abstdiss} such ball is a uniformly (w.r.t. $\sigma\in\Sigma$)
attracting set for the family $\{\mathcal{K}_{\sigma}^+\}_{\sigma\in\Sigma}$ in the
topology $\Theta_{loc}^+$ (actually, $B_{\mathcal{W}_b^+}(w_0,2\Lambda_1)$
is uniformly (w.r.t. $\sigma\in\Sigma$) absorbing for the family $\mathcal{B}_{\Sigma}^+$).
Theorem \ref{trajattract} therefore entails that
the translation semigroup $\{T(t)\}_{t\geq 0}$
possesses a (unique) uniform (w.r.t. $\sigma\in\Sigma$) trajectory attractor
$\mathcal{A}_{\Sigma}\subset B_{\mathcal{W}_b^+}(w_0,2\Lambda_1)$.

\subsection{The trajectory attractor for a general smooth potential $W$}
\label{esitra1}

We now apply the scheme described in Subsection~\ref{preli} to system \eqref{sy1}--\eqref{sy3} coupled with
boundary conditions \eqref{sy4}--\eqref{sy5} in
order to prove the existence of a trajectory attractor for that system.

For $M>0$ introduce the space
\begin{align}
\mathcal{W}_M:=&\Big\{[\uvec,\dvec]\in L^{\infty}(0,M;\Gdiv\times\V)\cap
L^2(0,M;\Vdiv\times H^2(\Omega)^3):\nonumber\\
&
\uvec_t\in L^2(0,M;W^{-1,3/2}(\Omega)^3),\dvec_t\in L^2(0,M;L^{3/2}(\Omega)^3)
\Big\},
\label{FM}
\end{align}
endowed with the weak topology $\Theta_M$ which induces the following notion
of weak convergence: a sequence $\{[\uvec_m,\dvec_m]\}\subset\mathcal{W}_M$
is said to converge to $[\uvec,\dvec]\in\mathcal{W}_M$ in $\Theta_M$
if
\begin{align}
&\uvec_m\rightharpoonup\uvec\qquad\mbox{weakly}^{\ast}\mbox{ in }L^{\infty}(0,M;\Gdiv)
\mbox{ and weakly in }L^2(0,M;\Vdiv),\label{wconv1}\\
&(\uvec_m)_t\rightharpoonup\uvec_t\qquad\mbox{weakly in }L^2(0,M;W^{-1,3/2}(\Omega)^3),\\
&\dvec_m\rightharpoonup\dvec\qquad\mbox{weakly}^{\ast}\mbox{ in }L^{\infty}(0,M;\V)
\mbox{ and weakly in }L^2(0,M;H^2(\Omega)^3),\label{conv3}\\
&(\dvec_m)_t\rightharpoonup\dvec_t\qquad\mbox{weakly in }L^2(0,M;L^{3/2}(\Omega)^3).\label{wconv4}
\end{align}
Then the space
\begin{align}
\mathcal{W}_{loc}^+:=&\Big\{[\uvec,\dvec]\in L_{loc}^{\infty}([0,\infty);\Gdiv\times\V)\cap
L_{loc}^2([0,\infty);\Vdiv\times H^2(\Omega)^3):\nonumber\\
&
\uvec_t\in L_{loc}^2([0,\infty);W^{-1,3/2}(\Omega)^3),\dvec_t\in L_{loc}^2([0,\infty);L^{3/2}(\Omega)^3)
\Big\}
\label{Floc}
\end{align}
is defined, as well as the inductive limit weak topology $\Theta_{loc}^+$.

In $\mathcal{W}_{loc}^+$ we consider the following
subspace
\begin{align}
\mathcal{W}_b^+:=&\Big\{[\uvec,\dvec]\in L^{\infty}(0,\infty;\Gdiv\times\V)\cap
L_{tb}^2(0,\infty;\Vdiv\times H^2(\Omega)^3):\nonumber\\
&
\uvec_t\in L_{tb}^2(0,\infty;W^{-1,3/2}(\Omega)^3),\dvec_t\in L_{tb}^2(0,\infty;L^{3/2}(\Omega)^3),
W(\dvec)\in L^{\infty}(0,\infty;L^1(\Omega))
\Big\},
\label{Fb}
\end{align}
and on $\mathcal{W}_b^+$ we define the following metric
\begin{align}
&\rho_{\mathcal{W}_{b}^+}(\wvec_1,\wvec_2):=\Vert\wvec_1-\wvec_2\Vert_{L^{\infty}(0,\infty;\Gdiv\times\V)}
+\Vert\wvec_1-\wvec_2\Vert_{L_{tb}^2(0,\infty;\Vdiv\times H^2(\Omega)^3)}\\
&+\Vert(\uvec_1)_t-(\uvec_2)_t\Vert_{L_{tb}^2(0,\infty;W^{-1,3/2}(\Omega)^3)}
+\Vert(\dvec_1)_t-(\dvec_2)_t\Vert_{L_{tb}^2(0,\infty;L^{3/2}(\Omega)^3)}\\
&+\Big\Vert\int_{\Omega}W(\dvec_1)-\int_{\Omega}W(\dvec_2)\Big\Vert_{L^{\infty}(0,\infty)}^{1/2},
\label{dFb}
\end{align}
for every $\wvec_1=[\uvec_1,\dvec_1],\wvec_2=[\uvec_2,\dvec_2]\in\mathcal{W}_b^+$.

\begin{defn}
\label{trajdef}
For every $\hvec\in L^2_{loc}([0,\infty);\Vdiv')$ the trajectory space $\mathcal{K}_{\hvec}^+$
of system \eqref{sy1}--\eqref{sy3}, \eqref{sy4}--\eqref{sy5} with external force $\hvec$
is the set of all weak solutions $\wvec=[\uvec,\dvec]$
of this system with the regularity properties \eqref{reg1}--\eqref{reg4} for $\uvec$, $\dvec$,
and satisfying the energy inequality \eqref{eist}
for all $t\geq s$ and for a.a. $s\in(0,\infty)$.
\end{defn}

The trajectory space $\mathcal{K}_{\hvec}^M$ on the bounded interval $[0,M]$
can be defined similarly, for every $M>0$.

\begin{oss}
{\upshape
Notice that in the definition of the trajectory space $\mathcal{K}_{\hvec}^+$ we do not assume
that the energy inequality \eqref{eist} is satisfied also for $s=0$.
In this way the family $\{\mathcal{K}_{\hvec}^+\}_{\hvec\in\Sigma}$
($\Sigma$ may be a generic symbol space included in $L^2_{loc}([0,\infty);\Vdiv')$)
is tr.-coord. and therefore the translation semigroup $\{T(t)\}$ acts on $\mathcal{K}_{\Sigma}^+$.
}
\end{oss}

According to Theorem \ref{existence}, if \eqref{Wass1} and \eqref{Wass2} hold, then, for every $\wvec_0=[\uvec_0,\dvec_0]$ such that
\begin{align}
&\uvec_0\in\Vdiv,\qquad\dvec_0\in\V,\qquad W(\dvec_0)\in L^1(\Omega),
\label{idatass}
\end{align}
and every $\hvec$ such that
\begin{align}
\hvec\in L^2_{loc}([0,\infty);\Vdiv')
\label{hdatass}
\end{align}
there exists a trajectory $\wvec\in\mathcal{K}_{\hvec}^+$
for which $\wvec(0)=\wvec_0$.

Consider now
$$\hvec_0\in L_{tb}^2(0,\infty;\Vdiv')$$
so that $\hvec_0$
is translation compact (tr.-c.) in $L_{loc,w}^2([0,\infty);\Vdiv')$
(see, e.g., \cite[Proposition 6.8]{CV}). For the symbol space
$\Sigma$ we choose the hull of $\hvec_0$ in $L_{loc,w}^2([0,\infty);\Vdiv')$
\begin{align}
\Sigma=\mathcal{H}_+(\hvec_0):=[\{T(t)\hvec_0,t\geq 0\}]_{L_{loc,w}^2([0,\infty);\Vdiv')}
\label{hull}
\end{align}
which is a compact metric space.
Recall (see \cite[Proposition 6.9]{CV}) that every $\hvec\in\mathcal{H}_+(\hvec_0)$
is also tr.-c. in $L_{loc,w}^2([0,\infty);\Vdiv')$ and
\begin{align}
&\Vert\hvec\Vert_{L_{tb}^2(0,\infty;\Vdiv')}\leq\Vert\hvec_0\Vert_{L_{tb}^2(0,\infty;\Vdiv')},
\qquad\forall\hvec\in\mathcal{H}_+(\hvec_0).
\label{hh0}
\end{align}

In order to prove the closure of the space of the trajectory attractor,
we shall also assume that $\hvec_0$ is tr.-c. in $L_{loc}^2([0,\infty);\Vdiv')$
or tr.-c. in $L_{loc,w}^2([0,\infty);\Gdiv)$.
The latter condition is equivalent to the assumption that $\hvec_0\in L_{tb}^2(0,\infty;\Gdiv)$.
It is not difficult to prove that the hull of $\hvec_0$ with one of these assumptions
(defined as in \eqref{hull} with the clousure in the above spaces)
coincides with the hull $\mathcal{H}_+(\hvec_0)$ defined as in \eqref{hull}.
\color{black}

 In order to state our first result on the existence of the trajectory attractor, we  shall make the following assumption
on the potential $W$
\begin{description}
\item[(W1)]
$W$ satisfies \eqref{Wass1}, \eqref{Wass2} and there exist
$c_0\geq 0$, $c_1>0$, $c_2\in\mathbb{R}$ and $\delta>0$ such that
\begin{align}
&W_1(\dvec)\leq c_0(1+|\nabla_{\dvec} W_1(\dvec)|^2),\label{w1}\\
&W_1(\dvec)\geq c_1|\dvec|^{2+\delta}-c_2,\label{w2}
\end{align}
for every $\dvec\in\mathbb{R}^3$.
\end{description}

Let us now state the following Lemma which will be useful in order to prove our next
main Theorem~\ref{main}.
\begin{lem}
Take assumption {\bf (W1)} on $W$, then
there exist $\kappa,\,\eta\,, {\color{red}l}>0$ (independent of $\dvec$)
such that
\begin{align}
&\Vert-\Delta\dvec+\nabla_{\dvec} W(\dvec)\Vert^2\geq\kappa\Vert\nabla\dvec\Vert^2
+\eta\int_{\Omega}W(\dvec)-{\color{red}l},
\label{prelimest}
\end{align}
for all $\dvec\in H^2(\Omega)^3$, with $\partial_{\boldsymbol{n}}\dvec=0$ on $\partial\Omega$.
\end{lem}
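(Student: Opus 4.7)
The plan is to expand the square, integrate by parts on the cross term using the homogeneous Neumann boundary condition, and then exploit the convexity of $W_1$, the Lipschitz character of $\nabla_{\dvec}W_2$, and the growth assumptions \textbf{(W1)}.

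First I would write
$$\Vert-\Delta\dvec+\nabla_{\dvec}W(\dvec)\Vert^2 = \Vert\Delta\dvec\Vert^2 - 2\int_\Omega\Delta\dvec\cdot\nabla_{\dvec}W(\dvec) + \Vert\nabla_{\dvec}W(\dvec)\Vert^2.$$
For the cross term, a componentwise integration by parts, whose boundary contribution vanishes thanks to $\partial_{\boldsymbol n}\dvec=0$, yields
$$-\int_\Omega\Delta\dvec\cdot\nabla_{\dvec}W(\dvec) = \int_\Omega\sum_{i,j}\partial^2_{d_id_j}W(\dvec)\,\nabla d_i\cdot\nabla d_j.$$
Splitting $W=W_1+W_2$, the $W_1$-part is nonnegative by convexity, while the $W_2$-part is bounded below by $-L\Vert\nabla\dvec\Vert^2$, with $L$ the Lipschitz constant of $\nabla_{\dvec}W_2$ (so $|D^2W_2|\leq L$ a.e.). For the third term, Young gives $\Vert\nabla_{\dvec}W(\dvec)\Vert^2\geq\frac12\Vert\nabla_{\dvec}W_1(\dvec)\Vert^2-\Vert\nabla_{\dvec}W_2(\dvec)\Vert^2$; then \eqref{w1} yields $\Vert\nabla_{\dvec}W_1(\dvec)\Vert^2\geq \frac1{c_0}\int_\Omega W_1(\dvec) - |\Omega|$, whereas the Lipschitz bound $|\nabla_{\dvec}W_2(\dvec)|\leq|\nabla_{\dvec}W_2(0)|+L|\dvec|$ gives $\Vert\nabla_{\dvec}W_2(\dvec)\Vert^2\leq C(1+\Vert\dvec\Vert^2)$. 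Assembling, at this intermediate stage one obtains
$$\Vert-\Delta\dvec+\nabla_{\dvec}W(\dvec)\Vert^2 \geq \Vert\Delta\dvec\Vert^2 - 2L\Vert\nabla\dvec\Vert^2 + \frac{1}{2c_0}\int_\Omega W_1(\dvec) - C_1\Vert\dvec\Vert^2 - C_2.$$

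To convert $\Vert\Delta\dvec\Vert^2$ into a coercive control on $\Vert\nabla\dvec\Vert^2$, I would use the interpolation $\Vert\nabla\dvec\Vert^2\leq\varepsilon\Vert\Delta\dvec\Vert^2+C_\varepsilon\Vert\dvec\Vert^2$, which follows from $\Vert\nabla\dvec\Vert^2=-\int_\Omega\Delta\dvec\cdot\dvec$ (again the boundary term vanishes) and Young. Choosing $\varepsilon$ small enough, $\Vert\Delta\dvec\Vert^2\geq(2L+2\kappa)\Vert\nabla\dvec\Vert^2-C_3\Vert\dvec\Vert^2$ for any prescribed $\kappa>0$. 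Finally I would absorb the lingering $\Vert\dvec\Vert^2$ terms into $\int_\Omega W_1(\dvec)$: by \eqref{w2}, $\int_\Omega W_1(\dvec)\geq c_1\int_\Omega|\dvec|^{2+\delta}-c_2|\Omega|$, and Young's inequality $|\dvec|^2\leq\theta|\dvec|^{2+\delta}+C_\theta$ with sufficiently small $\theta$ hides the quadratic terms inside a small fraction of $\int_\Omega W_1(\dvec)$. Since $W_2\in C^{1,1}$ has at most quadratic growth, $\int_\Omega W_2(\dvec)\leq C(1+\Vert\dvec\Vert^2)$, so (via the same Young argument) $\int_\Omega W(\dvec)$ is controlled by $\int_\Omega W_1(\dvec)$ up to additive constants. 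Collecting what remains yields \eqref{prelimest} with appropriate $\kappa,\eta,l>0$.

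The main obstacle is bookkeeping rather than analysis: the Young/interpolation parameters must be selected in the correct order so that (i) the slice of $\Vert\Delta\dvec\Vert^2$ traded for $\Vert\nabla\dvec\Vert^2$ beats $2L\Vert\nabla\dvec\Vert^2$, and (ii) the surviving share of $\int_\Omega W_1(\dvec)$ still dominates the several $\Vert\dvec\Vert^2$ contributions and the $\int_\Omega W_2(\dvec)$-term, thereby fixing the constants $\kappa$, $\eta$, and $l$.
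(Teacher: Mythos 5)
Your argument is correct, and it rests on the same three pillars as the paper's proof: the convexity of $W_1$ neutralizes the cross term between $-\Delta\dvec$ and $\nabla_{\dvec}W_1(\dvec)$ (via integration by parts with the Neumann condition), assumption \eqref{w1} converts $\Vert\nabla_{\dvec}W_1(\dvec)\Vert^2$ into $\int_\Omega W_1(\dvec)$, and the coercivity \eqref{w2} together with the quadratic bounds on $W_2$ and $\nabla_{\dvec}W_2$ absorbs all the leftover $\Vert\dvec\Vert^2$ terms and upgrades $\int_\Omega W_1(\dvec)$ to $\int_\Omega W(\dvec)$. The two proofs differ only in secondary technical choices. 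The paper splits off $W_2$ at the very start, writing $\Vert-\Delta\dvec+\nabla_{\dvec}W(\dvec)\Vert^2\geq\frac12\Vert-\Delta\dvec+\nabla_{\dvec}W_1(\dvec)\Vert^2-\Vert\nabla_{\dvec}W_2(\dvec)\Vert^2$, so it never touches the Hessian of $W_2$; you instead keep the full $W$ in the cross term and control the $W_2$ contribution by the a.e. bound $|D^2W_2|\le L$, which is legitimate but forces you to beat the extra $-2L\Vert\nabla\dvec\Vert^2$ with a slice of $\Vert\Delta\dvec\Vert^2$. Secondly, to extract $\Vert\nabla\dvec\Vert^2$ the paper adds and subtracts $\dvec$ and invokes the Neumann elliptic estimate $c_i\Vert\dvec\Vert_{\V}\leq\Vert-\Delta\dvec+\dvec\Vert$, whereas you use only the elementary interpolation $\Vert\nabla\dvec\Vert^2\leq\varepsilon\Vert\Delta\dvec\Vert^2+C_\varepsilon\Vert\dvec\Vert^2$ coming from $\Vert\nabla\dvec\Vert^2=-\int_\Omega\Delta\dvec\cdot\dvec$; your route is slightly more self-contained, at the price of the extra bookkeeping you correctly identify. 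Both yield the same constants in spirit ($\kappa$ from the Laplacian budget, $\eta$ from a fraction of $\frac{1}{c_0}$, $l$ from $\Omega$ and $W$), so the proposal stands as a valid alternative write-up of the same proof.
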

\begin{proof}
Using {\bf (W1)}, we have
\begin{align}
&\Vert-\Delta\dvec+\nabla_{\dvec} W_1(\dvec)\Vert^2=\Vert-\Delta\dvec+\dvec\Vert^2+\Vert\dvec\Vert^2
-2(\dvec,-\Delta\dvec+\dvec)\nonumber\\
&+\Vert\nabla_{\dvec} W_1(\dvec)\Vert^2+2(-\Delta\dvec,\nabla_{\dvec} W_1(\dvec)).
\label{wd}
\end{align}
By means of \eqref{w1} we obtain
\begin{align}
&\Vert\nabla_{\dvec} W_1(\dvec)\Vert^2\geq\frac{1}{c_0}\int_{\Omega}W_1(\dvec)-|\Omega|\,.
\end{align}
Moreover,  using the convexity of $W_1$ (which implies that $(-\Delta\dvec,\nabla_{\dvec} W_1(\dvec))\geq 0$)
and the fact that
\eqref{w2} implies that $W_1(\dvec)\geq c_3|\dvec|^2-c_4$, from \eqref{wd} we get
\begin{align}
&\Vert-\Delta\dvec+\nabla_{\dvec} W_1(\dvec)\Vert^2\geq\epsilon\Vert-\Delta\dvec+\dvec\Vert^2
-\frac{\epsilon}{1-\epsilon}\Vert\dvec\Vert^2+\frac{1}{c_0}\int_{\Omega}W_1(\dvec)-|\Omega|\nonumber\\
&\geq\epsilon c_i\Vert\nabla\dvec\Vert^2+\epsilon\Big(c_i-\frac{1}{1-\epsilon}\Big)\Vert\dvec\Vert^2+
\frac{1}{c_0}\int_{\Omega}W_1(\dvec)-|\Omega|\nonumber\\
&\geq\epsilon c_i\Vert\nabla\dvec\Vert^2+\Big(\frac{c_3}{2c_0}-\epsilon\Big(c_i-\frac{1}{1-\epsilon}\Big)\Big)
\Vert\dvec\Vert^2+
\frac{1}{2c_0}\int_{\Omega}W_1(\dvec)-c_5\nonumber\\
&\geq\epsilon c_i\Vert\nabla\dvec\Vert^2+\frac{1}{2c_0}\int_{\Omega}W_1(\dvec)-c_5,
\label{la}
\end{align}
provided $\epsilon$ is chosen small enough. In \eqref{la} the positive constant
$c_i$ is such that
$$c_i\Vert\dvec\Vert_{\V}\leq\Vert-\Delta\dvec+\dvec\Vert,$$
for every $\dvec\in H^2(\Omega)^3$, with $\partial_{\boldsymbol{n}}\dvec=0$ on $\partial\Omega$.
Now, from \eqref{la} we have
\begin{align}
&\Vert-\Delta\dvec+\nabla_{\dvec} W(\dvec)\Vert^2\geq\frac{1}{2}\Vert-\Delta\dvec
+\nabla_{\dvec} W_1(\dvec)\Vert^2
-\Vert\nabla_{\dvec} W_2(\dvec)\Vert^2
\nonumber\\
&\geq\frac{\epsilon c_i}{2}\Vert\nabla\dvec\Vert^2+\frac{1}{4c_0}\int_{\Omega}W_1(\dvec)
-\Vert\nabla_{\dvec} W_2(\dvec)\Vert^2-\frac{c_5}{2},
\end{align}
and observe that, due to \eqref{w2} and to the assumption \eqref{Wass2} on $W_2$, we can choose $\eta>0$
such that
$$\frac{1}{4c_0}W_1(\dvec)-|\nabla_{\dvec} W_2(\dvec)|^2\geq\eta W(\dvec)-c_{\eta},\qquad\forall\dvec\in\mathbb{R}^3.$$
We therefore get \eqref{prelimest}
with $\kappa=\epsilon c_i/2$ and $l$ depending on $\Omega$, $W$
and with $\eta$ depending on $W$ only.
\end{proof}

In order to prove that the united trajectory space
$\mathcal{K}_{\mathcal{H}_+(\hvec_0)}^+$ is closed in $\Theta_{loc}^+$
we shall also need
the following  growth assumption
on $W$
\begin{description}
\item[(W2)]There exists $b>0$ such that
$$W(\dvec)\leq b(1+|\dvec|^6),\qquad\forall\dvec\in\mathbb{R}^3.$$
\end{description}

\begin{oss}
\upshape{Notice that both assumptions {\bf (W1)} and {\bf (W2)} are satisfied
in the case of the physically interesting double-well
potential
$$W(\dvec)=(|\dvec|^2-1)^2.$$
This function is usually assumed as a good smooth approximation
for a potential penalizing the deviation of the length $|\dvec|$
from the value 1, which is due to liquid crystal
molecules being of similar size.
}
\end{oss}

We can now state our first  main result
\begin{thm}
\label{main}
Let {\bf (W1)} holds and that $\hvec_0\in L_{tb}^2(0,\infty;\Vdiv')$.
Then,
the semigroup
$\{T(t)\}$ acting on $\mathcal{K}_{\mathcal{H}_+(\hvec_0)}^+$
possesses the uniform (w.r.t. $\hvec\in\mathcal{H}_+(\hvec_0)$)
trajectory attractor $\mathcal{A}_{\mathcal{H}_+(\hvec_0)}$.
This set is strictly invariant, bounded in $\mathcal{W}_b^+$ and compact in $\Theta_{loc}^+$.
In addition, if {\bf (W2)} holds
and $\hvec_0$ is tr.-c. in $L_{loc}^2([0,\infty);\Vdiv')$
or $\hvec_0\in L_{tb}^2(0,\infty;\Gdiv)$,
\color{black}
then $\mathcal{K}_{\mathcal{H}_+(\hvec_0)}^+$
is closed in $\Theta_{loc}^+$,
$\mathcal{A}_{\mathcal{H}_+(\hvec_0)}\subset\mathcal{K}_{\mathcal{H}_+(\hvec_0)}^+$
and
$$\mathcal{A}_{\mathcal{H}_+(\hvec_0)}=\mathcal{A}_{\omega(\mathcal{H}_+(\hvec_0))}.$$
\end{thm}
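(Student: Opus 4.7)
The plan is to invoke Theorem~\ref{trajattract} for the spaces $(\mathcal{W}_{loc}^+,\Theta_{loc}^+)$ and $(\mathcal{W}_b^+,\rho_{\mathcal{W}_b^+})$ with symbol space $\Sigma=\mathcal{H}_+(\hvec_0)$. According to the abstract discussion that closes Subsection~\ref{preli}, it is enough to produce a dissipative estimate of the form \eqref{abstdiss} and to verify that closed $\rho_{\mathcal{W}_b^+}$-balls are compact in $\Theta_{loc}^+$: the exponential bound then supplies a uniformly absorbing ball that plays the role of the compact attracting set $P$. Translation-coordination is the content of the remark after Definition~\ref{trajdef}, and $\mathcal{H}_+(\hvec_0)$ is already compact by construction, so only the $(\Theta_{loc}^+,\Sigma)$-closedness remains to be checked for the final assertions under~\textbf{(W2)}.

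The dissipative estimate is the workhorse. Starting from the integrated energy inequality \eqref{eist}, I would estimate $\langle\hvec,\uvec\rangle$ by Young's inequality so as to absorb a fraction of $\nu\Vert\nabla\uvec\Vert^2$ into the dissipation and leave a term bounded by $\Vert\hvec\Vert_{\Vdiv'}^2$. Using Poincar\'e's inequality $\Vert\uvec\Vert^2\le\lambda_1^{-1}\Vert\nabla\uvec\Vert^2$ together with the preliminary estimate \eqref{prelimest} bounds the dissipation below by $c\,\mathcal{E}(\wvec)$ up to constants, and a Gronwall-type argument then yields exponential decay of $\mathcal{E}(\wvec(t))$ toward a ball whose radius depends only on $\Vert\hvec_0\Vert_{L^2_{tb}(0,\infty;\Vdiv')}$ thanks to \eqref{hh0}. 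Integrating the resulting bound on unit time intervals gives the $L^2_{tb}$ norms of $\Vert\nabla\uvec\Vert$ and $\Vert-\Delta\dvec+\nabla_{\dvec}W(\dvec)\Vert$, which through \eqref{sy1} and \eqref{sy2} in turn control $\uvec_t$ in $L^2_{tb}(W^{-1,3/2})$ and $\dvec_t$ in $L^2_{tb}(L^{3/2})$; the coercivity \eqref{w2} keeps $\int_\Omega W(\dvec)$ in $L^\infty(0,\infty)$. Altogether this produces \eqref{abstdiss} in the metric $\rho_{\mathcal{W}_b^+}$.

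Compactness of closed $\rho_{\mathcal{W}_b^+}$-balls in $\Theta_{loc}^+$ follows from the observation that, restricted to any $[0,M]$, such a ball sits in a bounded subset of the reflexive product of Bochner spaces underlying $\mathcal{W}_M$; Banach--Alaoglu gives sequential compactness in $\Theta_M$, and a standard diagonal extraction over $M\in\mathbb{N}$ yields compactness in $\Theta_{loc}^+$. Together with the dissipative estimate this verifies all the hypotheses of Theorem~\ref{trajattract} and delivers the uniform trajectory attractor $\mathcal{A}_{\mathcal{H}_+(\hvec_0)}$, bounded in $\mathcal{W}_b^+$, compact in $\Theta_{loc}^+$, and strictly invariant thanks to the continuity of $\{T(t)\}$ in $\Theta_{loc}^+$.

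The main obstacle is the $(\Theta_{loc}^+,\Sigma)$-closedness under~\textbf{(W2)}. Given a sequence $\wvec_n=[\uvec_n,\dvec_n]\in\mathcal{K}_{\hvec_n}^+$ with $\wvec_n\to\wvec$ in $\Theta_{loc}^+$ and $\hvec_n\to\hvec$ in $\mathcal{H}_+(\hvec_0)$, the uniform bounds in $\mathcal{W}_M$ and the Aubin--Lions lemma supply strong convergence of $\uvec_n$ in $L^2(0,M;\Gdiv)$ and of $\dvec_n$ in $L^2(0,M;\V)$, hence of $\dvec_n$ in $L^2(0,M;L^p(\Omega)^3)$ for every $p<\infty$ via $\V\hookrightarrow L^6$. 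The growth $|\nabla_{\dvec}W(\dvec)|\le C(1+|\dvec|^5)$ supplied by~\textbf{(W2)}, combined with this strong convergence and a.e.\ extraction, then allows passing to the limit in $\nabla_{\dvec}W(\dvec_n)$ as well as in the stretching terms $\Delta\dvec_n\otimes\dvec_n$, $\dvec_n\otimes\Delta\dvec_n$ and in $\nabla\dvec_n\odot\nabla\dvec_n$ inside both \eqref{varform} and \eqref{sy2}; this is exactly where the Sobolev embedding in 3D is sharp, which explains the exponent $6$ in~\textbf{(W2)}. The extra hypothesis that $\hvec_0$ is tr.-c.\ in $L^2_{loc}([0,\infty);\Vdiv')$ or lies in $L^2_{tb}(0,\infty;\Gdiv)$ upgrades the convergence of $\hvec_n$ sufficiently to match it with the strong $L^2(\Gdiv)$-convergence of $\uvec_n$ and identify $\int\langle\hvec_n,\uvec_n\rangle$ with $\int\langle\hvec,\uvec\rangle$ in the limit. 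A final $\liminf$ in \eqref{eist}, using weak lower semicontinuity of the quadratic dissipation terms and Fatou for $\int_\Omega W(\dvec)$, yields the energy inequality for $\wvec$ and shows $\wvec\in\mathcal{K}_{\hvec}^+$; the last equality $\mathcal{A}_{\mathcal{H}_+(\hvec_0)}=\mathcal{A}_{\omega(\mathcal{H}_+(\hvec_0))}$ is then a direct output of Theorem~\ref{trajattract}.
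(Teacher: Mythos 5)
Your overall architecture coincides with the paper's: a dissipative estimate in the metric $\rho_{\mathcal{W}_b^+}$ obtained from \eqref{eist}, \eqref{prelimest}, Poincar\'e and an integral Gronwall lemma (Proposition \ref{dissprop}), compactness of $\rho_{\mathcal{W}_b^+}$-balls in $\Theta_{loc}^+$ by weak/weak-$\ast$ compactness and diagonal extraction, and then Theorem \ref{trajattract}. That part is fine.

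There is, however, a genuine flaw in your closedness argument. You assert that \textbf{(W2)} ``supplies'' the pointwise growth bound $|\nabla_{\dvec}W(\dvec)|\le C(1+|\dvec|^5)$ and you build the passage to the limit in $\nabla_{\dvec}W(\dvec_n)$ and in the stretching terms on that bound. But \textbf{(W2)} is only an upper bound on $W$ itself, $W(\dvec)\le b(1+|\dvec|^6)$, and a bound on a $C^2$ function does not imply any pointwise bound on its gradient; no such estimate is available under the stated hypotheses, so this step of your argument fails as written. The paper circumvents this entirely: \textbf{(W2)}, combined with the uniform bound on $\dvec_n$ in $L^\infty(0,M;\V)\hookrightarrow L^\infty(0,M;L^6(\Omega)^3)$, is used only to get $|\mathcal{E}(\wvec_n(s))|\le c$ uniformly in $n$ and a.e.\ $s$; plugging this into the energy inequality \eqref{eim} yields the bound \eqref{H2cont} on $\Vert-\Delta\dvec_n+\nabla_{\dvec}W(\dvec_n)\Vert_{L^2(0,M;L^2)}$, whence $\nabla_{\dvec}W(\dvec_n)$ is bounded in $L^2(0,M;L^2(\Omega)^3)$ and converges weakly to some $\boldsymbol{G}$; the weak limit is then identified as $\nabla_{\dvec}W(\dvec)$ through the strong (hence a.e.) convergence $\dvec_n\to\dvec$ in $L^2(0,M;\V)$ and the continuity of $\nabla_{\dvec}W$. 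You should replace your growth-bound step by this energy-based derivation. Relatedly, your treatment of the limit in \eqref{eist} addresses only the left-hand side (lower semicontinuity and Fatou); on the right-hand side you also need $\mathcal{E}(\wvec_n(s))\to\mathcal{E}(\wvec(s))$ for a.e.\ $s$, and in particular $\int_\Omega W(\dvec_n(s))\to\int_\Omega W(\dvec(s))$, which the paper obtains from the compact embedding \eqref{AubLio} giving $\dvec_n(s)\to\dvec(s)$ in $C(\overline\Omega)^3$ for a.e.\ $s$ along a subsequence. With these two repairs your proof matches the paper's.
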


For the proof of Theorem \ref{main} we need two propositions.
The first proposition establishes a dissipative estimate of the form \eqref{abstdiss}
for our problem

\begin{prop}
Assume {\bf (W1)} holds and that $\hvec_0\in L_{tb}^2(0,\infty;\Vdiv')$. Then, for all $\hvec\in\mathcal{H}_+(\hvec_0)$
we have $\mathcal{K}_{\hvec}^+\subset\mathcal{W}_b^+$ and the following dissipative estimate holds
\begin{align}
& \rho_{\mathcal{W}_{b}^+}(T(t)\wvec,\boldsymbol{0})
\leq c \rho_{\mathcal{W}_{b}^+}^2(\wvec,\boldsymbol{0})e^{-\frac{k}{2}t}
+\Lambda_0,\qquad\forall t\geq 1,
\label{dissipativeest}
\end{align}
for all $\wvec\in\mathcal{K}_{\hvec}^+$. Here $\Lambda_0$, $k$ and $c$ are positive constants
(independent of $\wvec$) that depend on $W$, $\Omega$, $\nu$ with only
$\Lambda_0$ depending on the norm of $\hvec_0$ in $L_{tb}^2(0,\infty;\Vdiv')$.
In particular $k$ can be given by $k=\min(\eta,2\kappa,\nu\lambda_1)$,
where $\lambda_1$ is the first eigenvalue of the Stokes operator and $\eta,\kappa$
are such that \eqref{prelimest} holds.
\label{dissprop}
\end{prop}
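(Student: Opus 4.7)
The plan is to combine the integrated energy inequality \eqref{eist} with the coercivity estimate \eqref{prelimest} to obtain an exponential decay for $\mathcal{E}(\wvec(t))$, and then to bootstrap this bound into each ingredient of the metric $\rho_{\mathcal{W}_b^+}$. First, I would absorb the forcing term in \eqref{eist} via Cauchy--Schwarz and Young: $\langle\hvec,\uvec\rangle\leq\frac{\nu}{2}\Vert\nabla\uvec\Vert^2+\frac{1}{2\nu}\Vert\hvec\Vert_{\Vdiv'}^2$; I would then apply \eqref{prelimest} to replace $\Vert-\Delta\dvec+\nabla_{\dvec} W(\dvec)\Vert^2$ with $\kappa\Vert\nabla\dvec\Vert^2+\eta\int_\Omega W(\dvec)-l$, and use Poincaré's inequality $\Vert\uvec\Vert^2\leq\lambda_1^{-1}\Vert\nabla\uvec\Vert^2$ to estimate the remaining $\frac{\nu}{2}\Vert\nabla\uvec\Vert^2$ from below by $\frac{\nu\lambda_1}{2}\Vert\uvec\Vert^2$. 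With $k=\min(\eta,2\kappa,\nu\lambda_1)$, and using \eqref{hh0} to make all constants uniform in $\hvec\in\mathcal{H}_+(\hvec_0)$, this produces an integrated differential inequality of the form
$$\mathcal{E}(\wvec(t))+k\int_s^t\mathcal{E}(\wvec(\tau))d\tau\leq \mathcal{E}(\wvec(s))+C\bigl(1+\Vert\hvec_0\Vert_{L^2_{tb}(0,\infty;\Vdiv')}^2\bigr)(1+(t-s))$$
for every $t\geq s$. A standard Gronwall argument then yields $\mathcal{E}(\wvec(t))\leq\mathcal{E}(\wvec(s))e^{-k(t-s)}+\Lambda$, with $\Lambda$ independent of $\wvec$ and $\hvec$; setting $s=0$ gives the $L^\infty$-in-time bound on $\Vert\uvec(t)\Vert$, $\Vert\nabla\dvec(t)\Vert$ and $\int_\Omega W(\dvec(t))$, while \eqref{w2} upgrades this to a bound on $\Vert\dvec(t)\Vert_{\V}$.

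Next, I would derive the $L^2_{tb}$ ingredients of $\rho_{\mathcal{W}_b^+}$. Integrating \eqref{eist} on intervals $[t,t+1]$ and using the $L^\infty$ bound just obtained provides uniform estimates on $\int_t^{t+1}(\Vert\nabla\uvec\Vert^2+\Vert-\Delta\dvec+\nabla_{\dvec} W(\dvec)\Vert^2)d\tau$, and a standard elliptic regularity estimate for $-\Delta\dvec+\dvec$ with homogeneous Neumann boundary data upgrades the second piece to the required $L^2_{tb}(H^2)$ bound on $\dvec$. For the time-derivative pieces I would go back to \eqref{varform} and \eqref{sy2}: using the 3D Sobolev embedding $H^1\hookrightarrow L^6$ and Hölder's inequality, each nonlinear term can be estimated in $L^{3/2}$ by a product of a factor bounded in $L^\infty_t L^2_x$ (which then lies also in $L^6$ via the $\V$-bound) with a factor bounded in $L^2_{tb}$. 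For instance, $\Vert\uvec\otimes\uvec\Vert_{L^{3/2}}\leq c\Vert\uvec\Vert\,\Vert\nabla\uvec\Vert$ and
$$\Vert(\Delta\dvec-\nabla_{\dvec}W(\dvec))\otimes\dvec\Vert_{L^{3/2}}\leq c\Vert-\Delta\dvec+\nabla_{\dvec}W(\dvec)\Vert\,\Vert\dvec\Vert_{\V},$$
while the corresponding bound for $\dvec_t$ in $L^2_{tb}(L^{3/2})$ comes directly from reading \eqref{sy2} term by term with the same interpolation scheme.

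Assembling the pieces, the $L^\infty$-in-time ingredients and the $\Vert\int_\Omega W(\dvec)\Vert_{L^\infty}^{1/2}$-ingredient contribute to $\rho_{\mathcal{W}_b^+}(T(t)\wvec,\boldsymbol{0})$ linearly in $\sqrt{\mathcal{E}(\wvec(0))}\leq c\,\rho_{\mathcal{W}_b^+}(\wvec,\boldsymbol{0})$, and therefore decay like $c\,\rho_{\mathcal{W}_b^+}(\wvec,\boldsymbol{0})e^{-kt/2}$, whereas the $L^2_{tb}$-ingredients and the time-derivative ingredients pick up a second $\sqrt{\mathcal{E}}$ factor from the nonlinear estimates and hence contribute at worst $c\,\mathcal{E}(\wvec(0))e^{-kt}\leq c\,\rho_{\mathcal{W}_b^+}^2(\wvec,\boldsymbol{0})e^{-kt}$. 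For $t\geq 1$, combining these with $e^{-kt}\leq e^{-kt/2}$ yields exactly \eqref{dissipativeest}, the constant $\Lambda_0$ depending only on $W$, $\Omega$, $\nu$ and $\Vert\hvec_0\Vert_{L^2_{tb}(0,\infty;\Vdiv')}$.

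The main technical obstacle will be the preceding step, namely estimating $\uvec_t$ in $W^{-1,3/2}$ and $\dvec_t$ in $L^{3/2}$ with exactly the right quadratic scaling in $\rho_{\mathcal{W}_b^+}$. The elastic-stress and stretching terms are the most delicate: bounding them in $L^{3/2}$ requires pairing the $L^\infty_t L^6_x$ control on $\dvec$ (coming from $\dvec\in L^\infty_t\V$ together with the 3D embedding) against the $L^2_{tb}$ control on $-\Delta\dvec+\nabla_{\dvec}W(\dvec)$, so that no power higher than $\rho_{\mathcal{W}_b^+}^2$ appears on the right-hand side of \eqref{dissipativeest}.
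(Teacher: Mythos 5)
Your overall strategy is the one the paper follows: lower-bound the dissipation terms of \eqref{eist} by $k\mathcal{E}-l$ using \eqref{prelimest} and Poincar\'e with $k=\min(\eta,2\kappa,\nu\lambda_1)$, run an integral Gronwall argument to get exponential decay of $\mathcal{E}$, and then convert that decay into bounds on each ingredient of $\rho_{\mathcal{W}_b^+}$ via H\"older/interpolation estimates on the nonlinear terms (your $L^2_x\cdot L^6_x\to L^{3/2}_x$ pairing is interchangeable with the paper's $L^\infty L^2\cap L^2L^6\subset L^4L^3$ interpolation). However, there is one genuine gap: you initialize the decay estimate by ``setting $s=0$'' and measuring everything against $\mathcal{E}(\wvec(0))$. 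By Definition \ref{trajdef}, elements of $\mathcal{K}_{\hvec}^+$ are only required to satisfy \eqref{eist} for a.e.\ $s\in(0,\infty)$, and the remark following that definition explains that $s=0$ is deliberately excluded so that the family of trajectory spaces is translation-coordinated. For a general $\wvec\in\mathcal{K}_{\hvec}^+$ the value $\mathcal{E}(\wvec(0))$ therefore controls nothing, and your step fails. The repair is exactly what the paper does: take the (essential) supremum of $\mathcal{E}(\wvec(s))$ over $s\in(0,1)$, which is bounded by $c\,\rho_{\mathcal{W}_b^+}^2(\wvec,\boldsymbol 0)$ through the $L^\infty(0,\infty)$ components of the metric, and start the decay from there --- this is the actual reason the estimate is only claimed for $t\geq 1$. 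Your only use of $t\geq 1$ is the inequality $e^{-kt}\leq e^{-kt/2}$, which holds for all $t\geq 0$, confirming that this point was missed.

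Two smaller remarks. First, ``a standard Gronwall argument'' needs care here: $\mathcal{E}(\wvec(\cdot))$ is not known to be absolutely continuous, so one must invoke the integral form of Gronwall's lemma for inequalities holding for all $t\geq s$ and a.e.\ $s$ (Ball's lemma, as modified in \cite{FG}), together with the weak continuity $\uvec\in C_w([0,\infty);\Gdiv)$, $\dvec\in C_w([0,\infty);\V)$ and the resulting lower semicontinuity of $t\mapsto\mathcal{E}(\wvec(t))$, in order to obtain the pointwise-in-$t$ bound for \emph{every} $t\geq 1$. Second, when passing from the $L^2_{tb}$ control of $\Vert-\Delta\dvec+\nabla_{\dvec}W(\dvec)\Vert$ to the $L^2_{tb}(H^2)$ bound on $\dvec$, you should use the convexity of $W_1$ (so that $(-\Delta\dvec,\nabla_{\dvec}W_1(\dvec))\geq 0$) and the Lipschitz bound on $\nabla_{\dvec}W_2$ to isolate $\Vert-\Delta\dvec+\dvec\Vert^2$, as in the paper; elliptic regularity alone does not separate $\Delta\dvec$ from $\nabla_{\dvec}W(\dvec)$.
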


\begin{proof}

Take now $\wvec\in\mathcal{K}_{\hvec}^+$, with $\hvec\in\mathcal{H}_+(\hvec_0)$.
Recalling the definition of the energy $\mathcal{E}$ \eqref{en}, using \eqref{prelimest} and Poincar\'{e} inequality
we have
\begin{align}
&\Vert-\Delta\dvec+\nabla_{\dvec} W(\dvec)\Vert^2+\frac{\nu}{2}\Vert\nabla\uvec\Vert^2
\geq k\mathcal{E}(\wvec)-l,\qquad\wvec=[\uvec,\dvec]
\label{ka}
\end{align}
where $k=\min(\eta,2\kappa,\nu\lambda_1)$, $\lambda_1$ being the first eigenvalue
of the Stokes operator,
and $l$ (depending on $\Omega$, $W$ only) is the same as in \eqref{prelimest}.

Therefore, by combining \eqref{ka} with the energy inequality \eqref{eist}
we deduce that $\wvec$ satisfies
the integral inequality
\begin{align}
\mathcal{E}(\wvec(t))+k\int_0^t\mathcal{E}(\wvec(\tau))d\tau
\leq &l(t-s)+\frac{1}{2\nu}\int_s^t\Vert\hvec(\tau)\Vert_{\Vdiv'}^2d\tau
+\mathcal{E}(\wvec(s))+k\int_0^s\mathcal{E}(\wvec(\tau))d\tau,
\end{align}
for all $t\geq s$ and for a.e. $s\in(0,\infty)$.
We can now apply a suitable modification
\cite[Lemma 1]{FG} of an integral Gronwall lemma due to Ball
\cite[Lemma 7.2]{Ba}
and deduce that
\begin{align}
&\mathcal{E}(\wvec(t))\leq\mathcal{E}(\wvec(s))e^{-k(t-s)}
+\frac{1}{2\nu}\int_s^t e^{-k(t-\tau)}\Big(\Vert\hvec(\tau)\Vert^2_{\Vdiv'}+2\nu l\Big)d\tau,
\label{n0}
\end{align}
for all $t\geq s$ and for a.e. $s\in(0,\infty)$.
Notice that, due to the regularity properties
of the solution, which imply
that $\uvec\in C_w([0,\infty);\Gdiv)$,
$\dvec\in C_w([0,\infty);\V)$
(and hence $\dvec\in C([0,\infty);L^2(\Omega)^3)$),
  and to the fact that, thanks to \eqref{Wass2},
$W$ is a quadratic perturbation of a convex function,
then $\mathcal{E}(\wvec(\cdot)):[0,\infty)\to\mathbb{R}$ is lower semicontinuous.

Hence
\begin{align}
&\mathcal{E}(\wvec(t))\leq e^k\sup_{s\in(0,1)}\mathcal{E}(\wvec(s))e^{-kt}
+\frac{1}{2\nu}\int_0^t e^{-k(t-\tau)}\Big(\Vert\hvec(\tau)\Vert^2_{\Vdiv'}+2\nu l\Big)d\tau\nonumber\\
&\leq e^k\sup_{s\in(0,1)}\mathcal{E}(\wvec(s))e^{-kt}+K^2,
\qquad\forall t\geq 1,
\label{n1}
\end{align}
where
$$K^2=\frac{l}{k}+\frac{1}{2\nu(1-e^{-k})}\Vert\hvec_0\Vert_{L^2_{tb}(0,\infty;\Vdiv')}^2.$$
Now, observe that due to \eqref{w2} we have
\begin{align}
&\mathcal{E}(\wvec)\geq c_6\Big(\Vert\uvec\Vert^2+\Vert\dvec\Vert_{\V}^2+\int_{\Omega}W(\dvec)\Big)-c_7,
\label{n2}
\end{align}
and
\begin{align}
&\sup_{s\in(0,1)}\mathcal{E}(\wvec(s))\leq\frac{1}{2}\Vert \uvec\Vert_{L^{\infty}(0,1;\Gdiv)}^2
+\frac{1}{2}\Vert\nabla\dvec\Vert_{L^{\infty}(0,1;L^2(\Omega)^{3\times 3})}^2
+\sup_{s\in(0,1)}\int_{\Omega}W(\dvec(s))\nonumber\\
&\leq c \rho_{\mathcal{W}_{b}^+}^2(\wvec,\boldsymbol{0}),\qquad\forall\wvec=[\uvec,\dvec]\in\mathcal{W}_{b}^+.
\label{n3}
\end{align}
 Henceforth in this proof we shall denote by $c$ a nonnegative
constant, which may vary even within the same line,
that possibly depends on $W$, $\Omega$ and $\nu$, but is independent of
$\wvec$ and $\hvec_0$.

By combining \eqref{n2} and \eqref{n3} with \eqref{n1} we get
\begin{align}
\Vert\uvec(t)\Vert+\Vert\dvec(t)\Vert_{\V}+\Big(\int_{\Omega}W(\dvec(t))\Big)^{1/2}
\leq c \rho_{\mathcal{W}_{b}^+}(\wvec,\boldsymbol{0})e^{-\frac{k}{2}t}
+cK+c,\qquad\forall t\geq 1,
\label{n5}
\end{align}
and hence
\begin{align}
&\Vert T(t)\uvec\Vert_{L^{\infty}(0,\infty;\Gdiv)}
+\Vert T(t)\dvec\Vert_{L^{\infty}(0,\infty;\V)}
+\Big\Vert\int_{\Omega}W(T(t)\dvec)\Big\Vert_{L^{\infty}(0,\infty)}^{1/2}\nonumber\\
&\leq c \rho_{\mathcal{W}_{b}^+}(\wvec,\boldsymbol{0})e^{-\frac{k}{2}t}
+cK+c,\qquad\forall t\geq 1.
\label{n10}
\end{align}
From the energy inequality \eqref{eist} we have
\begin{align}
&\int_t^{t+1}\Big(\Vert-\Delta\dvec+\nabla_{\dvec}W(\dvec)\Vert^2+\frac{\nu}{2}\Vert\nabla\uvec\Vert^2\Big)d\tau
\nonumber\\
&\leq\mathcal{E}(\wvec(t))-\mathcal{E}(\wvec(t+1))
+\frac{1}{2\nu}\int_t^{t+1}\Vert\hvec(\tau)\Vert_{\Vdiv'}^2d\tau,
\label{n4}
\end{align}
for a.e. $t>0$.

Notice that, thanks to the convexity of $W_1$ and to the assumption \eqref{Wass2} on $W_2$, we have
\begin{align}
&\Vert-\Delta\dvec+\nabla_{\dvec}W(\dvec)\Vert^2\geq\frac{1}{2}\Vert-\Delta\dvec+\nabla_{\dvec}W_1(\dvec)\Vert^2
-\Vert\nabla_{\dvec}W_2(\dvec)\Vert^2
\nonumber\\
&\geq\frac{1}{4}\Vert-\Delta\dvec+\dvec\Vert^2-\frac{1}{2}\Vert\dvec\Vert^2
-\Vert\nabla_{\dvec}W_2(\dvec)\Vert^2
\nonumber\\
&\geq\frac{1}{4}\Vert-\Delta\dvec+\dvec\Vert^2-c\Vert\dvec\Vert^2-c,
\end{align}
and therefore \eqref{n4} and \eqref{n5} entail
\begin{align}
&\int_t^{t+1}\Big(\frac{1}{4}\Vert\dvec(\tau)\Vert_{H^2(\Omega)^3}^2+\frac{\nu}{2}\Vert\nabla\uvec(\tau)\Vert^2
\Big)d\tau
\leq c \rho_{\mathcal{W}_{b}^+}^2(\wvec,\boldsymbol{0})e^{-kt}
+cK^2+c,\qquad\forall t\geq 1,
\label{n6}
\end{align}
which implies that
\begin{align}
&\Vert T(t)\dvec\Vert_{L^2_{tb}(0,\infty;H^2(\Omega)^3)}+
\Vert T(t)\uvec\Vert_{L^2_{tb}(0,\infty;\Vdiv)}
\leq c \rho_{\mathcal{W}_{b}^+}(\wvec,\boldsymbol{0})e^{-\frac{k}{2}t}
+cK+c,\qquad\forall t\geq 1.
\label{n11}
\end{align}
Now, recall that, due to the interpolation inequality
\[L^{\infty}(0,M;L^2(\Omega))\cap L^2(0,M;L^6(\Omega))\subset L^4(0,M;L^3(\Omega))\]
and to the regularity property of the solution, we have that
$$\uvec\cdot\nabla\dvec,\quad\dvec\cdot\nabla\uvec\in L_{loc}^2([0,\infty);L^{3/2}(\Omega)^3),$$
and so
\begin{align}
&\Vert\uvec\cdot\nabla\dvec\Vert_{L^2(t,t+1;L^{3/2}(\Omega)^3)}
\leq\Vert\uvec\Vert_{L^4(t,t+1;L^3(\Omega)^3)}\Vert\nabla\dvec\Vert_{L^4(t,t+1;L^3(\Omega)^{3\times 3})}
\nonumber\\
&\leq c(\Vert\uvec\Vert_{L^{\infty}(t,t+1;\Gdiv)}+\Vert\uvec\Vert_{L^2(t,t+1;\Vdiv)})
(\Vert\dvec\Vert_{L^{\infty}(t,t+1;\V)}
+\Vert\dvec\Vert_{L^2(t,t+1;H^2(\Omega)^3)}),
\nonumber
\end{align}
and
\begin{align}
&\Vert\dvec\cdot\nabla\uvec\Vert_{L^2(t,t+1;L^{3/2}(\Omega)^3)}
\leq c\Vert\dvec\Vert_{L^{\infty}(t,t+1;\V)}\Vert\uvec\Vert_{L^2(t,t+1;\Vdiv)}.
\nonumber
\end{align}
By using \eqref{n5} and \eqref{n6} we hence get
\begin{align}
&\Vert\uvec\cdot\nabla\dvec\Vert_{L^2(t,t+1;L^{3/2}(\Omega)^3)}
+\Vert\dvec\cdot\nabla\uvec\Vert_{L^2(t,t+1;L^{3/2}(\Omega)^3)}\leq
c \rho_{\mathcal{W}_{b}^+}^2(\wvec,\boldsymbol{0})e^{-kt}
+cK^2+c,
\label{n7}
\end{align}
for all $t\geq 1$. Furthermore, from \eqref{n4} and \eqref{n5} we have
\begin{align}
&\Vert-\Delta\dvec+\nabla W(\dvec)\Vert_{L^2(t,t+1;L^2(\Omega)^3)}
\leq c \rho_{\mathcal{W}_{b}^+}(\wvec,\boldsymbol{0})e^{-\frac{k}{2}t}
+cK+c,\qquad\forall t\geq 1.
\label{n8}
\end{align}
Therefore, by using \eqref{sy2}, \eqref{n7} and \eqref{n8} we obtain
\begin{align}
&\Vert\dvec_t\Vert_{L^2(t,t+1;L^{3/2}(\Omega)^3)}
\leq c \rho_{\mathcal{W}_{b}^+}^2(\wvec,\boldsymbol{0})e^{-\frac{k}{2}t}
+cK^2+c,\qquad\forall t\geq 1,
\nonumber
\end{align}
and from this last inequality
\begin{align}
&\Vert T(t)\dvec_t\Vert_{L^2_{tb}(0,\infty;L^{3/2}(\Omega)^3)}
\leq c \rho_{\mathcal{W}_{b}^+}^2(\wvec,\boldsymbol{0})e^{-\frac{k}{2}t}
+cK^2+c,\qquad\forall t\geq 1.
\label{n12}
\end{align}

Finally, observe that the regularity properties of the solution also entail
\begin{align}
&\uvec\otimes\uvec,\quad\nabla\dvec\odot\nabla\dvec,\quad(\Delta\dvec-\nabla_{\dvec}W(\dvec))\otimes\dvec
\in L^2_{loc}([0,\infty);L^{3/2}(\Omega)^{3\times 3}),
\label{consreg}
\end{align}
and we have
\begin{align}
&\Vert\uvec\otimes\uvec\Vert_{L^2(t,t+1;L^{3/2}(\Omega)^{3\times 3})}
\leq\Vert\uvec\Vert_{L^4(t,t+1;L^3(\Omega)^3)}^2
\leq c(\Vert\uvec\Vert_{L^{\infty}(t,t+1;\Gdiv)}+\Vert\uvec\Vert_{L^2(t,t+1;\Vdiv)})^2,
\end{align}
\begin{align}\nonumber
\Vert\nabla\dvec\odot\nabla\dvec\Vert_{L^2(t,t+1;L^{3/2}(\Omega)^{3\times 3})}
&\leq\Vert\nabla\dvec\Vert_{L^4(t,t+1;L^3(\Omega)^3)}^2\\
\nonumber
&
\leq c(\Vert\dvec\Vert_{L^{\infty}(t,t+1;\V)}
+\Vert\dvec\Vert_{L^2(t,t+1;H^2(\Omega)^3)})^2,
\end{align}
and
\begin{align}\nonumber
\Vert(\Delta\dvec-\nabla_{\dvec}W(\dvec))&\otimes\dvec\Vert_{L^2(t,t+1;L^{3/2}(\Omega)^{3\times 3})}\\
\nonumber
&\leq\Vert(\Delta\dvec-\nabla_{\dvec}W(\dvec))\Vert_{L^2(t,t+1;L^2(\Omega)^3)}
\Vert\dvec\Vert_{L^{\infty}(t,t+1;\V)}.
\end{align}
Therefore, from the variational formulation \eqref{varform} for the equation of the velocity we get
\begin{align}
&\Vert\uvec_t\Vert_{L^2(t,t+1;W^{-1,3/2}(\Omega)^3)}
\leq\Vert\uvec\otimes\uvec\Vert_{L^2(t,t+1;L^{3/2}(\Omega)^{3\times 3})}+
c\mu\Vert\uvec\Vert_{L^2(t,t+1;\Vdiv)}\nonumber\\
&+\Vert\nabla\dvec\odot\nabla\dvec\Vert_{L^2(t,t+1;L^{3/2}(\Omega)^{3\times 3})}
+\Vert(\Delta\dvec-\nabla_{\dvec}W(\dvec))\otimes\dvec\Vert_{L^2(t,t+1;L^{3/2}(\Omega)^{3\times 3})}
\nonumber\\
&+\Vert\dvec\otimes(\Delta\dvec-\nabla_{\dvec}W(\dvec))\Vert_{L^2(t,t+1;L^{3/2}(\Omega)^{3\times 3})}
+c\Vert\hvec\Vert_{L^2(t,t+1;\Vdiv')}.
\label{n9}
\end{align}
By combining \eqref{n9} with the previous estimates and with \eqref{n5}, \eqref{n6} and with \eqref{n8}, we easily
obtain
\begin{align}
&\Vert\uvec_t\Vert_{L^2(t,t+1;W^{-1,3/2}(\Omega)^3)}\leq
c \rho_{\mathcal{W}_{b}^+}^2(\wvec,\boldsymbol{0})e^{-\frac{k}{2}t}
+cK^2+c,\qquad\forall t\geq 1,
\nonumber
\end{align}
whence
\begin{align}
&\Vert T(t)\uvec_t\Vert_{L^2_{tb}(0,\infty;W^{-1,3/2}(\Omega)^3)}\leq
c \rho_{\mathcal{W}_{b}^+}^2(\wvec,\boldsymbol{0})e^{-\frac{k}{2}t}
+cK^2+c,\qquad\forall t\geq 1.
\label{n13}
\end{align}
Collecting now \eqref{n10}, \eqref{n11}, \eqref{n12} and \eqref{n13} we deduce that
$\mathcal{K}_{\hvec}^+\subset\mathcal{W}_b^+$ and that
\eqref{dissipativeest} holds
with $\Lambda_0=cK^2+c$.
\end{proof}

The next proposition states that
$\{\mathcal{K}_{\hvec}^M\}_{\hvec\in L^2(0,M;\Vdiv')}$
is $(\Theta_M,L^2(0,M;\Vdiv'))-$closed
and $\{\mathcal{K}_{\hvec}^M\}_{\hvec\in L^2(0,M;\Gdiv)}$
is $(\Theta_M,L^2_w(0,M;\Gdiv))-$closed,
for every $M>0$,
under the further assumption {\bf (W2)} on $W$.

\begin{prop}
\label{closprop}
Suppose that {\bf (W1)} and {\bf (W2)} hold. Let
$\wvec_m:=[\uvec_m,\dvec_m]\in\mathcal{K}_{\hvec_m}^M$ be
such that $\{\wvec_m\}$ converges to $\wvec:=[\uvec,\dvec]$ in $\Theta_M$ and
$\{\hvec_m\}$ be such that one of the following convergence assumptions holds
\begin{description}
\item[(a)] $\hvec_m\in L^2(0,M;\Vdiv')$ and $\hvec_m\to\hvec$, strongly in $ L^2(0,M;\Vdiv')$,
\item[(b)] $\hvec_m\in L^2(0,M;\Gdiv)$ and $\hvec_m\rightharpoonup\hvec$, weakly in $ L^2(0,M;\Gdiv)$.
\end{description}
Then $\wvec\in\mathcal{K}_{\hvec}^M$.
\end{prop}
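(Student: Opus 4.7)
First I would extract strong compactness from the weak convergence in $\Theta_M$. Since $\wvec_m=[\uvec_m,\dvec_m]$ is uniformly bounded in $\mathcal{W}_M$, Aubin--Lions yields, up to a (non-relabeled) subsequence,
\begin{align*}
\uvec_m &\to \uvec \quad \text{strongly in } L^2(0,M;\Gdiv),\\
\dvec_m &\to \dvec \quad \text{strongly in } L^2(0,M;H^{2-\epsilon}(\Omega)^3)\cap C([0,M];H^{1-\epsilon}(\Omega)^3),
\end{align*}
for every small $\epsilon>0$. Since $H^{2-\epsilon}(\Omega)\hookrightarrow L^\infty(\Omega)$ in dimension three for $\epsilon$ small, this gives $\dvec_m\to\dvec$ in $L^2(0,M;L^\infty(\Omega)^3)$ and pointwise a.e., while $\nabla\dvec_m\to\nabla\dvec$ strongly in $L^2(0,M;L^p(\Omega)^{3\times 3})$ for some $p>3$. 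The homogeneous Neumann trace and the regularity memberships defining $\mathcal{W}_M$ are preserved under these weak and strong limits.

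Next I would pass to the limit in \eqref{sy2} and in the variational formulation \eqref{varform}. The convective tensors $\uvec_m\otimes\uvec_m$, $\nabla\dvec_m\odot\nabla\dvec_m$, $\uvec_m\cdot\nabla\dvec_m$, $\dvec_m\cdot\nabla\uvec_m$ converge in $L^1$ on compact subsets by combining the strong convergences above with the remaining weak ones. Assumption \textbf{(W2)}, together with $\V\hookrightarrow L^6$, gives $|\nabla_{\dvec}W(\dvec)|\leq C(1+|\dvec|^5)$ and a uniform $L^\infty(0,M;L^{6/5}(\Omega)^3)$ bound on $\nabla_{\dvec}W(\dvec_m)$; coupled with the a.e. pointwise convergence of $\dvec_m$, Vitali's theorem yields $\nabla_{\dvec}W(\dvec_m)\to\nabla_{\dvec}W(\dvec)$ strongly in $L^r(0,M;L^{6/5}(\Omega)^3)$ for every $r<\infty$. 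The stretching stress terms $(\Delta\dvec_m-\nabla_{\dvec}W(\dvec_m))\otimes\dvec_m$ and $\dvec_m\otimes(\Delta\dvec_m-\nabla_{\dvec}W(\dvec_m))$ are then handled by a weak-times-strong pairing between the weakly $L^2(0,M;L^2(\Omega)^3)$-convergent chemical potential $-\Delta\dvec_m+\nabla_{\dvec}W(\dvec_m)$ and the strongly $L^2(0,M;L^\infty(\Omega)^3)$-convergent factor $\dvec_m$. The forcing contribution passes trivially under either \textbf{(a)} or \textbf{(b)}.

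The delicate point is the energy inequality \eqref{eist}. For a fixed $t$, the kinetic and elastic parts are weakly lower semicontinuous because $\uvec_m\in C_w([0,M];\Gdiv)$ and $\dvec_m\in C_w([0,M];\V)$; the potential term satisfies $\int_\Omega W(\dvec(t))\leq\liminf_m\int_\Omega W(\dvec_m(t))$ by Fatou (extracting a further subsequence to have a.e. convergence of $\dvec_m(t)$). The dissipation integral on the left is lower semicontinuous via the weak $L^2(0,M;L^2(\Omega)^3)$ convergences of $-\Delta\dvec_m+\nabla_{\dvec}W(\dvec_m)$ and of $\nabla\uvec_m$. The forcing integral on the right converges as a weak-times-strong pairing in case \textbf{(a)} (using strong $\Vdiv'$ convergence of $\hvec_m$ against weak $\Vdiv$ convergence of $\uvec_m$) and in case \textbf{(b)} (using weak $\Gdiv$ convergence of $\hvec_m$ against strong $L^2(0,M;\Gdiv)$ convergence of $\uvec_m$). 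Finally, $\mathcal{E}(\wvec_m(s))$ is treated first on a countable dense set of $s$ along a diagonal subsequence and then extended to almost every $s\in(0,M)$ by the lower semicontinuity of $\mathcal{E}(\wvec(\cdot))$, which is granted since $W$ is a $C^1$ quadratic perturbation of a convex function and $\wvec\in C_w([0,M];\Gdiv\times\V)$. The main obstacle is precisely this last step: the ``bad'' $s$-null set depends on $m$, so the argument hinges on the lower semicontinuity of $\mathcal{E}(\wvec(\cdot))$; in addition, \textbf{(W2)} is indispensable, since without a polynomial bound on $W$ one could not apply Vitali's theorem to $\nabla_{\dvec}W(\dvec_m)$ nor obtain the $L^1$-Fatou estimate on $\int_\Omega W(\dvec_m(t))$.
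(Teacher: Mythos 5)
Your overall architecture (Aubin--Lions compactness, passage to the limit in \eqref{sy2} and \eqref{varform}, lower semicontinuity plus Fatou on the left of \eqref{eist}, a.e.-$s$ convergence of $\mathcal{E}(\wvec_m(s))$ via the embedding into $L^2(0,M;C(\overline{\Omega})^3)$, and the weak-times-strong pairing for the forcing term in cases \textbf{(a)} and \textbf{(b)}) matches the paper's proof. But there is a genuine gap at the central estimate. You invoke ``the weakly $L^2(0,M;L^2(\Omega)^3)$-convergent chemical potential $-\Delta\dvec_m+\nabla_{\dvec}W(\dvec_m)$'' both to pass to the limit in the stretching stress and to get lower semicontinuity of the dissipation integral $\int_s^t\Vert-\Delta\dvec_m+\nabla_{\dvec}W(\dvec_m)\Vert^2\,d\tau$, yet you never establish a uniform $L^2(0,M;L^2(\Omega)^3)$ bound on this quantity. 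Your own route produces only an $L^{\infty}(0,M;L^{6/5}(\Omega)^3)$ bound on $\nabla_{\dvec}W(\dvec_m)$, which is far too weak: it does not give weak $L^2_tL^2_x$ convergence, it does not make the dissipation term lower semicontinuous, and it does not even place $(\Delta\dvec_m-\nabla_{\dvec}W(\dvec_m))\otimes\dvec_m$ in $L^2(0,M;L^{3/2})$ as required to test against $\nabla\varphi\in L^3$. The paper obtains precisely this bound (its display \eqref{H2cont}) \emph{from the energy inequality \eqref{eim} itself}, after first using \textbf{(W2)} and the uniform $L^{\infty}(0,M;L^6)$ bound on $\dvec_m$ to get $|\mathcal{E}(\wvec_m(s))|\leq c$; this is the one step where \textbf{(W2)} is truly indispensable, and it is missing from your argument. (One could alternatively salvage your route by upgrading to an $L^2(0,M;L^{10})$ bound on $\dvec_m$ via Gagliardo--Nirenberg, but you would still need the quintic gradient bound, see below.)

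Two further points. First, the implication ``\textbf{(W2)} gives $|\nabla_{\dvec}W(\dvec)|\leq C(1+|\dvec|^5)$'' is false for a general $C^2$ function: an upper bound on $W$ does not bound $\nabla_{\dvec}W$. It can be rescued here using the convexity of $W_1$ (the subgradient inequality at scale $t=|\dvec|$) together with the global Lipschitz continuity of $\nabla_{\dvec}W_2$ from \eqref{Wass2}, but this must be said; the paper avoids the issue entirely by identifying the weak $L^2_tL^2_x$ limit of $\nabla_{\dvec}W(\dvec_m)$ through the strong convergence $\dvec_m\to\dvec$ in $L^2(0,M;\V)$ and a.e.\ convergence. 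Second, your treatment of $\mathcal{E}(\wvec_m(s))$ on the right-hand side via ``a countable dense set of $s$ and a diagonal subsequence, then lower semicontinuity of $\mathcal{E}(\wvec(\cdot))$'' is the wrong mechanism: at a \emph{fixed} $s$ you only have weak convergence of $\wvec_m(s)$, which gives lower semicontinuity in the wrong direction for a right-hand side, and lower semicontinuity in $s$ of the limit energy does not propagate an upper bound to new values of $s$. What actually works (and is what the paper does) is genuine convergence $\mathcal{E}(\wvec_m(s))\to\mathcal{E}(\wvec(s))$ for a.e.\ $s$, obtained from the strong convergences $\uvec_m\to\uvec$ in $L^2(0,M;\Gdiv)$ and $\dvec_m\to\dvec$ in $L^2(0,M;C(\overline{\Omega})^3)$, the ``bad'' sets being handled because a countable union of null sets is null.
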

\color{black}
\begin{proof}
Since $\wvec_m=[\uvec_m,\dvec_m]\in\mathcal{K}_{\hvec_m}^M$, then, every weak solution $\wvec_m$
is such that: (i) the regularity properties \eqref{reg1}-\eqref{reg4} hold for
each solution $\wvec_m$, (ii)
the weak formulation \eqref{varform} for $\uvec_m$ corresponding to the
external force $\hvec_m$
and \eqref{sy2}, \eqref{sy5} for $\dvec_m$
are satisfied, and (iii) the energy inequality
\begin{align}
&\mathcal{E}(\wvec_m(t))+\int_s^t\Big(\Vert-\Delta\dvec_m+\nabla_{\dvec}W(\dvec_m)\Vert^2
+\nu\Vert\nabla\uvec_m\Vert^2\Big)d\tau
\leq \mathcal{E}(\wvec_m(s))+\int_s^t \langle\hvec_m,\uvec_m\rangle d\tau
\label{eim}
\end{align}
holds for every $m\in\mathbb{N}$, for all $t$ and a.e. $s$ with $t\geq s$ and $s,t\in[0,M]$.
The weak convergences \eqref{wconv1}-\eqref{wconv4} imply that the sequence $\{\uvec_m\}$ is bounded
in $L^{\infty}(0,M;\Gdiv)$, the sequence $\{\dvec_m\}$ is bounded in $L^{\infty}(0,M;\V)$ and hence
also in $ L^{\infty}(0,M;L^6(\Omega)^3)$. The growth assumption {\bf (W2)} then entails
$$|\mathcal{E}(\wvec_m(s))|\leq c,$$
for every $m$ and a.e. $s\in[0,M]$.
Therefore, by using \eqref{eim} and the convergence assumption for the sequence $\{\hvec_m\}$ we deduce
that
\begin{align}
\Vert-\Delta\dvec_m+\nabla_{d}W(\dvec_m)\Vert_{L^2(0,M;L^2(\Omega)^3)}\leq c.
\label{H2cont}
\end{align}
Since, by \eqref{conv3} the sequence $\{-\Delta\dvec_m\}$ is bounded in $L^2(0,M;L^2(\Omega)^3)$,
then we infer that $\{\nabla_{d}W(\dvec_m)\}$ is bounded in $L^2(0,M;L^2(\Omega)^3)$ as well
and therefore, up to a subsequence, $\nabla_{d}W(\dvec_m)\rightharpoonup\boldsymbol{G}$ weakly in $L^2(0,M;L^2(\Omega)^3)$.
Since we also have, as a consequence of the convergences \eqref{conv3}, \eqref{wconv4}
and of Aubin-Lions lemma, that $\dvec_m\to\dvec$ strongly in $L^2(0,M;\V)$,
we deduce that $\boldsymbol G=\nabla_{d}W(\dvec)$.

It is easy to check that $\wvec=[\uvec,\dvec]$ is a weak solution corresponding to the external force $\hvec$.
Indeed, we can take $\varphi\in\mathcal{D}(\Omega)^3$ with div$\varphi=0$, write the variational formulation
\eqref{varform} for $\uvec_m$ and equation \eqref{sy2} for $\dvec_m$
and pass to the limit as $m\to\infty$.
Then we can use the weak convergences \eqref{wconv1}-\eqref{wconv4} which imply the strong
convergences $\uvec_m\to\uvec$ in $L^2(0,M;\Gdiv)$ (and hence $\uvec_m\otimes\uvec_m\to\uvec\otimes\uvec$
in $L^1(0,M;L^1(\Omega)^{3\times 3})$), $\nabla\dvec_m\to\nabla\dvec$ in $L^2(0,M;L^2(\Omega)^{3\times 3})$
and also the assumed convergence for the sequence
$\{\hvec_m\}$ to conclude that $\uvec$ satisfies the variational formulation \eqref{varform}
with external force $\hvec$ for every test function
$\varphi\in\mathcal{D}(\Omega)^3$ with div$\varphi=0$, and that $\dvec$ satisfies \eqref{sy2}.
By density and \eqref{consreg}, the weak
formulation for $\uvec$ is satisfied also for every
$\varphi\in W^{1,3}_0(\Omega)$ with div$\varphi=0$.

It remains to prove that $\wvec$ satisfies the energy inequality \eqref{eist} with external force
$\hvec$ on $[0,M]$.  Let us first assume the convergence
condition (a) for the sequence $\{\hvec_m\}$.
\color{black}
We then consider \eqref{eim}, pass to the limit as $m\to\infty$,
use the strong and weak convergences for the sequences $\{\uvec_m\}$, $\{\dvec_m\}$ and,
on the left hand side of the inequality, the lower semicontinuity of the $L^2(0,M;L^2(\Omega))-$
norm and Fatou's lemma for the nonlinear integral term. On the right hand side we use the fact that,
since, by Aubin-Lions lemma, we have the compact and continuous embeddings
\begin{align}
&L^2(0,M;H^2(\Omega)^3)\cap H^1(0,M;L^{3/2}(\Omega)^3)\hookrightarrow\hookrightarrow
L^2(0,M;H^{2-\delta}(\Omega)^3)\hookrightarrow L^2(0,M;C(\overline{\Omega})^3),
\label{AubLio}
\end{align}
for $0<\delta<1/2$, then, up to a subsequence, we have $\dvec_m(s)\to\dvec(s)$ in $C(\overline{\Omega})^3$
for a.e. $s\in[0,M]$ and therefore
$$\int_{\Omega}W(\dvec_m(s))\to\int_{\Omega}W(\dvec(s)),\qquad\mbox{a.e. }s\in[0,M].$$
We hence recover \eqref{eist} for $\wvec$ with forcing term $\hvec$.

On the other hand, if (b) holds, we can argue as in \cite[Chap. XV, Prop. 1.1]{CVbook}
and exploit the strong convergence $\uvec_m\to\uvec$ in $L^2(0,M;\Gdiv)$ which implies
$$\int_s^t\langle \hvec_m(\tau),\uvec_m(\tau)\rangle d\tau\to\int_s^t \langle \hvec(\tau),\uvec(\tau)\rangle d\tau,
\qquad\mbox{as }m\to\infty.$$

In both cases (a) and (b)
we therefore conclude that $\wvec\in\mathcal{K}_{\hvec}^M$.
\color{black}

\end{proof}

\begin{proof}[Proof of Theorem \ref{main}]
By Proposition \ref{dissprop} the ball
$$B_{\mathcal{W}_b^+}(\boldsymbol 0,2\Lambda_0)
:=\{\wvec\in\mathcal{W}_b^+:\rho_{\mathcal{W}_b^+}(\wvec,\boldsymbol 0)\leq 2\Lambda_0\}$$
is a uniformly (w.r.t. $\hvec\in\mathcal{H}_+(\hvec_0)$)
absorbing set for the family $\{\mathcal{K}_{\hvec}^+\}_{\hvec\in\mathcal{H}_+(\hvec_0)}$.
Such ball is also precompact in $\Theta_{loc}^+$.
The first part of Theorem \ref{trajattract} entails the existence of
the uniform (w.r.t. $\hvec\in\mathcal{H}_+(\hvec_0)$)
trajectory attractor $\mathcal{A}_{\mathcal{H}_+(\hvec_0)}
\subset B_{\mathcal{W}_b^+}(\boldsymbol 0,2\Lambda_0)$. This
set is compact in $\Theta_{loc}^+$ and,
since $T(t)$ is obviously continuous in $\Theta_{loc}^+$,
$\mathcal{A}_{\mathcal{H}_+(\hvec_0)}$ is also strictly invariant.

Furthermore, assuming also {\bf (W2)} and that $\hvec_0$ is tr.-c. in $L^2_{loc}([0,\infty);\Vdiv')$ or
$\hvec_0\in L^2_{tb}(0,\infty;\Gdiv)$, then
\color{black}
Proposition \ref{closprop} implies that
$\{\mathcal{K}_{\hvec}^+\}_{\hvec\in\mathcal{H}_+(\hvec_0)}$
is $(\Theta_{loc}^+,\mathcal{H}_+(\hvec_0))-$closed
when {\bf (W2)} holds true.
Since $\mathcal{H}_+(\hvec_0)$ is a compact metric space,
then $\mathcal{K}_{\mathcal{H}_+(\hvec_0)}^+$ is closed in $\Theta_{loc}^+$
and the second part of Theorem \ref{trajattract}
allows to conclude the proof.
\end{proof}


\subsection{The trajectory attractor for a polynomial potential $W$}
\label{esitra2}

The results on the existence of the trajectory attractor and on its closure
property can be recovered under alternative functional setting and assumptions
on the potential.
Indeed, let $p\geq 2$ and for every $M>0$ introduce the space
\begin{align}
\mathcal{W}_{p,M}:=&\Big\{[\uvec,\dvec]\in L^{\infty}(0,M;\Gdiv\times(\V\cap L^p(\Omega)^3))\cap
L^2(0,M;\Vdiv\times H^2(\Omega)^3):\nonumber\\
&
\uvec_t\in L^2(0,M;W^{-1,3/2}(\Omega)^3),\dvec_t\in L^2(0,M;L^{3/2}(\Omega)^3)
\Big\}.
\label{FMp}
\end{align}
The topology $\Theta_{p,M}$ on $\mathcal{W}_{p,M}$ is now chosen to induce the following notion
of weak convergence:
a sequence $\{[\uvec_m,\dvec_m]\}\subset\mathcal{W}_{p,M}$
is said to converge to $[\uvec,\dvec]\in\mathcal{W}_M$ in $\Theta_{p,M}$
if \eqref{wconv1}--\eqref{wconv4} hold and if in addition
\begin{align}
&\dvec_m\rightharpoonup\dvec\qquad\mbox{weakly}^{\ast}\mbox{ in }L^{\infty}(0,M;L^p(\Omega)^3).
\label{conv5}
\end{align}
Then define
\begin{align}
\mathcal{W}_{p,loc}^+:=&\Big\{[\uvec,\dvec]\in L_{loc}^{\infty}([0,\infty);\Gdiv\times(\V\cap L^p(\Omega)^3))\cap
L_{loc}^2([0,\infty);\Vdiv\times H^2(\Omega)^3):\nonumber\\
&
\uvec_t\in L_{loc}^2([0,\infty);W^{-1,3/2}(\Omega)^3),\dvec_t\in L_{loc}^2([0,\infty);L^{3/2}(\Omega)^3)
\Big\},
\label{Fploc}
\end{align}
endowed with its inductive limit topology $\Theta_{p,loc}^+$, and
\begin{align}
\mathcal{W}_{p,b}^+:=&\Big\{[\uvec,\dvec]\in L^{\infty}(0,\infty;\Gdiv\times(\V\cap L^p(\Omega)^3))\cap
L_{tb}^2(0,\infty;\Vdiv\times H^2(\Omega)^3):\nonumber\\
&
\uvec_t\in L_{tb}^2(0,\infty;W^{-1,3/2}(\Omega)^3),\dvec_t\in L_{tb}^2(0,\infty;L^{3/2}(\Omega)^3)\Big\},
\label{Fpb}
\end{align}
which is now a Banach space with the norm
\begin{align}
&\Vert\wvec_1-\wvec_2\Vert_{\mathcal{W}_{p,b}^+}:=
\Vert\wvec_1-\wvec_2\Vert_{L^{\infty}(0,\infty;\Gdiv\times(\V\cap L^p(\Omega)^3))}
+\Vert\wvec_1-\wvec_2\Vert_{L_{tb}^2(0,\infty;\Vdiv\times H^2(\Omega)^3)}\\
&+\Vert(\uvec_1)_t-(\uvec_2)_t\Vert_{L_{tb}^2(0,\infty;W^{-1,3/2}(\Omega)^3)}
+\Vert(\dvec_1)_t-(\dvec_2)_t\Vert_{L_{tb}^2(0,\infty;L^{3/2}(\Omega)^3)},
\label{Fpbnorm}
\end{align}
for every $\wvec_1=[\uvec_1,\dvec_1],\wvec_2=[\uvec_2,\dvec_2]\in\mathcal{W}_{p,b}^+$.
On the potential $W$ we now need the following assumption
\begin{description}
\item[(W3)]There exist two positive constants $C_1$, $C_2$ and $p\in (2,+\infty)$ such that
\begin{align}
C_1(|\dvec|^p-1)\leq W(\dvec)\leq C_2(1+|\dvec|^p),\qquad\forall\dvec\in\mathbb{R}^3,
\label{altassW}
\end{align}
\end{description}
\begin{oss}{\upshape
Let us note that assumption {\bf (W3)} is satisfied with $p=4$ in the standard double-well potential case
$W(\dvec)=(|\dvec|^2-1)^2$.}
\end{oss}

For every $\hvec\in L^2_{loc}([0,\infty);\Vdiv')$ the trajectory space
$\mathcal{K}_{p,\hvec}^+$ can be defined exactly as in Definition \ref{trajdef}
with the additional requirement that
\begin{align}
\dvec\in L^{\infty}_{loc}([0,\infty);L^p(\Omega)^3).
\end{align}
Thanks to {\bf (W3)}, then, if assumptions \eqref{Wass1}, \eqref{Wass2}
are satisfied, Theorem \ref{existence} ensures that for every $\wvec_0=[\uvec_0,\dvec_0]$
such that $\uvec_0\in\Vdiv$, $\dvec_0\in\V\cap L^p(\Omega)^3$
and every $\hvec\in L^2_{loc}([0,\infty);\Vdiv')$ there exists a trajectory
$\wvec\in\mathcal{K}_{p,\hvec}^+$ such that $\wvec(0)=\wvec_0$.
Furthermore, the space $\mathcal{K}_{p,\hvec}^M$ of trajectories on the interval $[0,M]$
can be defined in an obvious way, as well as the united trajectory space
$$\mathcal{K}_{p,\mathcal{H}_+(\hvec_0)}^+:=\bigcup_{\hvec\in\mathcal{H}_+(\hvec_0)}\mathcal{K}_{p,\hvec}^+.$$

In place of {\bf (W1)} on the potential $W$ we shall therefore make the following assumption
\begin{description}
\item[{\bf (W1)}$^\ast$]
W satisfies \eqref{Wass1}, \eqref{Wass2},
and \eqref{w1}.
\end{description}

Instead of Theorem \ref{main} we can now prove the following
\begin{thm}
\label{main2}
Assume that {\bf (W1)}$^\ast$ and {\bf (W3)} hold
and that $\hvec_0\in L^2_{tb}(0,\infty;\Vdiv')$.
Then, $\{T(t)\}$ acting on $\mathcal{K}_{p,\mathcal{H}_+(\hvec_0)}^+$ possesses the uniform
(w.r.t. $\hvec\in\mathcal{H}_+(\hvec_0)$) trajectory attractor
$\mathcal{A}_{p,\mathcal{H}_+(\hvec_0)}$.
This set is strictly invariant, bounded in $\mathcal{W}_{p,b}^+$, compact in
$\Theta_{p,loc}^+$.
In addition, if $\hvec_0$ is tr.-c. in $L^2_{loc}([0,\infty);\Vdiv')$ or
$\hvec_0\in L^2_{tb}(0,\infty;\Gdiv)$, then
$\mathcal{K}_{p,\mathcal{H}_+(\hvec_0)}^+$ is closed in
$\Theta_{p,loc}^+$,
$\mathcal{A}_{p,\mathcal{H}_+(\hvec_0)}
\subset\mathcal{K}_{p,\mathcal{H}_+(\hvec_0)}^+$ and
$$\mathcal{A}_{p,\mathcal{H}_+(\hvec_0)}=\mathcal{A}_{p,\omega(\mathcal{H}_+(\hvec_0))}.$$
\color{black}
\end{thm}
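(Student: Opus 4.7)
The plan is to follow the two-step scheme of Theorem \ref{main}: produce analogues of Propositions \ref{dissprop} and \ref{closprop} tailored to the space $\mathcal{W}_{p,b}^+$ and to the weaker assumption {\bf (W1)}$^\ast$, and then apply Theorem \ref{trajattract}. The overall architecture is unchanged; what must be checked is that the missing coercivity \eqref{w2} can be replaced by {\bf (W3)}, and that the $L^\infty(L^p)$ requirement built into $\mathcal{W}_{p,b}^+$ is compatible with the energy estimate.

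For the dissipative estimate I would first observe that, although {\bf (W1)}$^\ast$ omits \eqref{w2}, a coercivity analogue is a consequence of {\bf (W3)}. Indeed, since $W_2\in C^{1,1}(\mathbb{R}^3)$ one has $|W_2(\dvec)|\leq c(1+|\dvec|^2)$, whence {\bf (W3)} gives
\begin{align*}
W_1(\dvec)=W(\dvec)-W_2(\dvec)\geq C_1|\dvec|^p-c(1+|\dvec|^2)\geq c'|\dvec|^{2+\delta}-c''
\end{align*}
for any $\delta\in(0,p-2]$, so the preliminary lemma still applies with the same structure. The Ball-type integral Gronwall argument in the proof of Proposition \ref{dissprop} then yields the exponential decay of $\mathcal{E}(\wvec(t))$. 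The key new ingredient is the lower bound $C_1(|\dvec|^p-1)\leq W(\dvec)$ from {\bf (W3)}, which gives
\begin{align*}
\Vert\dvec(t)\Vert_{L^p(\Omega)^3}^p\leq C_1^{-1}\int_\Omega W(\dvec(t))+|\Omega|,
\end{align*}
so that the decay of $\mathcal{E}$ directly controls the new $L^\infty(0,\infty;L^p(\Omega)^3)$ component of the norm on $\mathcal{W}_{p,b}^+$. All other estimates (on the $\Vdiv\times H^2$ norms, on $\uvec_t$ and $\dvec_t$) are imported verbatim from Proposition \ref{dissprop}, since they do not involve the $L^p$ control. This produces a uniformly absorbing ball $B_{\mathcal{W}_{p,b}^+}(\boldsymbol 0,2\Lambda_0)$, precompact in $\Theta_{p,loc}^+$.

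For the closure statement, under the additional hypothesis that $\hvec_0$ is tr.-c.\ in $L^2_{loc}([0,\infty);\Vdiv')$ or $\hvec_0\in L^2_{tb}(0,\infty;\Gdiv)$, I would follow Proposition \ref{closprop} line by line. The step that becomes easier here is the one where {\bf (W2)} was invoked in order to bound $|\mathcal{E}(\wvec_m(s))|$ uniformly in $m$: that bound is now automatic, because the definition of $\mathcal{K}_{p,\hvec}^+$ already demands $\dvec\in L^\infty_{loc}([0,\infty);L^p(\Omega)^3)$, and the upper bound in {\bf (W3)} gives $\int_\Omega W(\dvec_m(s))\leq C_2(|\Omega|+\Vert\dvec_m(s)\Vert_{L^p}^p)\leq c$. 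This is precisely why no extra growth restriction on $W$ beyond {\bf (W3)} is needed. The main technical point, exactly as in Proposition \ref{closprop}, is the identification of the weak $L^2(0,M;L^2)$-limit of $\nabla_{\dvec}W(\dvec_m)$: from the Aubin-Lions embedding \eqref{AubLio} one extracts a subsequence with $\dvec_m\to\dvec$ almost everywhere, and the uniform $L^\infty(L^p)$ bound together with the polynomial growth in {\bf (W3)} permits a Vitali-type passage to the limit, yielding $\nabla_{\dvec}W(\dvec_m)\rightharpoonup\nabla_{\dvec}W(\dvec)$. Once both propositions are in hand, the two parts of Theorem \ref{trajattract} deliver existence, strict invariance and compactness of $\mathcal{A}_{p,\mathcal{H}_+(\hvec_0)}$, the inclusion $\mathcal{A}_{p,\mathcal{H}_+(\hvec_0)}\subset\mathcal{K}_{p,\mathcal{H}_+(\hvec_0)}^+$ and the identity $\mathcal{A}_{p,\mathcal{H}_+(\hvec_0)}=\mathcal{A}_{p,\omega(\mathcal{H}_+(\hvec_0))}$.
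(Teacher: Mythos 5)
Your proposal is correct and follows essentially the same route as the paper: a dissipative estimate in the $\mathcal{W}_{p,b}^+$ metric obtained by rerunning Proposition \ref{dissprop} with the $L^p$ term controlled through the lower bound in {\bf (W3)}, a closure proposition in which the uniform bound on $\mathcal{E}(\wvec_m(s))$ comes for free from the $L^\infty(0,M;L^p(\Omega)^3)$ convergence built into $\Theta_{p,M}$ together with the upper bound in {\bf (W3)}, and then an application of Theorem \ref{trajattract}. Your explicit derivation of the coercivity \eqref{w2} from {\bf (W3)} and $W_2\in C^{1,1}$ is a detail the paper leaves implicit when it asserts that \eqref{prelimest} still holds, but it is the same argument.
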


Similarly to Theorem \ref{main}, Theorem \ref{main2} is a consequence of two propositions. The first one
concerns with a dissipative estimate, and the second one establishes the closure property of the space of trajectories.

\begin{prop}
Let {\bf (W1)}$^{\ast}$ and {\bf (W3)} be satisfied and assume that $\hvec_0\in L^2_{tb}(0,\infty;\Vdiv')$.
Then, for every $\hvec\in\mathcal{H}_+(\hvec_0)$ we have $\mathcal{K}_{p,\hvec}^+\subset\mathcal{W}_{p,b}^+$
and
\begin{align}
\Vert T(t)\wvec\Vert_{\mathcal{W}_{p,b}^+}\leq \Gamma(\Vert\wvec\Vert_{\mathcal{W}_{p,b}^+})e^{-\sigma t}
+\Gamma_0,\qquad\forall t\geq 1,
\label{diss2}
\end{align}
for every $\wvec\in\mathcal{K}_{p,\hvec}^+$. Here,
$\Gamma_0$, $\sigma$ and $\Gamma$ are two positive constants and a
monotone positive increasing function, respectively, (independent
of $\wvec$) that depend on $W$, $\Omega$, $\nu$ and $p$, with only
$\Gamma_0$ depending on the $L^2_{tb}(0,\infty;\Vdiv')-$norm of
$\hvec_0$. \label{dissipativeest2}
\end{prop}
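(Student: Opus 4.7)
The strategy is to mirror the proof of Proposition \ref{dissprop}, adapted to the $L^p$-framework carried by the space $\mathcal{W}_{p,b}^+$. The key structural change is that (w2) is dropped from {\bf (W1)}$^{\ast}$ but replaced by the polynomial growth (W3), which in fact supplies even better coercivity than (w2). The metric is now a genuine norm (no square root on an energy-type term), so the growth function $\Gamma$ need not be quadratic but rather of polynomial degree $p$ in the argument.

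The first step is to re-establish the preliminary inequality
\[
\Vert -\Delta\dvec + \nabla_{\dvec}W(\dvec)\Vert^{2}\geq \kappa\Vert\nabla\dvec\Vert^{2}+\eta\int_{\Omega}W(\dvec)-l
\]
under {\bf (W1)}$^{\ast}$ and {\bf (W3)}. Indeed, since $W_{2}\in C^{1,1}(\mathbb{R}^{3})$ has at most quadratic growth, the lower bound in (W3) together with $p>2$ gives $W_{1}(\dvec)=W(\dvec)-W_{2}(\dvec)\geq c_{3}|\dvec|^{2}-c_{4}$, i.e.\ the very consequence of (w2) that was actually used in the proof of the earlier lemma. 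With this in hand, the whole computation based on (w1), the convexity of $W_{1}$ and the identity (wd) carries over verbatim.

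The second step repeats the argument of Proposition~\ref{dissprop}: combining the preliminary estimate with the energy inequality (eist), Poincar\'e's inequality and Ball's integral Gronwall Lemma \cite[Lemma 1]{FG} yields
\[
\mathcal{E}(\wvec(t))\leq \mathcal{E}(\wvec(s))e^{-k(t-s)}+\frac{1}{2\nu}\int_{s}^{t}e^{-k(t-\tau)}\bigl(\Vert\hvec(\tau)\Vert_{\Vdiv'}^{2}+2\nu l\bigr)d\tau ,
\]
for all $t\geq s$ and a.e.\ $s\in(0,\infty)$, with $k=\min(\eta,2\kappa,\nu\lambda_{1})$. The $L^{p}$-framework enters here in two ways. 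On the one hand, the upper bound in (W3) gives
\[
\sup_{s\in(0,1)}\int_{\Omega}W(\dvec(s))\leq C_{2}\bigl(|\Omega|+\Vert\dvec\Vert_{L^{\infty}(0,1;L^{p})}^{p}\bigr),
\]
so that $\sup_{s\in(0,1)}\mathcal{E}(\wvec(s))\leq c(\Vert\wvec\Vert_{\mathcal{W}_{p,b}^{+}}^{2}+\Vert\wvec\Vert_{\mathcal{W}_{p,b}^{+}}^{p}+1)$. On the other hand, the lower bound in (W3) gives $\mathcal{E}(\wvec)\geq c_{6}(\Vert\uvec\Vert^{2}+\Vert\dvec\Vert_{\V}^{2}+\Vert\dvec\Vert_{L^{p}}^{p})-c_{7}$, so that pointwise control of $\mathcal{E}$ translates directly into control of the $L^{p}$-norm of $\dvec$. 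After taking the appropriate roots, these two facts yield an estimate of the form
\[
\Vert\uvec(t)\Vert+\Vert\dvec(t)\Vert_{\V}+\Vert\dvec(t)\Vert_{L^{p}}\leq \Gamma_{1}\bigl(\Vert\wvec\Vert_{\mathcal{W}_{p,b}^{+}}\bigr)e^{-\sigma t}+\Gamma_{0},\qquad\forall t\geq 1,
\]
for a monotone increasing $\Gamma_{1}$ of polynomial type (degree $p/2$) and $\sigma=k/p$, which provides the $L^{\infty}(0,\infty;\Gdiv\times(\V\cap L^{p}))$-component of the target bound (diss2).

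The third and final step reproduces, one by one, the $L_{tb}^{2}$-estimates (n6), (n7), (n8), (n12), (n13) of Proposition~\ref{dissprop}. Integrating the energy inequality over $[t,t+1]$ controls $\dvec$ in $L^{2}(t,t+1;H^{2})$, $\uvec$ in $L^{2}(t,t+1;\Vdiv)$ and $-\Delta\dvec+\nabla_{\dvec}W(\dvec)$ in $L^{2}(t,t+1;L^{2})$; the interpolation $L^{\infty}L^{2}\cap L^{2}L^{6}\hookrightarrow L^{4}L^{3}$ handles the convective and stretching terms $\uvec\cdot\nabla\dvec$ and $\dvec\cdot\nabla\uvec$ in equation (sy2), giving the bound on $\dvec_{t}$; and the variational formulation (varform) together with the regularity (consreg) delivers the bound on $\uvec_{t}$ in $L^{2}(t,t+1;W^{-1,3/2})$. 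No new ingredient is required; only the right-hand sides now involve $\Gamma_{1}$ and its square, which can be absorbed into a single monotone increasing function $\Gamma$ that majorizes all of them. Collecting all these bounds gives $\mathcal{K}_{p,\hvec}^{+}\subset \mathcal{W}_{p,b}^{+}$ and (diss2). The only real obstacle is the bookkeeping of the polynomial growth in $\Gamma$ through each estimate, ensuring in particular that at $t=1$ the bound is consistent with the norm of the initial portion of $\wvec$; this is where the upper bound in (W3) is essential.
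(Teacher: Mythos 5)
Your proposal is correct and follows essentially the same route as the paper: re-verify \eqref{prelimest} under {\bf (W1)}$^{\ast}$ and {\bf (W3)} (the paper merely asserts this; your observation that the lower bound in {\bf (W3)} with $p>2$ recovers the quadratic lower bound on $W_1$ is exactly the right justification), replace \eqref{n2}--\eqref{n3} by their $L^p$ analogues using the two-sided bound in {\bf (W3)}, and then rerun the remaining estimates of Proposition \ref{dissprop} verbatim, ending with $\sigma=k/p$ and a polynomial $\Gamma$ of degree $p$, matching the paper's $\Gamma(R)=c_{14}R^{p}$.
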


\begin{proof}
The proof is analogous to the one of Proposition \ref{dissprop} with some modifications.
Indeed, it is easy to check that estimate \eqref{prelimest} still holds and also that \eqref{ka}--\eqref{n1}
can be rewritten. On the other hand, \eqref{n2} and \eqref{n3} will now be replaced by
\begin{align}
&\mathcal{E}(\wvec)\geq c_6\Big(\Vert\uvec\Vert^2+\Vert\dvec\Vert_{\V}^2+\Vert\dvec\Vert_{L^p(\Omega)^3}^p\Big)-c_7,
\end{align}
\begin{align}
&\sup_{s\in(0,1)}\mathcal{E}(\wvec(s))\leq\frac{1}{2}\Vert \uvec\Vert_{L^{\infty}(0,1;\Gdiv)}^2
+\frac{1}{2}\Vert\nabla\dvec\Vert_{L^{\infty}(0,1;L^2(\Omega)^{3\times 3})}^2
+c_8\Vert\dvec\Vert_{L^{\infty}(0,1;L^p(\Omega)^3)}^p+c_9,
\end{align}
respectively. Here all nonnegative constants $c_i$ depend possibly on $W$, $\Omega$, $\nu$ and
$p$, but do not depend
neither on the solution $\wvec$, nor on $\hvec_0$.
Hence, in place of \eqref{n5} we get
\begin{align}
&\Vert\uvec\Vert^2+\Vert\dvec\Vert_{\V}^2+\Vert\dvec\Vert_{L^p(\Omega)^3}^p\nonumber\\
&
\leq c_{10}\Big(\Vert \uvec\Vert_{L^{\infty}(0,1;\Gdiv)}^2
+\Vert\nabla\dvec\Vert_{L^{\infty}(0,1;L^2(\Omega)^{3\times 3})}^2
+\Vert\dvec\Vert_{L^{\infty}(0,1;L^p(\Omega)^3)}^p\Big) e^{-kt}+K^2+c_{11},\,\forall t\geq 1,
\end{align}
the constant $K$ being given as in the proof of Proposition \ref{dissprop}.
Hence we have
\begin{align}
&\Vert T(t)\uvec\Vert_{L^{\infty}(0,\infty;\Gdiv)}+\Vert T(t)\dvec\Vert_{L^{\infty}(0,\infty;\V)}
+\Vert T(t)\dvec\Vert_{L^{\infty}(0,\infty;L^p(\Omega)^3)}\nonumber\\
&\leq c_{12}\Big(\Vert \uvec\Vert_{L^{\infty}(0,1;\Gdiv)}
+\Vert\nabla\dvec\Vert_{L^{\infty}(0,1;L^2(\Omega)^{3\times 3})}
+\Vert\dvec\Vert_{L^{\infty}(0,1;L^p(\Omega)^3)}^{p/2}\Big)e^{-\frac{k}{p}t}+K+c_{13},\quad\forall t\geq 1.
\label{mod1}
\end{align}
Once we have \eqref{mod1}, for the remaining part of the proof we can argue as in
the proof of Proposition \ref{dissprop}. At the end we arrive at \eqref{diss2}
with $\sigma=k/p$, $\Gamma(R)=c_{14}R^p$ and $\Gamma_0=c_{15}K^2+c_{16}$.
\end{proof}

\begin{prop}
Assume that {\bf (W1)}$^{\ast}$ and {\bf (W3)} are satisfied.
Let $\wvec_m:=[\uvec_m,\dvec_m]\in\mathcal{K}_{p,\hvec_m}^M$ be such that
$\{\wvec_m\}$ converges to $\wvec:=[\uvec,\dvec]$ in $\Theta_{p,M}$
and assume that $\{\hvec_m\}$ and $\hvec$ satisfy (a) or (b)
from Proposition \ref{closprop}.
\color{black}
Then $\wvec\in\mathcal{K}_{p,\hvec}^M$.
\label{closprop2}
\end{prop}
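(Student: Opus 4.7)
The plan is to follow the structure of the proof of Proposition \ref{closprop} verbatim, with only two substantive modifications due to the polynomial framework: the uniform bound on $\mathcal{E}(\wvec_m)$ must now be obtained from {\bf (W3)} rather than {\bf (W2)}, and one must verify the extra requirement $\dvec\in L^{\infty}(0,M;L^p(\Omega)^3)$. Each $\wvec_m=[\uvec_m,\dvec_m]\in\mathcal{K}_{p,\hvec_m}^M$ inherits the regularity \eqref{reg1}--\eqref{reg4}, satisfies the variational formulation \eqref{varform} for $\uvec_m$ with forcing $\hvec_m$, equation \eqref{sy2} and the Neumann condition \eqref{sy5} for $\dvec_m$, and the energy inequality \eqref{eim}. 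The topology $\Theta_{p,M}$ gives, on top of \eqref{wconv1}--\eqref{wconv4}, the weak-$\ast$ convergence $\dvec_m\rightharpoonup\dvec$ in $L^{\infty}(0,M;L^p(\Omega)^3)$.

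First I would establish the uniform bound on the energy. Since $\{\uvec_m\}$ is bounded in $L^{\infty}(0,M;\Gdiv)$, $\{\nabla\dvec_m\}$ in $L^{\infty}(0,M;L^2(\Omega)^{3\times 3})$ and, via \eqref{conv5} and {\bf (W3)}, $\{W(\dvec_m)\}$ in $L^{\infty}(0,M;L^1(\Omega))$, we obtain $|\mathcal{E}(\wvec_m(s))|\leq c$ for a.e. $s\in[0,M]$. Feeding this into \eqref{eim} and using the boundedness of $\{\hvec_m\}$ in $L^2(0,M;\Vdiv')$ (both in case (a) and (b)), we deduce the analogue of \eqref{H2cont}, which in turn yields, exactly as in the proof of Proposition \ref{closprop}, that $\nabla_{\dvec}W(\dvec_m)\rightharpoonup\boldsymbol{G}$ weakly in $L^2(0,M;L^2(\Omega)^3)$; the strong convergence $\dvec_m\to\dvec$ in $L^2(0,M;\V)$ provided by Aubin--Lions then identifies $\boldsymbol{G}=\nabla_{\dvec}W(\dvec)$.

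Passage to the limit in \eqref{varform} and in \eqref{sy2} is then identical to the corresponding step in Proposition \ref{closprop}: the strong convergences $\uvec_m\to\uvec$ in $L^2(0,M;\Gdiv)$ and $\nabla\dvec_m\to\nabla\dvec$ in $L^2(0,M;L^2(\Omega)^{3\times 3})$ handle the nonlinear convection, coupling and stretching terms, while the assumed convergence of $\{\hvec_m\}$ handles the forcing. Density and \eqref{consreg} extend the test functions from $\mathcal{D}(\Omega)^3$ to $W^{1,3}_0(\Omega)^3$ with zero divergence. The limit $\dvec$ belongs to $L^{\infty}(0,M;L^p(\Omega)^3)$ by weak-$\ast$ lower semicontinuity applied to the convergence \eqref{conv5}, so the extra regularity requirement in the definition of $\mathcal{K}_{p,\hvec}^M$ is automatically satisfied.

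The main point to be checked is the passage to the limit in the energy inequality, and in particular in the term $\int_{\Omega}W(\dvec_m(s))$. The Aubin--Lions embedding \eqref{AubLio} gives, up to a subsequence, $\dvec_m(s)\to\dvec(s)$ in $C(\overline{\Omega})^3$ for a.e. $s\in[0,M]$; combined with the continuity of $W$ and the uniform bound $\|\dvec_m(s)\|_{L^{\infty}(\Omega)^3}\leq c(s)$ provided by the same strong convergence, dominated convergence (or simply the uniform convergence on $\overline{\Omega}$) yields
\[
\int_{\Omega}W(\dvec_m(s))\to\int_{\Omega}W(\dvec(s)),\qquad\text{a.e. }s\in[0,M].
\]
The remaining terms in \eqref{eim} are handled exactly as in Proposition \ref{closprop}: weak lower semicontinuity of the $L^2$ norms and Fatou's lemma on the left-hand side, and on the right-hand side either the strong convergence $\hvec_m\to\hvec$ in $L^2(0,M;\Vdiv')$ in case (a), or the strong convergence $\uvec_m\to\uvec$ in $L^2(0,M;\Gdiv)$ combined with the weak convergence $\hvec_m\rightharpoonup\hvec$ in $L^2(0,M;\Gdiv)$ in case (b), as in \cite[Chap.~XV, Prop.~1.1]{CVbook}. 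The delicate step, which replaces the use of {\bf (W2)} in Proposition \ref{closprop}, is precisely the control of $\int_\Omega W(\dvec_m(s))$; once this is in place the conclusion $\wvec\in\mathcal{K}_{p,\hvec}^M$ follows.
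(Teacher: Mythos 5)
Your proposal is correct and follows essentially the same route as the paper: the paper's own proof simply states that the argument of Proposition \ref{closprop} carries over verbatim, with the sole change that the uniform bound $|\mathcal{E}(\wvec_m(s))|\leq c$ is now supplied by the weak$^{\ast}$ convergence \eqref{conv5} (together with {\bf (W3)}) instead of by {\bf (W2)}. Your additional remarks on identifying $\boldsymbol{G}=\nabla_{\dvec}W(\dvec)$, on verifying $\dvec\in L^{\infty}(0,M;L^p(\Omega)^3)$ by weak$^{\ast}$ lower semicontinuity, and on passing to the limit in $\int_{\Omega}W(\dvec_m(s))$ via \eqref{AubLio} are exactly the steps the paper implicitly reuses from Proposition \ref{closprop}.
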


\begin{proof}
The argument is the same as in the proof of Proposition \ref{closprop}.
The only difference is that now, once we write \eqref{eim}, the control
$|\mathcal{E}(\wvec_m(s))|\leq c$ for all $m$ and for a.e. $s\in(0,M)$
is ensured by the weak$^{\ast}$ convergence \eqref{conv5}.
\end{proof}


\section{The case of non-homogeneous Dirichlet boundary conditions for $\dvec$}
\label{sec:dir}

Let us now consider the physically relevant case of non-homogeneous Dirichlet boundary condition
for $\dvec$
\begin{align}
&\dvec|_{\Gamma}=\gvec,
\label{nonhomDir}
\end{align}
where the boundary datum $\gvec$ is supposed to be at least such that
$$\gvec\in H^1_{loc}([0,\infty);H^{-1/2}(\Gamma)^3)
\cap L^2_{loc}([0,\infty);H^{3/2}(\Gamma)^3)
\,.$$
This condition, together with \eqref{idatass}, \eqref{hdatass}
and with the compatibility condition
$$\dvec_0|_{\Gamma}=\gvec(0),$$
\color{black}
ensure the existence of a global
in time weak solution on $[0,\infty)$ corresponding to $\uvec_0$, $\dvec_0$ and $\gvec$, $\hvec$
with the regularity properties \eqref{reg1}--\eqref{reg4} and
satisfying the following energy inequality (cf. \cite{CR})
\begin{align}
&\mathcal{E}(\wvec(t))+\int_s^t\Big(\Vert-\Delta\dvec+\nabla_{\dvec}W(\dvec)\Vert^2
+\nu\Vert\nabla\uvec\Vert^2\Big)d\tau\nonumber\\
&\leq
\mathcal{E}(\wvec(s))+
\int_s^t\langle\gvec_t,\partial_{\boldsymbol n}\dvec\rangle_{H^{-1/2}(\Gamma)^3\times H^{1/2}(\Gamma)^3} d\tau
+\int_s^t \langle\hvec,\uvec\rangle d\tau,\qquad\wvec:=[\uvec,\dvec]
\label{eig}
\end{align}
for all $t\geq s$, for a.e. $s\in (0,\infty)$, including $s=0$,
where the energy functional $\mathcal{E}$ is the same as for the case of homogeneous Neumann
boundary condition for $\dvec$ (cf. Theorem \ref{existence}).

We can recover the result on the existence of the trajectory attractor also for the case
of non-homogeneous boundary condition for $\dvec$, assuming that either {\bf (W1)} or {\bf (W1)}$^*$ and {\bf (W3)} holds
for the potential $W$.
Indeed, if {\bf (W1)} holds, introducing the spaces $\mathcal{W}_M$, $\mathcal{W}_{loc}^+$ and $\mathcal{W}_b^+$
defined as in \eqref{FM}, \eqref{Floc} and \eqref{Fb}, respectively,
we can define the trajectory space $\mathcal{K}_{\gvec,\hvec}^+$ of system \eqref{sy1}--\eqref{sy4}, \eqref{nonhomDir} with
boundary datum $\gvec$ and external force $\hvec$
as the set of all weak solutions $\wvec=[\uvec,\dvec]$ on the time interval $[0,\infty)$ to this system
satisfying the
energy inequality \eqref{eig} for all $t\geq s$ and for a.a. $s\in(0,\infty)$.
On the other hand, if {\bf (W1)}$^*$  and {\bf (W3)} hold, the spaces $\mathcal{W}_{p,M}$, $\mathcal{W}_{p,loc}^+$ and $\mathcal{W}_{p,b}^+$,
defined as in \eqref{FMp}, \eqref{Fploc} and \eqref{Fpb}, respectively,
can be introduced and the trajectory space $\mathcal{K}_{p,\gvec,\hvec}^+$
can be defined in a similar way. The definition of the trajectory spaces
$\mathcal{K}_{\gvec,\hvec}^M$ and $\mathcal{K}_{p,\gvec,\hvec}^M$ on the bounded time interval $[0,M]$
is obvious.

Let us now introduce the symbol spaces for the Dirichlet datum $\gvec$
$$\Xi_M:=\{\gvec\in C([0,M];H^{3/2}(\Gamma)^3):\gvec_t\in L^2(0,M;H^{-1/2}(\Gamma)^3)\},$$
$$\Xi^+_{loc}:=\{\gvec\in C([0,\infty);H^{3/2}(\Gamma)^3):\gvec_t\in L^2_{loc}([0,\infty);H^{-1/2}(\Gamma)^3)\},$$
and also
\begin{align*}
&\Xi_{loc,w}^+:=\{\gvec\in C([0,\infty);H^{3/2}(\Gamma)^3):\gvec_t\in L^2_{loc,w}([0,\infty);H^{-1/2}(\Gamma)^3)\}.
\end{align*}
Take then
\begin{align}
&\gvec_0\mbox{ tr.-c. in }C([0,\infty);H^{3/2}(\Gamma)^3),\quad\partial_t\gvec_0
\in L^2_{tb}(0,\infty;H^{-1/2}(\Gamma)^3),
\label{g0ass}
\end{align}
so that $\gvec_0$ is tr.-c. in $\Xi_{loc,w}^+$ and set
\begin{align}
&\mathcal{H}_+(\gvec_0):=[\{T(t)\gvec_0,t\geq 0\}]_{\Xi_{loc,w}^+}.
\label{hullg}
\end{align}
We shall also assume that $\gvec_0$ is tr.-c. in $\Xi_{loc}^+$.
It can be proved that in this case the hull of $\gvec_0$
(defined as in \eqref{hullg} with the closure in $\Xi_{loc}^+$)
coincides with the hull $\mathcal{H}_+(\gvec_0)$ defined
as in \eqref{hull}.
\color{black}
The united trajectory spaces are now given by
$$\mathcal{K}^+_{\mathcal{H}_+(\gvec_0)\times\mathcal{H}_+(\hvec_0)}
=\bigcup_{\gvec\in\mathcal{H}_+(\gvec_0),\hvec\in\mathcal{H}_+(\hvec_0)}\mathcal{K}_{\gvec,\hvec}^+,$$
or by
$$\mathcal{K}^+_{p,\mathcal{H}_+(\gvec_0)\times\mathcal{H}_+(\hvec_0)}
=\bigcup_{\gvec\in\mathcal{H}_+(\gvec_0),\hvec\in\mathcal{H}_+(\hvec_0)}\mathcal{K}_{p,\gvec,\hvec}^+.$$
\color{black}

We therefore can state the following

\begin{thm}
\label{mainDir}
Assume that {\bf (W1)} ({\bf (W1)}$^*$ and {\bf (W3)}) holds and that
$\gvec_0$ is tr.-c. in $C([0,\infty);H^{3/2}(\Gamma)^3)$
with $\partial_t\gvec_0
\in L^2_{tb}(0,\infty;H^{-1/2}(\Gamma)^3)$, and
$\hvec_0\in L^2_{tb}(0,\infty;\Vdiv')$.
Then, $\{T(t)\}$ acting on $\mathcal{K}^+_{\mathcal{H}_+(\gvec_0)\times\mathcal{H}_+(\hvec_0)}$
($\mathcal{K}^+_{p,\mathcal{H}_+(\gvec_0)\times\mathcal{H}_+(\hvec_0)}$)
possesses the uniform (w.r.t. $[\gvec,\hvec]\in\mathcal{H}_+(\gvec_0)\times\mathcal{H}_+(\hvec_0)$)
trajectory attractor $\mathcal{A}_{\mathcal{H}_+(\gvec_0)\times\mathcal{H}_+(\hvec_0)}$
($\mathcal{A}_{p,\mathcal{H}_+(\gvec_0)\times\mathcal{H}_+(\hvec_0)}$).
This set is strictly invariant, bounded in $\mathcal{W}_b^+$ ($\mathcal{W}_{p,b}^+$) and compact in $\Theta_{loc}^+$ ($\Theta_{p,loc}^+$).
In addition, if {\bf (W2)} holds (or if {\bf (W1)}$^*$ and {\bf (W3)} hold),
if $\gvec_0$ is tr.-c. in $\Xi_{loc}^+$ and
if $\hvec_0$ is tr.-c. in $L^2_{loc}([0,\infty);\Vdiv')$
or $\hvec_0\in L^2_{tb}(0,\infty;\Gdiv)$,
\color{black}
then
$\mathcal{K}^+_{\mathcal{H}_+(\gvec_0)\times\mathcal{H}_+(\hvec_0)}$
($\mathcal{K}^+_{p,\mathcal{H}_+(\gvec_0)\times\mathcal{H}_+(\hvec_0)}$)
is closed in $\Theta_{loc}^+$ ($\Theta_{p,loc}^+$),
\color{black}
$\mathcal{A}_{\mathcal{H}_+(\gvec_0)\times\mathcal{H}_+(\hvec_0)}
\subset\mathcal{K}^+_{\mathcal{H}_+(\gvec_0)\times\mathcal{H}_+(\hvec_0)}$
($\mathcal{A}_{p,\mathcal{H}_+(\gvec_0)\times\mathcal{H}_+(\hvec_0)}
\subset\mathcal{K}^+_{p,\mathcal{H}_+(\gvec_0)\times\mathcal{H}_+(\hvec_0)}$)
 and
$$\mathcal{A}_{\mathcal{H}_+(\gvec_0)\times\mathcal{H}_+(\hvec_0)}
=\mathcal{A}_{\omega(\mathcal{H}_+(\gvec_0)\times\mathcal{H}_+(\hvec_0))}\quad
(\mathcal{A}_{p,\mathcal{H}_+(\gvec_0)\times\mathcal{H}_+(\hvec_0)}
=\mathcal{A}_{p,\omega(\mathcal{H}_+(\gvec_0)\times\mathcal{H}_+(\hvec_0))}).$$
\end{thm}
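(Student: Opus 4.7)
The strategy is to reduce everything to Theorem~\ref{trajattract} exactly as for Theorems~\ref{main} and~\ref{main2}, the only novelties being the presence of the nonhomogeneous boundary datum $\gvec$ in the energy inequality~\eqref{eig} and the fact that the symbol space is now a product. The plan is therefore: (i) verify that $\mathcal{H}_+(\gvec_0)\times\mathcal{H}_+(\hvec_0)$ is a compact metric space on which the translation semigroup acts and the family $\{\mathcal{K}_{\gvec,\hvec}^+\}$ (resp.\ $\{\mathcal{K}_{p,\gvec,\hvec}^+\}$) is translation-coordinated; (ii) establish a dissipative estimate of the form~\eqref{dissipativeest} (resp.~\eqref{diss2}); (iii) obtain the closure property by adapting Propositions~\ref{closprop} and~\ref{closprop2}; and finally (iv) invoke Theorem~\ref{trajattract}.

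For step (i), compactness of $\mathcal{H}_+(\gvec_0)$ in $\Xi_{loc,w}^+$ follows from assumption~\eqref{g0ass} by the same argument used for $\mathcal{H}_+(\hvec_0)$, and the Cartesian product of two compact metric spaces is compact and metrizable; the translation invariance is immediate. Translation-coordination of $\{\mathcal{K}_{\gvec,\hvec}^+\}$ is guaranteed, exactly as in the remark following Definition~\ref{trajdef}, because the energy inequality~\eqref{eig} is not imposed at $s=0$.

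Step (ii) is, as I expect, the principal obstacle. Starting from~\eqref{eig} and combining with~\eqref{prelimest} one has to handle the extra term
\begin{align*}
\int_s^t\langle\gvec_\tau,\partial_{\boldsymbol n}\dvec\rangle_{H^{-1/2}(\Gamma)^3\times H^{1/2}(\Gamma)^3}\,d\tau.
\end{align*}
By the trace theorem $\Vert\partial_{\boldsymbol n}\dvec\Vert_{H^{1/2}(\Gamma)^3}\leq c\Vert\dvec\Vert_{H^2(\Omega)^3}$, and $\Vert\dvec\Vert_{H^2(\Omega)^3}$ is controlled by $\Vert{-}\Delta\dvec+\nabla_{\dvec}W(\dvec)\Vert$ modulo lower order terms via elliptic regularity applied to $\dvec$ minus a harmonic lifting of $\gvec$; the $H^{3/2}(\Gamma)^3$-bound on $\gvec$ ensures that the lifting is bounded in $H^2(\Omega)^3$ uniformly in time. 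Young's inequality then absorbs a small fraction of $\Vert{-}\Delta\dvec+\nabla_{\dvec}W(\dvec)\Vert^2$ on the left and leaves $c\Vert\gvec_\tau\Vert_{H^{-1/2}(\Gamma)^3}^2$ plus lower order terms on the right. Since $\gvec_{0,t}\in L^2_{tb}(0,\infty;H^{-1/2}(\Gamma)^3)$ (and the analogous bound is inherited by every $\gvec\in\mathcal{H}_+(\gvec_0)$ by a property analogous to~\eqref{hh0}), this new source term plays the same role as $\hvec$ did in Proposition~\ref{dissprop}. The integrated Gronwall lemma of \cite{FG} then yields the exponential decay of $\mathcal{E}(\wvec(t))$, and the higher-order estimates on $\uvec_t$, $\dvec_t$ and on the tensors in~\eqref{consreg} follow verbatim as in Propositions~\ref{dissprop}/\ref{dissipativeest2}; in the polynomial setting one merely replaces $\Vert\dvec\Vert_{\V}$ by $\Vert\dvec\Vert_{\V}+\Vert\dvec\Vert_{L^p(\Omega)^3}$ throughout.

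For step (iii) the argument of Propositions~\ref{closprop} and~\ref{closprop2} carries over. Given $\wvec_m\in\mathcal{K}_{\gvec_m,\hvec_m}^M$ with $\wvec_m\to\wvec$ in $\Theta_M$ (or $\Theta_{p,M}$), $\gvec_m\to\gvec$ in $\Xi_{loc}^+$ and $\hvec_m\to\hvec$ as in (a) or (b) of Proposition~\ref{closprop}, the passage to the limit in~\eqref{varform} and in~\eqref{sy2} is unchanged, while the Dirichlet trace condition $\dvec_m|_{\Gamma}=\gvec_m$ survives thanks to the strong convergence $\dvec_m\to\dvec$ in $L^2(0,M;\V)$ provided by Aubin--Lions and the continuity of the trace operator. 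In the energy inequality~\eqref{eig}, the new term converges because $\partial_{\boldsymbol n}\dvec_m\to\partial_{\boldsymbol n}\dvec$ strongly in $L^2(0,M;H^{1/2}(\Gamma)^3)$ by virtue of the compactness~\eqref{AubLio} combined with the trace theorem, while $\gvec_{m,t}\rightharpoonup\gvec_t$ in $L^2(0,M;H^{-1/2}(\Gamma)^3)$ from $\gvec_m\to\gvec$ in $\Xi_{loc}^+$. Having established (i)--(iii), Theorem~\ref{trajattract} applied to the product symbol space $\mathcal{H}_+(\gvec_0)\times\mathcal{H}_+(\hvec_0)$ delivers both the existence and strict invariance of $\mathcal{A}_{\mathcal{H}_+(\gvec_0)\times\mathcal{H}_+(\hvec_0)}$ and, under the closure assumptions, its coincidence with $\mathcal{A}_{\omega(\mathcal{H}_+(\gvec_0)\times\mathcal{H}_+(\hvec_0))}$, completing the proof in both the general and the polynomial case.
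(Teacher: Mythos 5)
Your overall strategy --- product symbol space, dissipative estimate, closure property, then Theorem~\ref{trajattract} --- is the same as the paper's, and steps (i) and (iv) are fine. The gap is in step (ii): you cannot simply ``combine \eqref{eig} with \eqref{prelimest}'', because \eqref{prelimest} is stated and proved only for $\dvec$ with $\partial_{\boldsymbol n}\dvec=0$ on $\Gamma$. Its proof rests on $(-\Delta\dvec,\nabla_{\dvec}W_1(\dvec))\geq 0$, which follows from the convexity of $W_1$ only after an integration by parts whose boundary term vanishes for Neumann data. For Dirichlet data that integration by parts leaves the extra term $-\int_{\Gamma}\partial_{\boldsymbol n}\dvec\cdot\nabla_{\dvec}W_1(\gvec)$, so the coercivity inequality itself must be rederived: the paper's \eqref{p1} keeps this boundary integral and \eqref{p3} estimates it by $\frac18\Vert\Delta\dvec\Vert^2+a_4\Vert\gvec\Vert_{H^{3/2}(\Gamma)^3}^2+a_4\Vert\nabla_{\dvec}W_1(\gvec)\Vert_{L^2(\Gamma)^3}^2$. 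This is not cosmetic: it is the reason the absorbing radius in the Dirichlet case depends also on $\Vert\nabla_{\dvec}W_1(\gvec_0)\Vert_{L^2_{tb}(0,\infty;L^{2}(\Gamma)^3)}$, and the reason the paper needs the extra growth condition \eqref{altassWbis} in the companion remark for the polynomial setting. Your step (ii) accounts only for the other boundary contribution, $\int_s^t\langle\gvec_\tau,\partial_{\boldsymbol n}\dvec\rangle$, which is indeed handled essentially as you describe (the paper uses the elliptic estimate $\Vert\dvec\Vert_{H^2(\Omega)^3}\leq c(\Vert\Delta\dvec\Vert+\Vert\gvec\Vert_{H^{3/2}(\Gamma)^3})$ rather than a harmonic lifting, but that is equivalent).

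There is also a secondary slip in step (iii): Aubin--Lions, i.e.\ \eqref{AubLio}, gives $\dvec_m\to\dvec$ strongly only in $L^2(0,M;H^{2-\delta}(\Omega)^3)$, hence $\partial_{\boldsymbol n}\dvec_m\to\partial_{\boldsymbol n}\dvec$ strongly only in $L^2(0,M;H^{1/2-\delta}(\Gamma)^3)$, not in $L^2(0,M;H^{1/2}(\Gamma)^3)$ as you assert; paired with merely weak convergence of $\partial_t\gvec_m$ in $L^2(0,M;H^{-1/2}(\Gamma)^3)$ this does not yield convergence of the duality product. The paper passes to the limit the other way round: since $\gvec_0$ is assumed tr.-c.\ in $\Xi_{loc}^+$, the boundary symbols converge strongly, so $\partial_t\gvec_m\to\partial_t\gvec$ strongly in $L^2(0,M;H^{-1/2}(\Gamma)^3)$ while $\partial_{\boldsymbol n}\dvec_m\rightharpoonup\partial_{\boldsymbol n}\dvec$ weakly in $L^2(0,M;H^{1/2}(\Gamma)^3)$ by \eqref{conv3}. (Your strong $H^{1/2-\delta}$ convergence can be put to use, but only if one correspondingly strengthens the hypothesis on $\partial_t\gvec_0$ to lie in $H^{-1/2+\delta}(\Gamma)^3$, as the paper observes in a remark.)
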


\begin{proof}
We can easily recover
a dissipative estimate of the form \eqref{dissipativeest} (or \eqref{diss2}) and also the closure property
of the space of trajectories.
Let us start by proving the dissipative inequality, taking, e.g., assumption {\bf (W1)}.
First observe that, due to the convexity of $W_1$ and to \eqref{w1}, we have
\begin{align}
&\Vert-\Delta\dvec+\nabla_{\dvec}W(\dvec)\Vert^2\geq\frac{1}{2}\Vert\Delta\dvec\Vert^2+
\frac{1}{2c_0}\int_{\Omega}W_1(\dvec)-\frac{1}{2}|\Omega|\nonumber\\
&
-\int_{\Gamma}\partial_{\boldsymbol n}\dvec\cdot\nabla_{\dvec}W_1(\gvec)-\Vert\nabla_{\dvec}W_2(\dvec)\Vert^2.
\color{black}
\label{p1}
\end{align}
Observe that, since $W_1\in C^{1,1}_{loc}(\mathbb{R}^3)$ and $\dvec\in H^2(\Omega)$,
then it is not hard to prove that the trace of $\nabla_{\dvec}W_1(\dvec)$
on $\Gamma$ is $\nabla_{\dvec}W_1(\gvec)$.
\color{black}
By using trace and $H^2-$elliptic regularity estimates we can write
\begin{align}
&|\langle\gvec_t,\partial_{\boldsymbol{n}}\dvec\rangle|_{H^{-1/2}(\Gamma)^3\times H^{1/2}(\Gamma)^3}\leq
\Vert\gvec_t\Vert_{H^{-1/2}(\Gamma)^3}\Vert\partial_{\boldsymbol{n}}\dvec\Vert_{ H^{1/2}(\Gamma)^3}\nonumber\\
&\leq a_1\Vert\gvec_t\Vert_{H^{-1/2}(\Gamma)^3}\Vert\dvec\Vert_{ H^2(\Omega)^3}
\leq a_2\Vert\gvec_t\Vert_{H^{-1/2}(\Gamma)^3}\Big(\Vert\Delta\dvec\Vert+\Vert\gvec\Vert_{H^{3/2}(\Gamma)^3}\Big)
\nonumber\\
&\leq\frac{1}{8}\color{black}\Vert\Delta\dvec\Vert^2+2a_2^2\Vert\gvec_t\Vert_{H^{-1/2}(\Gamma)^3}^2
+\Vert\gvec\Vert_{H^{3/2}(\Gamma)^3}^2,
\label{p2}
\end{align}
and the boundary integral term in \eqref{p1} can be estimated as
\begin{align}
&|\langle \partial_{\boldsymbol n}\dvec,\nabla_{\dvec}W_1(\gvec)\rangle|
\leq a_3(\Vert\Delta\dvec\Vert+\Vert\gvec\Vert_{H^{3/2}(\Gamma)^3})\Vert\nabla_{\dvec}W_1(\gvec)\Vert_{L^2(\Gamma)^3}\nonumber\\
&\leq\frac{1}{8}\Vert\Delta\dvec\Vert^2+a_4\Vert\gvec\Vert_{H^{3/2}(\Gamma)^3}^2
+a_4\Vert\nabla_{\dvec}W_1(\gvec)\Vert_{L^2(\Gamma)^3}^2.
\label{p3}
\end{align}
\color{black}
Henceforth we shall denote by $a_i$ some nonnegative constants
which depend only on $\Omega$ (like $a_i$ for
$i=1,\cdots, 6$) or on $\Omega$ and $W$.\color{black}\\
Take now $w\in\mathcal{K}_{\gvec,\hvec}^+$ with $\gvec\in\mathcal{H}_+(\gvec_0)$
and $\hvec\in\mathcal{H}_+(\hvec_0)$. Then, plugging \eqref{p1}--\eqref{p3}
into \eqref{eig} and using also the elliptic estimate
$a_5\Vert\dvec\Vert_{\V}^2\leq\Vert\Delta\dvec\Vert^2+\Vert\gvec\Vert_{H^{1/2}(\Gamma)^3}^2$
and Poincar\'{e} inequality for $\uvec$, we obtain
\begin{align}
&\mathcal{E}(\wvec(t))+\int_s^t\Big\{ \frac{a_5}{4}\Vert\nabla\dvec\Vert^2+\frac{1}{2c_0}\int_{\Omega}W_1(\dvec)+\frac{a_5}{4}\Vert\dvec\Vert^2
-\frac{|\Omega|}{2}-\Vert\nabla_{\dvec}W_2(\dvec)\Vert^2+\frac{\nu\lambda_1}{2}\Vert\uvec\Vert^2\Big\}d\tau
\nonumber\\
\nonumber
&\leq\mathcal{E}(\wvec(s))+\int_s^t\Big\{2a_2^2\Vert\gvec_t\Vert_{H^{-1/2}(\Gamma)^3}^2
+a_6\Vert\gvec\Vert_{H^{3/2}(\Gamma)^3}^2\\
\nonumber
&\qquad\qquad\qquad\qquad+a_4\Vert\nabla_{\dvec}W_1(\gvec)\Vert_{L^2(\Gamma)^3}^2
\color{black}+
\frac{1}{2\nu}\Vert\hvec\Vert_{\Vdiv'}^2\Big\}d\tau,\nonumber
\end{align}
for all $t\geq s$ and for a.a. $s\in(0,\infty)$.
On account of \eqref{w2} and of the assumption on $W_2$ we can now choose $a_7$ and $a_8$ (depending
on $\Omega$ and also on $W$) such that
$$\frac{1}{2c_0}W_1(\dvec)+\frac{a_5}{4}|\dvec|^2-\frac{1}{2}-|\nabla_{\dvec}W_2(\dvec)|^2
\geq a_7 W(\dvec)-a_8.$$
We are thus led to the following inequality
\begin{align}
&\mathcal{E}(\wvec(t))+k'\int_0^t\mathcal{E}(\wvec(\tau))d\tau\leq l'(t-s)
+\int_s^t m(\tau)d\tau+\mathcal{E}(\wvec(s))+k'\int_0^s\mathcal{E}(\wvec(\tau))d\tau,
\label{inindir}
\end{align}
for all $t\geq s$ and for a.a. $s\in(0,\infty)$, where
$k'=\min(a_5/2,a_7,\lambda_1\nu)$, $l'=a_8|\Omega|$
\color{black}
and
\begin{align}
&m(t):=2a_2^2\Vert\gvec_t(t)\Vert_{H^{-1/2}(\Gamma)^3}^2
+a_6\Vert\gvec(t)\Vert_{H^{3/2}(\Gamma)^3}^2+a_4\Vert\nabla_{\dvec}W_1(\gvec(t))\Vert_{L^2(\Gamma)^3}^2
+\frac{1}{2\nu}\Vert\hvec(t)\Vert_{\Vdiv'}^2.
\end{align}
Once we have \eqref{inindir}, it is not difficult to argue as in the proof of Proposition \ref{dissprop}
(notice in particular that an estimate similar to \eqref{n6} can be obtained in this case by
exploiting once again the elliptic regularity estimates for $\dvec$ already used above).
In particular, it easy to check that, due to the assumption \eqref{g0ass} on $\gvec_0$, for
every $g\in\mathcal{H}_+(\gvec_0)$ we have
\begin{align}
&\Vert\gvec\Vert_{L^2_{tb}(0,\infty;H^{3/2}(\Gamma)^3)}
\leq\Vert\gvec_0\Vert_{L^2_{tb}(0,\infty;H^{3/2}(\Gamma)^3)},\quad
\Vert\partial_t\gvec\Vert_{L^2_{tb}(0,\infty;H^{-1/2}(\Gamma)^3)}
\leq\Vert\partial_t\gvec_0\Vert_{L^2_{tb}(0,\infty;H^{-1/2}(\Gamma)^3)}\nonumber\\
&\Vert\nabla_{\dvec}W_1(\gvec)\Vert_{L^2_{tb}(0,\infty;L^{2}(\Gamma)^3)}
\leq\Vert\nabla_{\dvec}W_1(\gvec_0)\Vert_{L^2_{tb}(0,\infty;L^{2}(\Gamma)^3)}.\nonumber
\end{align}
\color{black}
Therefore, we can first prove that $\mathcal{K}_{\gvec,\hvec}^+\subset\mathcal{W}_b^+$ and then recover
an inequality of the form \eqref{dissipativeest}, with $k'$ in place of $k$ and with $\Lambda_0$ depending on the constants $a_i$
and on the norms $\Vert\gvec_0\Vert_{L^2_{tb}(0,\infty;H^{3/2}(\Gamma)^3)}$,
$\Vert\partial_t\gvec_0\Vert_{L^2_{tb}(0,\infty;H^{-1/2}(\Gamma)^3)}$,
$\Vert\nabla_{\dvec}W_1(\gvec_0)\Vert_{L^2_{tb}(0,\infty;L^{2}(\Gamma)^3)}$ and
$\Vert\hvec_0\Vert_{L^2_{tb}(0,\infty;\Vdiv')}$.
\color{black}
If assumption {\bf (W1)}$^*$ and {\bf (W3)} hold in place of {\bf (W1)} we can argue as above and as in Proposition \ref{dissipativeest2}
and recover a dissipative estimate in the form \eqref{diss2}, where the constant $\Gamma_0$ now depends also
on the above norms of $\gvec_0$, $\partial_t\gvec_0$ and $\nabla_{\dvec}W_1(\gvec_0)$.\color{black}
We omit the details.

Let us now prove the closure property of the space of trajectories, assuming first, e.g., assumptions {\bf (W1)}
and {\bf (W2)}.
Let $\gvec_m\in\Xi_M$, $\hvec_m\in L^2(0,M;\Vdiv')$ and $\wvec_m\in\mathcal{K}_{\gvec_m,\hvec_m}^+$
with $\wvec_m:=[\uvec_m,\dvec_m]$ such that $\wvec_m\to\wvec$ in $\Theta_M$,
$\hvec_m\to\hvec$ in $L^2(0,M;\Vdiv')$ and $\gvec_m\to\gvec$ in $\Xi_M$. Then, we claim
that $\wvec\in\mathcal{K}_{\gvec,\hvec}^+$.
Indeed, we can argue as in the proof of Proposition \ref{closprop}. In particular,
in the energy inequality \eqref{eig}, written for each $\wvec_m$ and corresponding to $\gvec_m$
and $\hvec_m$, the second term on the right hand side can be estimated as
\begin{align*}
&\Big|\int_s^t\langle\partial_t\gvec_m,\partial_{\boldsymbol n}\dvec_m\rangle d\tau\Big|
\leq c\Vert\partial_t\gvec_m\Vert_{L^2(0,M;H^{-1/2}(\Gamma)^3)}\Vert\dvec_m\Vert_{L^2(0,M;H^2(\Omega)^3)}\leq c,
\end{align*}
for all $t\in[0,M]$ and a.a. $s\in[0,M]$ with $t\geq s$, due to the convergence assumption on the sequence
$\{\gvec_m\}$ and to \eqref{conv3}. Hence, using {\bf (W2)} and the convergence assumption on $\{\hvec_m\}$
we again infer that the right hand side of \eqref{eig} is bounded and recover the control
\eqref{H2cont}. In order to prove that $\wvec$ is a weak solution corresponding to $\gvec$ and $\hvec$
satisfying the energy inequality \eqref{eig}, we notice in particular that the convergence assumption
on $\{\gvec_m\}$ and \eqref{conv3} imply that $\dvec|_{\Gamma}=\gvec$
and furthermore that
$$ \int_s^t\langle\partial_t\gvec_m,\partial_{\boldsymbol n}\dvec_m\rangle d\tau
\to\int_s^t\langle\partial_t\gvec,\partial_{\boldsymbol n}\dvec\rangle d\tau.$$
Hence $\{\mathcal{K}_{\gvec,\hvec}^M\}_{\gvec\in\Xi_M,\hvec\in L^2(0,M;\Vdiv')}$
is $(\Theta_M,\Xi_M\times L^2(0,M;\Vdiv'))-$closed.

Furthermore, if the convergence assumption on $\{\hvec_m\}$ is replaced by
$\hvec_m\rightharpoonup\hvec$, weakly in $L^2(0,M;\Gdiv)$, then, arguing
as in \cite[Chap. XV, Prop. 1.1]{CVbook} (cf. end of the proof of Proposition \ref{closprop}),
we can still conclude that $\{\mathcal{K}_{\gvec,\hvec}^M\}_{\gvec\in\Xi_M,\hvec\in L^2_w(0,M;\Gdiv)}$
is $(\Theta_M,\Xi_M\times L^2_w(0,M;\Gdiv))-$closed.
\color{black}

Finally, if {\bf (W1)}$^*$ and {\bf (W3)} hold,  it is easy to check that $\{\mathcal{K}_{p,\gvec,\hvec}^M\}_{\gvec\in\Xi_M,\hvec\in L^2(0,M;\Vdiv')}$
is $(\Theta_{p,M},\Xi_M\times L^2(0,M;\Vdiv'))-$closed
and that $\{\mathcal{K}_{p,\gvec,\hvec}^M\}_{\gvec\in\Xi_M,\hvec\in L^2_w(0,M;\Gdiv)}$
is $(\Theta_{p,M},\Xi_M\times L^2_w(0,M;\Gdiv))-$closed, by the same argument as above and Proposition \ref{closprop2}.
This concludes the proof of Theorem~\ref{mainDir}
\color{black}.
\end{proof}

\begin{oss}{\upshape
If {\bf (W1)}$^*$ and {\bf (W3)} hold
the same conclusions of Theorem \ref{mainDir} still hold under a different assumption on
the Dirichlet datum $\gvec_0$. Indeed, suppose that $W_1$ satisfies the additional assumption
\begin{align}
&|\nabla_{\dvec}W_1(\dvec)|\leq C_3(1+|\dvec|^{p-1}),\qquad\forall\dvec\in\mathbb{R}^3,\qquad p>2,
\label{altassWbis}
\end{align}
(compare with {\bf (W3)}), and introduce the symbol space
$$\widetilde{\Xi}^+_{loc}:=\{\gvec\in L^{2p-2}_{loc}([0,\infty);H^{3/2}(\Gamma)^3):\gvec_t\in L^2_{loc}([0,\infty);H^{-1/2}(\Gamma)^3)\},$$
and also
\begin{align*}
&\widetilde{\Xi}_{loc,w}^+:=\{\gvec\in L^{2p-2}_{loc,w}([0,\infty);H^{3/2}(\Gamma)^3):\gvec_t\in L^2_{loc,w}([0,\infty);H^{-1/2}(\Gamma)^3)\}.
\end{align*}
Assume that
$$\gvec_0\in L^{2p-2}_{tb}(0,\infty;H^{3/2}(\Gamma)^3),\qquad\gvec_t\in L^2_{tb}(0,\infty;H^{-1/2}(\Gamma)^3).$$
Then, if we define the hull $\mathcal{H}_+(\gvec_0)$ as in \eqref{hullg}
with now the closure in $\widetilde{\Xi}_{loc,w}^+$, by arguing as in the proof above and using assumption \eqref{altassWbis}
in \eqref{p3}, we can still get a dissipative estimate in the form \eqref{diss2}
with the constant $\Gamma_0$ now depending on the norms
$\Vert\gvec_0\Vert_{L^{2p-2}_{tb}(0,\infty;H^{3/2}(\Gamma)^3)}$,
$\Vert\partial_t\gvec_0\Vert_{L^2_{tb}(0,\infty;H^{-1/2}(\Gamma)^3)}$
and $\Vert\nabla_{\dvec}W_1(\gvec_0)\Vert_{L^2_{tb}(0,\infty;L^{2}(\Gamma)^3)}$.
Finally, assuming in addition that $\partial_t\gvec_0$
is tr.-c. in $L^2_{loc}([0,\infty);H^{-1/2}(\Gamma)^3)$, then
the closure property of the space of trajectories can be recovered as well.
}
\end{oss}

\begin{oss}{\upshape
Notice that if $\wvec_m\to\wvec$ in $\Theta_M$, then due to the compact embedding
\eqref{AubLio}, we have
$$\partial_{\boldsymbol n}\dvec_m\to\partial_{\boldsymbol n}\dvec,\quad\mbox{strongly in }
L^2(0,M;H^{1/2-\delta}(\Gamma)^3),$$
for every $0<\delta<1/2$. Therefore, as far as the closure property
of the space of trajectories is concerned, the assumption
on $\partial_t\gvec_0$ in Theorem \ref{mainDir} could be replaced by
$$\partial_t\gvec_0\in L^2_{tb}(0,\infty;H^{-1/2+\delta}(\Gamma)^3),\quad 0<\delta<1/2.$$
}
\end{oss}

\color{black}


\section{Further properties of the trajectory attractor}
\label{furprop}

Let us consider only the case of homogeneous Neumann boundary condition for $\dvec$
and refer to both functional settings and assumptions on the potential introduced in
the previous section. The results of this section can be reproduced
also for the case of non-homogeneous Dirichlet boundary condition for $\dvec$ without any difficulty
(cf. Theorem \ref{mainDir}).

We start to discuss some structural properties of the trajectory attractor.

Denote by $Z(\hvec_0):=Z(\mathcal{H}_+(\hvec_0))$ the set of all complete symbols
in $\omega(\mathcal{H}_+(\hvec_0))$.
Recall that a function $\zetavec\in L_{loc}^2(\mathbb{R};\Vdiv')$
is a complete symbol in $\omega(\mathcal{H}_+(\hvec_0))$ if $\Pi_+T(t)\zetavec\in\omega(\mathcal{H}_+(\hvec_0))$
for all $t\in\mathbb{R}$, where $\Pi_+=\Pi_{[0,\infty)}$.
It can be proved (see \cite[Section 4]{CV} or \cite[Chap. XIV, Section 2]{CVbook}) that, due to the strict
invariance of $\omega(\mathcal{H}_+(\hvec_0))$, given a symbol $\hvec\in\omega(\mathcal{H}_+(\hvec_0))$
there exists at least one complete symbol $\widehat{\hvec}$ (not necessarily unique) which is an extension
of $\hvec$ on $(-\infty,0]$ and such that $\Pi_+T(t)\widehat{\hvec}\in\omega(\mathcal{H}_+(\hvec_0))$
for all $t\in\mathbb{R}$. Note that we have $\Pi_+Z(\hvec_0)=\omega(\mathcal{H}_+(\hvec_0))$.

Let us refer first to the functional setting introduced in Theorem \ref{main}.

To every complete symbol $\zetavec\in Z(\hvec_0)$ there corresponds by \cite[Chap. XIV, Definition 2.5]{CVbook}
(see also \cite[Definition 4.4]{CV}) the kernel $\mathcal{K}_{\zetavec}$ in $\mathcal{W}_b$
which consists of the union of all complete trajectories which belong to $\mathcal{W}_b$,
i.e., all weak solutions $\wvec=[\uvec,\dvec]:\mathbb{R}\to \Gdiv\times\V$
with external force $\zetavec\in Z(\hvec_0)$ (in the sense of Definition \ref{wsdefn} with
the interval $[0,\infty)$ replaced by $\mathbb{R}$)
satisfying \eqref{eist} on $\mathbb{R}$ (i.e., for all $t\geq s$ and for a.a. $s\in\mathbb{R}$)
that belong to $\mathcal{W}_b$. We recall that the space $(\mathcal{W}_b,\rho_{\mathcal{W}_b})$ is defined
as the space $(\mathcal{W}_b^+,\rho_{\mathcal{W}_b}^+)$ (see \eqref{Fb} and \eqref{dFb}) with the time
interval $(0,\infty)$ replaced by $\mathbb{R}$. The space $(\mathcal{W}_{loc},\Theta_{loc})$ can be defined in the same way.

Set
$$\mathcal{K}_{Z(\hvec_0)}:=\bigcup_{\zetavec\in Z(\hvec_0)}\mathcal{K}_{\zetavec}.$$
Then, if the assumptions of Theorem \ref{main} hold
we also have (see, e.g., \cite[Theorem 4.1]{CV})
$$\mathcal{A}_{\mathcal{H}_+(\hvec_0)}=\mathcal{A}_{\omega(\mathcal{H}_+(\hvec_0))}=\Pi_+\mathcal{K}_{Z(\hvec_0)},
$$
and the set $\mathcal{K}_{Z(\hvec_0)}$ is compact in $\Theta_{loc}$ and bounded in $\mathcal{W}_b$.

On the other hand, it is not difficult to see that, under the assumptions of Theorem \ref{main},
$\mathcal{K}_{\zetavec}\neq\emptyset$ for all $\zetavec\in Z(\hvec_0)$.
Indeed, by virtue of \cite[Theorem 4.1]{CV} (see also \cite[Chap. XIV, Theorem 2.1]{CVbook}),
this is a consequence of the fact that the family $\{\mathcal{K}_{\hvec}^+\}_{\hvec\in\mathcal{H}_+(\hvec_0)}$
of trajectory spaces satisfies the following condition: there exists $R>0$ such that
$B_{\mathcal{W}_b^+}(0,R)\cap\mathcal{K}_{\hvec}^+\neq\emptyset$ for all $\hvec\in\mathcal{H}_+(\hvec_0)$.
In order to check this condition fix an initial datum $\wvec_0^{\ast}=[\uvec_0^{\ast},\dvec_0^{\ast}]$,
with $\uvec_0^{\ast},\dvec_0^{\ast}$ taken as in Theorem \ref{existence}. We know that for every
$\hvec\in\mathcal{H}_+(\hvec_0)$ there exists a trajectory $\wvec_{\hvec}^{\ast}\in\mathcal{K}_{\hvec}^+$ such that
$\wvec_{\hvec}^{\ast}(0)=\wvec_0^{\ast}$ and such that the energy inequality \eqref{eist} holds
for all $t\geq s$ and for a.a. $s\in(0,\infty)$, {\itshape including $s=0$}. Arguing as in Proposition
\ref{dissprop} (cf. \eqref{n0} written for $s=0$ and all $t\geq 0$) we get an estimate
of the form $\rho_{\mathcal{W}_b^+}(\wvec_{\hvec}^{\ast},0)\leq \Lambda(\wvec_0^{\ast},\hvec_0)$ (see also \eqref{hh0}),
where the positive constant $\Lambda$ depends on $\mathcal{E}(\wvec_0^{\ast})$ and on the norm
$\|\hvec_0\|_{L^2_{tb}(0,\infty;\Vdiv')}$. The above condition is thus fulfilled by choosing $R=\Lambda(\wvec_0^{\ast},\hvec_0)$.

In the case we consider the functional setting of Theorem \ref{main2},
we can similarly introduce the kernel $\mathcal{K}_{p,\zetavec}$ in $\mathcal{W}_{p,b}$,
with the Banach space $(\mathcal{W}_{p,b},\rho_{\mathcal{W}_{p,b}})$ always defined as the space
$(\mathcal{W}_{p,b}^+,\rho_{\mathcal{W}_{p,b}^+})$ (see \eqref{Fpb} and \eqref{Fpbnorm}) with the time interval $(0,\infty)$ replaced
by $\mathbb{R}$. The space $(\mathcal{W}_{p,loc},\Theta_{p,loc})$ can be defined in the same way
from the space $(\mathcal{W}_{p,loc}^+,\Theta_{p,loc}^+)$.
Hence, in this case, if the assumptions of Theorem \ref{main2} hold, we have
$$\mathcal{A}_{p,\mathcal{H}_+(\hvec_0)}=\mathcal{A}_{p,\omega(\mathcal{H}_+(\hvec_0))}
=\Pi_+\mathcal{K}_{p,Z(\hvec_0)}:=\Pi_+\bigcup_{\zetavec\in Z(\hvec_0)}\mathcal{K}_{p,\zetavec},$$
and the set $\mathcal{K}_{p,Z(\hvec_0)}$ is compact in $\Theta_{p,loc}$ and bounded in $\mathcal{W}_{p,b}$.
The proof that $\mathcal{K}_{p,\zetavec}\neq\emptyset$ for all $\zetavec\in Z(\hvec_0)$
is exactly the same as above.

As far as the attraction properties are concerned, we observe that, due to compactness results,
the trajectory attractor attracts the subsets
of the family $\mathcal{B}_{\mathcal{H}_+(\hvec_0)}^+$
(if we refer to Theorem \ref{main}), or the subsets
of $\mathcal{K}_{p,\mathcal{H}_+(\hvec_0)}^+$
bounded in $\mathcal{W}_{p,b}^+$ (if we refer to Theorem \ref{main2}),
in some strong topologies. Indeed, set
\begin{align}
&\mathbb{X}_{\delta_1,\delta_2}:=H^{\delta_1}(\Omega)^3\times H^{1+\delta_2}(\Omega)^3,\qquad
\mathbb{Y}_{\delta_1,\delta_2}:=H^{-\delta_1}(\Omega)^3\times H^{\delta_2}(\Omega)^3,\\
&\mathbb{Y}_{\delta_1,\delta_2}^s:=H^{-\delta_1}(\Omega)^3\times (H^{\delta_2}(\Omega)^3\cap L^s(\Omega)^3)
\end{align}
where $0\leq\delta_1,\delta_2<1$ and $2\leq s<p$. Then, by using the compact embeddings
\begin{align}
&
L^2(0,M;\Vdiv\times H^2(\Omega)^3)\cap H^1(0,M;W^{-1,3/2}(\Omega)^3\times L^{3/2}(\Omega)^3)\hookrightarrow\hookrightarrow
 L^2(0,M;\mathbb{X}_{\delta_1,\delta_2}),\nonumber\\
&L^{\infty}(0,M;\Gdiv\times\V)\cap H^1(0,M;W^{-1,3/2}(\Omega)^3\times L^{3/2}(\Omega)^3)\hookrightarrow\hookrightarrow
C([0,M];\mathbb{Y}_{\delta_1,\delta_2}),\nonumber\\
\nonumber
&L^{\infty}(0,M;\Gdiv\times(\V\cap L^p(\Omega)^3))\cap H^1(0,M;W^{-1,3/2}(\Omega)^3\times L^{3/2}(\Omega)^3)\\
\nonumber
&\qquad\qquad\qquad\hookrightarrow\hookrightarrow
C([0,M];\mathbb{Y}_{\delta_1,\delta_2}^s),\nonumber
\end{align}
then Theorem \ref{main} and Theorem \ref{main2}
imply the following two corollaries (see \cite[Chap. XIV, Theorem 2.2]{CVbook})
\begin{cor}
\label{statt1}
Let {\bf (W1)} and {\bf (W2)} hold and assume $\hvec_0\in L^2_{tb}(0,\infty;\Vdiv')$. Then, for every $0\leq\delta_1,\delta_2<1$
the trajectory attractor $\mathcal{A}_{\mathcal{H}_+(\hvec_0)}$ from Theorem \ref{main}
is compact in $L^2_{loc}([0,\infty);\mathbb{X}_{\delta_1,\delta_2})\cap C([0,\infty);\mathbb{Y}_{\delta_1,\delta_2})$,
bounded in $L^2_{tb}(0,\infty;\mathbb{X}_{\delta_1,\delta_2})\cap C_b([0,\infty);\mathbb{Y}_{\delta_1,\delta_2})$,
and for every
$B\in\mathcal{B}_{\mathcal{H}_+(\hvec_0)}^+$ and
every $M>0$ we have, for $t\to+\infty$
\begin{align}
&\mbox{dist}_{L^2(0,M;\mathbb{X}_{\delta_1,\delta_2})}
\Big(\Pi_{[0,M]}T(t)B,\Pi_{[0,M]}\mathcal{A}_{\mathcal{H}_+(\hvec_0)}\Big)\to 0,\nonumber\\
&\mbox{dist}_{C([0,M];\mathbb{Y}_{\delta_1,\delta_2})}
\Big(\Pi_{[0,M]}T(t)B,\Pi_{[0,M]}\mathcal{A}_{\mathcal{H}_+(\hvec_0)}\Big)\to 0.\nonumber
\end{align}
\end{cor}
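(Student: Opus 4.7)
The plan is to upgrade the weak attraction and compactness statements of Theorem \ref{main} to the stronger topologies $L^2_{loc}([0,\infty);\mathbb{X}_{\delta_1,\delta_2})$ and $C([0,\infty);\mathbb{Y}_{\delta_1,\delta_2})$ by purely interpolation/compact-embedding arguments, using the Aubin--Lions-type embeddings displayed just before the corollary. The two key facts to exploit are: (i) $\mathcal{A}_{\mathcal{H}_+(\hvec_0)}\subset B_{\mathcal{W}_b^+}(\boldsymbol 0,2\Lambda_0)$ is bounded in $\mathcal{W}_b^+$, and (ii) for each $M>0$ the restriction $\Pi_{[0,M]}$ sends a bounded set of $\mathcal{W}_b^+$ into a bounded set of $\mathcal{W}_M$, which compactly embeds into $L^2(0,M;\mathbb{X}_{\delta_1,\delta_2})\cap C([0,M];\mathbb{Y}_{\delta_1,\delta_2})$ by the first two of the displayed compact embeddings.

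The boundedness statement is immediate: since $\mathcal{A}_{\mathcal{H}_+(\hvec_0)}\subset\mathcal{W}_b^+$, the regularity in the definition \eqref{Fb} gives uniform control in $L^\infty(0,\infty;\Gdiv\times\V)$ and in $L^2_{tb}$ of time derivatives; the compact embedding of $\V\cap H^2(\Omega)^3$ pieces into the $\delta_i$-scale (together with interpolation between $L^\infty_t$ and $L^2_t$) then yields $L^2_{tb}(0,\infty;\mathbb{X}_{\delta_1,\delta_2})\cap C_b([0,\infty);\mathbb{Y}_{\delta_1,\delta_2})$ boundedness. For the compactness part, I would fix $M>0$, restrict to $[0,M]$, and observe that $\Pi_{[0,M]}\mathcal{A}_{\mathcal{H}_+(\hvec_0)}$ is bounded in $\mathcal{W}_M$, hence precompact in $L^2(0,M;\mathbb{X}_{\delta_1,\delta_2})$ and in $C([0,M];\mathbb{Y}_{\delta_1,\delta_2})$ by the displayed embeddings. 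Since $\mathcal{A}_{\mathcal{H}_+(\hvec_0)}$ is already compact (hence closed) in $\Theta_{loc}^+$ and convergence in any of the stronger topologies implies convergence in $\Theta_{loc}^+$, Hausdorff uniqueness of limits upgrades precompactness to compactness. Taking an exhausting sequence $M_n\to\infty$ and using a diagonal argument passes from compactness on each $[0,M_n]$ to compactness in the Fréchet topologies of $L^2_{loc}([0,\infty);\mathbb{X}_{\delta_1,\delta_2})$ and $C([0,\infty);\mathbb{Y}_{\delta_1,\delta_2})$.

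For the attraction statement, the plan is classical: fix $B\in\mathcal{B}_{\mathcal{H}_+(\hvec_0)}^+$ and $M>0$, and argue by contradiction. If the $L^2(0,M;\mathbb{X}_{\delta_1,\delta_2})$-distance did not tend to zero, one extracts a sequence $\wvec_n\in T(t_n)B$ with $t_n\to\infty$ keeping a fixed positive distance from $\Pi_{[0,M]}\mathcal{A}_{\mathcal{H}_+(\hvec_0)}$ in that norm. The dissipative estimate \eqref{dissipativeest} guarantees that $\{\Pi_{[0,M]}\wvec_n\}$ is bounded in $\mathcal{W}_M$, hence precompact in $L^2(0,M;\mathbb{X}_{\delta_1,\delta_2})\cap C([0,M];\mathbb{Y}_{\delta_1,\delta_2})$ by Aubin--Lions. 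A subsequence converges strongly in these norms to some $\wvec^*$; by the attraction property in $\Theta_{loc}^+$ from Theorem \ref{main}, the limit must lie in $\Pi_{[0,M]}\mathcal{A}_{\mathcal{H}_+(\hvec_0)}$, contradicting the assumed separation. The same argument in $C([0,M];\mathbb{Y}_{\delta_1,\delta_2})$ handles the second convergence.

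The main (and essentially only) obstacle is not conceptual but technical: verifying carefully that the restriction map $\Pi_{[0,M]}:\mathcal{W}_b^+\to\mathcal{W}_M$ sends bounded-in-$\mathcal{W}_b^+$ sets into bounded-in-$\mathcal{W}_M$ sets (which follows directly from comparing \eqref{Fb} with \eqref{FM}), and that the weak $\Theta_M$-limits coincide with the strong $L^2(0,M;\mathbb{X}_{\delta_1,\delta_2})$ and $C([0,M];\mathbb{Y}_{\delta_1,\delta_2})$ limits whenever both exist — a standard consequence of the continuous injection of the stronger into the weaker topology on bounded sets. Once this bookkeeping is in place, everything reduces to the abstract scheme of \cite[Chap.~XIV, Theorem 2.2]{CVbook}, to which we can simply refer for the conclusion.
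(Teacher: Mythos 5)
Your proposal is correct and follows essentially the same route as the paper, which simply invokes the three displayed compact (Aubin--Lions type) embeddings together with the abstract result \cite[Chap.~XIV, Theorem 2.2]{CVbook}; your argument just unpacks what that citation contains (boundedness in $\mathcal{W}_b^+$ plus compact embeddings on each $[0,M]$, upgrading of precompactness via closedness in $\Theta_{loc}^+$, and the standard contradiction argument for attraction). No gaps.
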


\begin{cor}
\label{statt2}
Let {\bf (W1)}$^*$ and {\bf (W3)} hold and assume $\hvec_0\in L^2_{tb}(0,\infty;\Vdiv')$. Then, for every $0\leq\delta_1,\delta_2<1$
and every $2\leq s<p$ the trajectory attractor $\mathcal{A}_{p,\mathcal{H}_+(\hvec_0)}$ from Theorem \ref{main2}
is compact in $L^2_{loc}([0,\infty);\mathbb{X}_{\delta_1,\delta_2})\cap C([0,\infty);\mathbb{Y}_{\delta_1,\delta_2}^s)$,
bounded in $L^2_{tb}(0,\infty;\mathbb{X}_{\delta_1,\delta_2})\cap C_b([0,\infty);\mathbb{Y}_{\delta_1,\delta_2}^s)$,
and for every
$B\subset\mathcal{K}_{p,\mathcal{H}_+(\hvec_0)}^+$ bounded in $\mathcal{W}_{p,b}^+$
and
every $M>0$ we have, for $t\to+\infty$
\begin{align}
&\mbox{dist}_{L^2(0,M;\mathbb{X}_{\delta_1,\delta_2})}
\Big(\Pi_{[0,M]}T(t)B,\Pi_{[0,M]}\mathcal{A}_{p,\mathcal{H}_+(\hvec_0)}\Big)\to 0,\nonumber\\
&\mbox{dist}_{C([0,M];\mathbb{Y}_{\delta_1,\delta_2}^s)}
\Big(\Pi_{[0,M]}T(t)B,\Pi_{[0,M]}\mathcal{A}_{p,\mathcal{H}_+(\hvec_0)}\Big)\to 0.\nonumber
\end{align}
\end{cor}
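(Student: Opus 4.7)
The strategy is to invoke the abstract enhancement of trajectory attractor convergence in \cite[Chap.~XIV, Thm.~2.2]{CVbook}, which upgrades both the topological nature of the attractor and the attraction property from the weak-local to a strong-local topology, provided a compact embedding between the ``bounded'' trajectory space and the target strong space is available. Theorem~\ref{main2} already yields $\mathcal{A}_{p,\mathcal{H}_+(\hvec_0)}$ as a compact subset of $(\mathcal{W}_{p,loc}^+,\Theta_{p,loc}^+)$ bounded in $\mathcal{W}_{p,b}^+$, and the dissipative estimate \eqref{diss2} forces the tails of any $B\subset\mathcal{K}_{p,\mathcal{H}_+(\hvec_0)}^+$ bounded in $\mathcal{W}_{p,b}^+$ to remain in a fixed bounded subset of $\mathcal{W}_{p,b}^+$ for all large $t$. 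The only structural point to verify is therefore the compact embedding
$$\mathcal{W}_{p,M}\hookrightarrow\hookrightarrow L^2(0,M;\mathbb{X}_{\delta_1,\delta_2})\cap C([0,M];\mathbb{Y}_{\delta_1,\delta_2}^s),\qquad\forall M>0,$$
which is precisely the last of the Aubin--Lions type embeddings listed just above the corollary.

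I would verify this embedding componentwise by Aubin--Lions--Simon. For $\uvec$, using $\Vdiv\hookrightarrow\hookrightarrow H^{\delta_1}\hookrightarrow\Gdiv$ together with $\uvec_t\in L^2(0,M;W^{-1,3/2}(\Omega)^3)$, one obtains compactness in $L^2(0,M;H^{\delta_1}(\Omega)^3)\cap C([0,M];H^{-\delta_1}(\Omega)^3)$. For $\dvec$, the chain $H^2\hookrightarrow\hookrightarrow H^{1+\delta_2}\hookrightarrow L^{3/2}$ gives compactness in $L^2(0,M;H^{1+\delta_2}(\Omega)^3)$, while $\V\hookrightarrow\hookrightarrow H^{\delta_2}\hookrightarrow L^{3/2}$ combined with $\dvec_t\in L^2(0,M;L^{3/2}(\Omega)^3)$ yields compactness in $C([0,M];H^{\delta_2}(\Omega)^3)$ for $\delta_2<1$. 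The $L^s$-part of $\mathbb{Y}_{\delta_1,\delta_2}^s$ is the delicate one and is handled below. The boundedness of $\mathcal{A}_{p,\mathcal{H}_+(\hvec_0)}$ in $L^2_{tb}(0,\infty;\mathbb{X}_{\delta_1,\delta_2})\cap C_b([0,\infty);\mathbb{Y}_{\delta_1,\delta_2}^s)$ is then a direct consequence of the continuous (non-compact) embedding of $\mathcal{W}_{p,b}^+$ into this space.

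For the uniform attraction, I would argue by contradiction: if along some $t_n\to\infty$, $\wvec_n\in B$, and some $M>0$ the distance from $\Pi_{[0,M]}T(t_n)\wvec_n$ to $\Pi_{[0,M]}\mathcal{A}_{p,\mathcal{H}_+(\hvec_0)}$ in either $L^2(0,M;\mathbb{X}_{\delta_1,\delta_2})$ or $C([0,M];\mathbb{Y}_{\delta_1,\delta_2}^s)$ stayed away from zero, the dissipative estimate \eqref{diss2} would place the restricted trajectories $\Pi_{[0,M]}T(t_n)\wvec_n$ in a fixed bounded subset of $\mathcal{W}_{p,M}$, hence in a strongly precompact set by the compact embedding above. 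Extracting a subsequence, the strong limit would coincide with the weak $\Theta_{p,M}$-limit, and by Theorem~\ref{main2} would belong to $\Pi_{[0,M]}\mathcal{A}_{p,\mathcal{H}_+(\hvec_0)}$, a contradiction. The main obstacle here is obtaining the $C([0,M];L^s(\Omega)^3)$-compactness in the $\dvec$ component, since the Lebesgue embedding $L^p\hookrightarrow L^s$ on the bounded domain $\Omega$ is merely continuous, not compact, so the plain Aubin--Lions lemma applied to the pair $(L^p,L^{3/2})$ is insufficient. This is overcome by exploiting the strict inequality $s<p$ together with the additional spatial regularity $\dvec\in L^2(0,M;H^2(\Omega)^3)\hookrightarrow L^2(0,M;L^\infty(\Omega)^3)$ (by the 3D Sobolev embedding), interpolating with $L^\infty(0,M;L^p(\Omega)^3)$ to gain a small amount of compactness in space, and then applying a Simon-type refinement of Aubin--Lions to conclude precompactness in $C([0,M];L^s(\Omega)^3)$.
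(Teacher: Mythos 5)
Your overall strategy is exactly the paper's: the authors simply list the three Aubin--Lions type compact embeddings displayed just before the corollaries and invoke \cite[Chap.~XIV, Theorem~2.2]{CVbook}, and your treatment of the $\uvec$-component, of the $H^{\delta_2}$-part of the $\dvec$-component, and of the attraction property presupposes nothing beyond that. The boundedness statements also follow, as you say, from boundedness in $\mathcal{W}_{p,b}^+$ and continuous embeddings.

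However, the one step you elaborate in detail --- the $C([0,M];L^s(\Omega)^3)$-precompactness of the $\dvec$-component --- is assembled incorrectly. Interpolating $L^2(0,M;L^\infty(\Omega)^3)$ (obtained from $\dvec\in L^2(0,M;H^2(\Omega)^3)$) against $L^\infty(0,M;L^p(\Omega)^3)$ produces spaces $L^r(0,M;L^m(\Omega)^3)$ with $m>p$, i.e.\ spatial exponents \emph{above} $p$, whereas you need $s<p$; moreover neither factor of that interpolation carries any compactness, so none can be ``gained'' from it, and an $L^2$-in-time bound cannot by itself yield precompactness in $C([0,M];\cdot)$. The correct assembly is the following. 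First apply the Aubin--Lions--Simon lemma to the chain $\V\cap L^p(\Omega)^3\hookrightarrow\hookrightarrow L^q(\Omega)^3\hookrightarrow L^{3/2}(\Omega)^3$ for some fixed $2\leq q<6$, where the compactness comes from the $H^1$-part of the $L^\infty$-in-time bound via Rellich, together with $\dvec_t\in L^2(0,M;L^{3/2}(\Omega)^3)$; this gives precompactness in $C([0,M];L^q(\Omega)^3)$. Then, for $2\leq s<p$, choose $\theta\in(0,1]$ with $\tfrac{1}{s}=\tfrac{\theta}{q}+\tfrac{1-\theta}{p}$ and use
\begin{equation*}
\Vert f\Vert_{L^s(\Omega)^3}\leq\Vert f\Vert_{L^q(\Omega)^3}^{\theta}\,\Vert f\Vert_{L^p(\Omega)^3}^{1-\theta},
\end{equation*}
together with the uniform-in-time $L^p$ bound, to upgrade the $C([0,M];L^q)$ convergence to $C([0,M];L^s)$. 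This is where the strict inequality $s<p$ enters --- not through any $L^\infty$ spatial bound. With this local correction the rest of your argument (dissipative estimate, contradiction argument for uniform attraction, identification of the strong limit with the weak $\Theta_{p,M}$-limit) goes through and coincides with the paper's intended proof.
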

In Corollary \ref{statt1} and Corollary \ref{statt2} we have denoted by
$\mbox{dist}_{X}(A,B)$ the Hausdorff semidistance in the
Banach space $X$ between $A,B\subset X$.

Let us now define, for every $B\subset\mathcal{K}_{\mathcal{H}_+(\hvec_0)}^+$
and every $B_p\subset\mathcal{K}_{p,\mathcal{H}_+(\hvec_0)}^+$ , the sections
\begin{align}
& B(t):=\Big\{[\uvec(t),\dvec(t)]:[\uvec,\dvec]\in B \Big\}\subset\mathbb{Y}_{\delta_1,\delta_2},
\qquad t\geq 0,\nonumber\\
& B_p(t):=\Big\{[\uvec(t),\dvec(t)]:[\uvec,\dvec]\in B_p \Big\}\subset\mathbb{Y}_{\delta_1,\delta_2}^s,
\qquad t\geq 0.
\end{align}
Similarly we set
\begin{align}
&
\mathcal{A}_{\mathcal{H}_+(\hvec_0)}(t):=\Big\{[\uvec(t),\dvec(t)]:[\uvec,\dvec]
\in\mathcal{A}_{\mathcal{H}_+(\hvec_0)} \Big\}\subset
\mathbb{Y}_{\delta_1,\delta_2},
\qquad t\geq 0,\nonumber\\
&
\mathcal{K}_{Z(\hvec_0)}(t):=\Big\{[\uvec(t),\dvec(t)]:[\uvec,\dvec]\in \mathcal{K}_{Z(\hvec_0)} \Big\}\subset
\mathbb{Y}_{\delta_1,\delta_2},
\qquad t\in\mathbb{R},\nonumber\\
&
\mathcal{A}_{p,\mathcal{H}_+(\hvec_0)}(t):=\Big\{[\uvec(t),\dvec(t)]:[\uvec,\dvec]
\in\mathcal{A}_{p,\mathcal{H}_+(\hvec_0)} \Big\}\subset
\mathbb{Y}_{\delta_1,\delta_2}^s,
\qquad t\geq 0,\nonumber\\
&
\mathcal{K}_{p,Z(\hvec_0)}(t):=\Big\{[\uvec(t),\dvec(t)]:[\uvec,\dvec]\in \mathcal{K}_{p,Z(\hvec_0)} \Big\}\subset
\mathbb{Y}_{\delta_1,\delta_2}^s,
\qquad t\in\mathbb{R}.\nonumber
\end{align}
Then, as a further consequence of Theorem \ref{main} and Theorem \ref{main2} we have
(see \cite[Chap. XIV, Definition 2.6, Corollary 2.2]{CVbook}) the following two corollaries
\begin{cor}
Let {\bf (W1)} and {\bf (W2)} hold and assume that
$\hvec_0$ is tr.-c. in $L^2_{loc}([0,\infty);\Vdiv')$ or
$\hvec_0\in L^2_{tb}(0,\infty;\Gdiv)$.
 Then the bounded subset
$$\mathcal{A}_{gl}:=\mathcal{A}_{\mathcal{H}_+(\hvec_0)}(0)=\mathcal{K}_{Z(\hvec_0)}(0)$$
is the uniform (w.r.t. $\hvec\in\mathcal{H}_+(\hvec_0)$) global attractor in $\mathbb{Y}_{\delta_1,\delta_2}$,
$0\leq\delta_1,\delta_2< 1$, of system \eqref{sy1}--\eqref{sy5}, namely
(i) $\mathcal{A}_{gl}$ is compact in $\mathbb{Y}_{\delta_1,\delta_2}$,
(ii) $\mathcal{A}_{gl}$ satisfies the attracting property
$$\mbox{dist}_{\mathbb{Y}_{\delta_1,\delta_2}}(B(t),\mathcal{A}_{gl})\to 0,
\qquad t\to+\infty,$$
for every $B\in\mathcal{B}_{\mathcal{H}_+(\hvec_0)}^+$, and (iii) $\mathcal{A}_{gl}$
is the minimal set satisfying (i) and (ii).
\end{cor}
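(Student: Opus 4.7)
The plan is to deduce the three properties from Corollary~\ref{statt1} and from the kernel representation $\mathcal{A}_{\mathcal{H}_+(\hvec_0)} = \Pi_+ \mathcal{K}_{Z(\hvec_0)}$ proved just above, by repeatedly exploiting the continuity of the evaluation map $e_0 : C([0,M];\mathbb{Y}_{\delta_1,\delta_2}) \to \mathbb{Y}_{\delta_1,\delta_2}$, $e_0(w) := w(0)$, for any fixed $M>0$. The identification $\mathcal{A}_{\mathcal{H}_+(\hvec_0)}(0)=\mathcal{K}_{Z(\hvec_0)}(0)$ is an immediate consequence of that representation.

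For property (i), I would invoke the compactness of $\mathcal{A}_{\mathcal{H}_+(\hvec_0)}$ in $C([0,\infty);\mathbb{Y}_{\delta_1,\delta_2})$ supplied by Corollary~\ref{statt1}; since $e_0$ is continuous and $\mathcal{A}_{gl}=e_0(\mathcal{A}_{\mathcal{H}_+(\hvec_0)})$, both compactness and boundedness in $\mathbb{Y}_{\delta_1,\delta_2}$ follow at once. For property (ii), the key identity is $B(t)=(T(t)B)(0)=e_0(\Pi_{[0,1]}T(t)B)$ for every $B\in\mathcal{B}_{\mathcal{H}_+(\hvec_0)}^+$, which yields the pointwise bound
\begin{align*}
\mbox{dist}_{\mathbb{Y}_{\delta_1,\delta_2}}(B(t),\mathcal{A}_{gl})
\leq\mbox{dist}_{C([0,1];\mathbb{Y}_{\delta_1,\delta_2})}\bigl(\Pi_{[0,1]}T(t)B,\Pi_{[0,1]}\mathcal{A}_{\mathcal{H}_+(\hvec_0)}\bigr),
\end{align*}
whose right-hand side vanishes as $t\to+\infty$ by Corollary~\ref{statt1} with $M=1$.

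The core work is in property (iii). The idea is to exhibit, for each $y\in\mathcal{A}_{gl}$, a set $B_y\in\mathcal{B}^+_{\mathcal{H}_+(\hvec_0)}$ whose sections at an unbounded sequence of times all contain $y$, and then compare with any candidate attracting $P$. Using $\mathcal{A}_{gl}=\mathcal{K}_{Z(\hvec_0)}(0)$ I would fix a complete trajectory $w:\mathbb{R}\to\Gdiv\times\V$ in the kernel with $w(0)=y$, driven by some complete symbol $\zetavec\in Z(\hvec_0)$. For $n\in\mathbb{N}$ the shifted restriction $\tilde{w}_n:=\Pi_+T(-n)w$ is a weak solution on $[0,\infty)$ with external force $\Pi_+T(-n)\zetavec\in\omega(\mathcal{H}_+(\hvec_0))\subset\mathcal{H}_+(\hvec_0)$, while the family $B_y:=\{\tilde{w}_n\}_{n\ge 1}$ is uniformly bounded in $\mathcal{W}_b^+$ because it sits inside the kernel $\mathcal{K}_{Z(\hvec_0)}$ which is bounded in $\mathcal{W}_b$. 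By construction $\tilde{w}_n(n)=w(0)=y$, so if $P$ is any compact attracting set then
\begin{align*}
\mbox{dist}_{\mathbb{Y}_{\delta_1,\delta_2}}(\{y\},P)\leq\mbox{dist}_{\mathbb{Y}_{\delta_1,\delta_2}}(B_y(n),P)\to 0,
\end{align*}
and the closedness of $P$ forces $y\in P$.

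The most delicate technical point will be to certify that translates of a kernel trajectory genuinely belong to $\mathcal{K}^+_{\mathcal{H}_+(\hvec_0)}$ with uniform control in $\mathcal{W}_b^+$; this rests on the very definition of complete symbol and on the strict invariance of $\omega(\mathcal{H}_+(\hvec_0))$ under the translation semigroup. Once this is accepted the rest of the argument is a direct transcription to our setting of the classical Chepyzhov--Vishik passage from the trajectory attractor to the global attractor at time zero (cf.\ \cite[Chap.~XIV, Cor.~2.2]{CVbook}).
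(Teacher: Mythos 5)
Your argument is correct and follows essentially the same route as the paper, which proves this corollary simply by invoking \cite[Chap.~XIV, Definition~2.6, Corollary~2.2]{CVbook} together with Corollary~\ref{statt1} and the kernel representation $\mathcal{A}_{\mathcal{H}_+(\hvec_0)}=\Pi_+\mathcal{K}_{Z(\hvec_0)}$. Your proof is precisely the standard Chepyzhov--Vishik passage to sections at $t=0$ that this citation encapsulates (the continuous evaluation map for (i)--(ii), and backward-translated kernel trajectories, which lie in $\mathcal{K}^+_{\mathcal{H}_+(\hvec_0)}$ and are uniformly bounded in $\mathcal{W}_b^+$ by the boundedness of $\mathcal{K}_{Z(\hvec_0)}$ in $\mathcal{W}_b$, for the minimality in (iii)); I see no gaps.
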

\begin{cor}
Let {\bf (W1)$^*$} and {\bf (W3)} hold and assume
that $\hvec_0$ is tr.-c. in $L^2_{loc}([0,\infty);\Vdiv')$ or
$\hvec_0\in L^2_{tb}(0,\infty;\Gdiv)$.
 Then the bounded subset
$$\mathcal{A}_{p,gl}:=\mathcal{A}_{p,\mathcal{H}_+(\hvec_0)}(0)=\mathcal{K}_{p,Z(\hvec_0)}(0)$$
is the uniform (w.r.t. $\hvec\in\mathcal{H}_+(\hvec_0)$) global attractor in $\mathbb{Y}_{\delta_1,\delta_2}^s$,
$0\leq\delta_1,\delta_2< 1$ and $2\leq s<p$, of system \eqref{sy1}--\eqref{sy5}, namely
(i) $\mathcal{A}_{p,gl}$ is compact in $\mathbb{Y}_{\delta_1,\delta_2}^s$,
(ii) $\mathcal{A}_{p,gl}$ satisfies the attracting property
$$\mbox{dist}_{\mathbb{Y}_{\delta_1,\delta_2}^s}(B_p(t),\mathcal{A}_{p,gl})\to 0,
\qquad t\to+\infty,$$
for every $B_p\subset\mathcal{K}_{p,\mathcal{H}_+(\hvec_0)}^+$ bounded in $\mathcal{W}_{p,b}^+$, and (iii) $\mathcal{A}_{p,gl}$
is the minimal set satisfying (i) and (ii).
\end{cor}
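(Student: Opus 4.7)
The plan is to obtain $\mathcal{A}_{p,gl}$ as the time-zero section of the trajectory attractor $\mathcal{A}_{p,\mathcal{H}_+(\hvec_0)}$ produced by Theorem \ref{main2}, in direct analogy with how Corollary \ref{statt2} upgrades the local topology on compact time intervals to the sharper spaces $C([0,M];\mathbb{Y}_{\delta_1,\delta_2}^s)$. First I would invoke the kernel representation $\mathcal{A}_{p,\mathcal{H}_+(\hvec_0)}=\Pi_+\mathcal{K}_{p,Z(\hvec_0)}$ established earlier in this section, which is available because the closure hypotheses on $\hvec_0$ supply the $(\Theta_{p,loc}^+,\mathcal{H}_+(\hvec_0))-$closedness of the trajectory space required by Theorem \ref{main2}: evaluating both sides at $t=0$ yields $\mathcal{A}_{p,gl}=\mathcal{K}_{p,Z(\hvec_0)}(0)$ at once, since $\Pi_+$ does not affect the value at the origin. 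Boundedness of $\mathcal{A}_{p,gl}$ in $\mathbb{Y}_{\delta_1,\delta_2}^s$ is then inherited from the boundedness of $\mathcal{A}_{p,\mathcal{H}_+(\hvec_0)}$ in $\mathcal{W}_{p,b}^+$ through the continuous embedding of $\mathcal{W}_{p,b}^+$ into $C_b([0,\infty);\mathbb{Y}_{\delta_1,\delta_2}^s)$.

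For assertion (i), compactness of $\mathcal{A}_{p,gl}$ in $\mathbb{Y}_{\delta_1,\delta_2}^s$ is an immediate consequence of Corollary \ref{statt2}, since $\mathcal{A}_{p,\mathcal{H}_+(\hvec_0)}$ is compact in $C([0,1];\mathbb{Y}_{\delta_1,\delta_2}^s)$ and the evaluation map $\wvec\mapsto\wvec(0)$ is continuous from $C([0,1];\mathbb{Y}_{\delta_1,\delta_2}^s)$ into $\mathbb{Y}_{\delta_1,\delta_2}^s$. For assertion (ii), I specialize Corollary \ref{statt2} to $M=1$: for every $B_p\subset\mathcal{K}_{p,\mathcal{H}_+(\hvec_0)}^+$ bounded in $\mathcal{W}_{p,b}^+$ we have
$$\mbox{dist}_{C([0,1];\mathbb{Y}_{\delta_1,\delta_2}^s)}\Big(\Pi_{[0,1]}T(t)B_p,\Pi_{[0,1]}\mathcal{A}_{p,\mathcal{H}_+(\hvec_0)}\Big)\to 0,$$
and composing with evaluation at $\tau=0$, together with the identity $(T(t)\wvec)(0)=\wvec(t)$, gives $\mbox{dist}_{\mathbb{Y}_{\delta_1,\delta_2}^s}(B_p(t),\mathcal{A}_{p,gl})\to 0$ as $t\to+\infty$, which is exactly the required attraction property.

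The minimality claim (iii) is the point I expect to require the most care. I would argue by contradiction: given any compact $P\subset\mathbb{Y}_{\delta_1,\delta_2}^s$ satisfying the sectionwise attraction property (ii), lift it to the set of trajectories $\widetilde{P}:=\{\wvec\in\mathcal{W}_{p,loc}^+\colon\wvec(t)\in P\mbox{ for all }t\geq 0\}$, intersect with the uniformly absorbing ball supplied by Proposition \ref{dissipativeest2}, and verify that the resulting set is compact in $\Theta_{p,loc}^+$ and uniformly attracting for the family $\{\mathcal{K}_{p,\hvec}^+\}_{\hvec\in\mathcal{H}_+(\hvec_0)}$; the minimality of $\mathcal{A}_{p,\mathcal{H}_+(\hvec_0)}$ granted by Theorem \ref{trajattract} then forces $\mathcal{A}_{p,\mathcal{H}_+(\hvec_0)}\subset\widetilde{P}$, and taking sections at $t=0$ produces $\mathcal{A}_{p,gl}\subset P$. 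This reproduces the abstract scheme of \cite[Chap. XIV, Cor. 2.2]{CVbook}, and the only role of the extra closure assumption on $\hvec_0$ is to make the kernel representation available in the opening step.
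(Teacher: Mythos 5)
Your proposal is correct and follows the route the paper itself takes, namely deducing the corollary from the kernel representation $\mathcal{A}_{p,\mathcal{H}_+(\hvec_0)}=\Pi_+\mathcal{K}_{p,Z(\hvec_0)}$, the strong compactness and attraction of Corollary \ref{statt2}, and the abstract scheme of \cite[Chap.~XIV, Cor.~2.2]{CVbook}; the paper does not write the argument out, so your text is essentially a faithful unpacking of that citation, and parts (i) and (ii) (evaluation at $\tau=0$ composed with the $C([0,1];\mathbb{Y}_{\delta_1,\delta_2}^s)$ statements) are exactly as intended.

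The one place where you genuinely deviate is the minimality step (iii). The standard argument is shorter and avoids any lifting: since the closure hypotheses give $\mathcal{A}_{p,\mathcal{H}_+(\hvec_0)}\subset\mathcal{K}^+_{p,\mathcal{H}_+(\hvec_0)}$ and the attractor is bounded in $\mathcal{W}_{p,b}^+$, it is itself an admissible set $B_p$ in (ii); strict invariance gives $\mathcal{A}_{p,\mathcal{H}_+(\hvec_0)}(t)=(T(t)\mathcal{A}_{p,\mathcal{H}_+(\hvec_0)})(0)=\mathcal{A}_{p,gl}$ for all $t\geq 0$, so any compact $P$ satisfying (ii) has $\mbox{dist}_{\mathbb{Y}_{\delta_1,\delta_2}^s}(\mathcal{A}_{p,gl},P)=0$, i.e.\ $\mathcal{A}_{p,gl}\subset P$. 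Your lift $\widetilde{P}$ can be made to work, but the assertion that $\widetilde{P}$ intersected with the absorbing ball is uniformly attracting in $\Theta_{p,loc}^+$ is not automatic from sectionwise attraction of $P$: a trajectory all of whose sections are close to $P$ need not be close, in $\Theta_{p,loc}^+$, to a trajectory lying entirely in $P$. One has to run a compactness/contradiction argument (extract a $\Theta_{p,loc}^+$-convergent subsequence inside the compact absorbing ball, upgrade to convergence of sections via the Aubin--Lions embeddings, and use closedness of $P$ to place the limit in $\widetilde{P}$). This is doable but is real work that your sketch leaves implicit, and it also buys you nothing over the direct strict-invariance argument; note finally that the closure assumption on $\hvec_0$ is used not only for the kernel representation but also, in the direct argument, to guarantee that the trajectory attractor is itself a subset of $\mathcal{K}^+_{p,\mathcal{H}_+(\hvec_0)}$ and hence an admissible competitor in (ii).
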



\begin{thebibliography}{30}
\bibitem{Ba} J.M. Ball,  Continuity properties and global
    attractors of generalized semiflows and the Navier-Stokes
    equation, J. Nonlinear Sci. \textbf{7} (1997), 475--502 (Erratum, J.
    Nonlinear Sci. \textbf{8} (1998), 233).

\bibitem{CMR} T. Caraballo, P. Mar\'{i}n-Rubio, J.C. Robinson,
A comparision between two theories for multi-valued semiflows and their asymptotic behavior,
Set Valued Anal., \textbf{11} (2003), 297--322.

\bibitem{CR} C. Cavaterra, E. Rocca,  On a 3D isothermal model for nematic liquid crystals accounting
for stretching terms, in press on Z. Angew. Math. Phys. DOI 10.1007/s00033-012-0219-7,
preprint  arXiv:1107.3947v2 (2012), 1--17.


\bibitem{CVbook} V.V. Chepyzhov, M. Vishik,  Attractors for
    Equations of Mathematical Physics, Amer. Math. Soc. Colloq. Publ.,
    vol. \textbf{49}, American Mathematical Society, Providence, RI, 2002.

\bibitem{CV} V.V. Chepyzhov, M. Vishik,  Evolution equations and their trajectory attractors,
    J. Math. Pures Appl., \textbf{76} (1997), 913--964.

\bibitem{CF} A. Cheskidov, C. Foias, On global attractors of the 3D-Navier-Stokes equations,
J. Differential Equations, \textbf{231} (2006), 714--754.

    \bibitem{CoutShkoller}  D. Coutand, S. Shkoller, Well posedness of the full
Ericksen-Leslie model of nematic liquid crystals, C.R. Acad. Sci.
Paris. S{\' e}r. I, {\bf 333} (2001), 919--924.

\bibitem{CU} N.J. Cutland, Global attractors for small samples and germs of 3D
Navier-Stokes equations, Nonlinear Anal., \textbf{62} (2005), 265--281.

 \bibitem{Er} J.L. Ericksen, Equilibrium theory of liquid crystals,
Advances in liquid crystals, Vol. 26 Brown (ed.) Academic press, New
York, 1976, 233--398.

\bibitem{FFRS} E. Feireisl, M. Fr\'emond, E. Rocca, G. Schimperna,
A new approach to non-isothermal models
for nematic liquid crystals, to appear on Arch. Ration. Mech. Anal., preprint arXiv:1104.1339v1 (2011), 1--21.

\bibitem{FS} F. Flandoli, B. Schmalfuss, Weak solutions and attractors
for the 3-dimensional Navier-Stokes equations with nonregular force, J.
Dynam. Differential Equations, \textbf{11} (1999), 355--398.

\bibitem{FG} S. Frigeri, M. Grasselli,  Global and trajectory attractors
for a nonlocal Cahn-Hilliard-Navier-Stokes system, preprint arXiv:1107.5933,
 2011, 1--36.

 \bibitem{GrasWu} M. Grasselli, H. Wu, Finite-dimensional global attractor for a system modeling the 2D nematic liquid crystal flow,
Z. Angew. Math. Phys., {\bf 62} (2011), 979--992.

\bibitem{KV} A.V. Kapustyan, J. Valero, Weak and strong attractors for the
3D Navier-Stokes system,
J. Differential Equations, \textbf{240} (2007), 249--278.

\bibitem{Le78} F. Leslie, Theory of flow phenomena in liquid crystals,
Advances in Liquid Crystals, Vol. 4, Brown G. (Ed), A.P. New York,
1978, 1--81.

\bibitem{LinLiusimply}
F.-H. Lin, C. Liu, Nonparabolic dissipative systems modeling the
flow of liquid crystals, Comm. Pure Appl. Math., {\bf 48} (1995),
 501--537.

\bibitem{R} R.M.S. Rosa, Asymptotic regularity conditions for the strong
convergence towards weak limit sets and weak attractors
of the 3D Navier-Stokes equations, J. Differential Equations, \textbf{229} (2006), 257--269.


 \bibitem{LS}
C. Liu, J. Shen, On liquid crystal flows with free-slip boundary
conditions, Discrete Contin. Dynam. Systems,  {\bf 7} (2001), 307--318.

\bibitem{M} V. Melnik, Multivalued semiflows and their attractors (Russian), Dokl. Akad. Nauk., \textbf{343}
(1995), 302-305.

\bibitem{MV} V. Melnik, J. Valero, On attractors of multi-valued semi-flows and differential inclusions,
Set Valued Anal., \textbf{6} (1998), 83--111.

\bibitem{prslong} H.  Petzeltov\'a, E. Rocca, G. Schimperna, On the long-time behavior of some mathematical models
for nematic liquid crystals, in press on Calc. Var.,  DOI 10.1007/s00526-012-0496-1, preprint arXiv:0901.1751v2 (2011), 1--19.

\bibitem{RS} E.~Rocca, G.~Schimperna, Universal attractor for some singular phase transition systems,
 Phys.\ D,
 {\bf 192} (2004),
 279--307.

\bibitem{Se} G.R. Sell, Global attractors for the three-dimensional
Navier-Stokes equations, J. Dynam. Differential Equations, \textbf{8} (1996), 1--33.



\bibitem{SL}
  H. Sun, C. Liu, On energetic variational approaches in modeling
  the nematic liquid crystal flows, Discrete Contin. Dynam. Systems,  {\bf 23} (2009), 455--475.

\bibitem{wuxuliu}
H. Wu, X. Xu, C. Liu,
Asymptotic behavior for a nematic liquid crystal model with
different Kkinematic transport properties, preprint arXiv:0901.1751v2  (2010), 1--26.

\end{thebibliography}
\end{document}